\newtheorem{Theorem}{Theorem}[section]
\newtheorem{lemma}[Theorem]{Lemma}
\theoremstyle{definition}
\newtheorem{remark}[Theorem]{Remark}
\DeclareMathOperator*\prox{prox}
\newcommand{\R}{{\mathbb R}}
\newcommand{\N}{{\mathbb N}}
\DeclareMathOperator{\argmin}{argmin}
\newcommand{\interior}{{\rm int}\kern 0.06em}
\newcommand{\inte}{{\rm int}\kern 0.06em}
\newcommand{\cl}{{\rm cl}\kern 0.06em}
\newcommand{\zer}{{\rm zer}\kern 0.06em}
\newcommand{\gph}{{\rm gph}\kern 0.06em}
\newcommand{\dom}{{\rm dom}\kern 0.06em}
\newcommand{\pr}{{\rm pr}\kern 0.06em}
\newcommand{\e}{\varepsilon}
\newcommand{\p}{{\partial}}
\newcommand{\To}{\longrightarrow}
\def\a{\alpha}
\def\ox{\overline{x}}
\def\b{\beta}
\def\d{\delta}
\def\g{\gamma}
\def\m{\mu}
\def\s{\sigma}
\def\l{\lambda}
\def\<{\langle}
\def\>{\rangle}
\newcommand{\cH}{{\mathcal H}}
\newcommand{\ds}{\displaystyle}
\renewcommand*{\backrefalt}[4]{%
\ifcase #1 %
(Not cited)%
\or
(Cited on p.~#2)%
\else
(Cited on pp.~#2)%
\fi
}
\title{On the convergence of an inertial proximal algorithm with a Tikhonov regularization term }
\author{Szil\'ard Csaba L\'aszl\'o  }
\thanks{Corresponding Author: Szil\'ard Csaba L\'aszl\'o}
\thanks{Affiliation: Technical University of Cluj-Napoca, Department of Mathematics, Str. Memorandumului nr. 28, 400114 Cluj-Napoca, Romania}
\thanks{ e-mail: szilard.laszlo@math.utcluj.ro}
\thanks{This work was supported by a grant of the Ministry of Research, Innovation and Digitization, CNCS -
UEFISCDI, project number PN-III-P1-1.1-TE-2021-0138, within PNCDI III}
\begin{document}

\begin{abstract}
This paper deals with an inertial proximal algorithm that contains a Tikhonov regularization term, in connection  to the minimization problem of a convex lower semicontinuous function $f$.  We show that for appropriate Tikhonov regularization parameters the  value of the objective function in the sequences  generated by our algorithm converges fast (with arbitrary rate) to the global minimum of the objective function and the  generated sequences converges weakly  to a minimizer of the objective function. We also  obtain the fast convergence of subgradients and the discrete velocities towards zero and some sum estimates. Further, we obtain strong convergence results for the generated sequences and also fast convergence for the function values  and discrete velocities for the same constellation of the parameters involved. Our analysis reveals that the extrapolation coefficient, the stepsize and the Tikhonov regularization coefficient are strongly correlated and there is a critical setting of the parameters that separates the cases when strong convergence results or weak convergence results can be obtained.
\end{abstract}
\maketitle

\noindent \textbf{Key Words.}  convex optimization,  inertial proximal algorithm, Tikhonov regularization, strong convergence, convergence rate \vspace{1ex}

\noindent \textbf{AMS subject classification.}  46N10, 65K05,  65K10, 90C25, 90C30

\markboth{S.C. L\'ASZL\'O}{PROXIMAL ALGORITHM WITH TIKHONOV REGULARIZATION}

\section{Introduction}
\subsection{The state of the art in continuous case} The strong convergence of the trajectories  of  second order continuous dynamical systems with a Tikhonov regularization term to a minimizer of minimum norm of a smooth convex objective function were the subject of many recent investigations, (see \cite{AL-siopt,ABCR,ACR,ACR2,AL-nemkoz,BCL,BCLstr,L-jde,L-amop}). These dynamical systems lead via explicit/implicit discretizations  to inertial algorithms with a Tikhonov regularization term therefore the asymptotical behaviour of the generated trajectories give an insight into the behaviour of the sequences generated by the inertial algorithms obtained via discretization from these dynamical systems. However, until recently it was thought that for a setting of the parameters involved one can obtain fast convergence of the function values in a generated trajectory to the minimum of the objective function and (eventually) weak convergence of the trajectory to a minimizer of the objective function, meanwhile for another setting of the parameters one can obtain strong convergence results of the generated trajectories only (without fast rates for the function values). Even more the strong convergence results obtained were  in the form $\liminf_{t\to+\infty}\|x(t)-x^*\|=0$ where $x$ is a trajectory of the dynamical system and $x^*$ is the minimal norm minimizer of the objective function. Indeed, in \cite{ACR}  the authors associated to the optimization problem $\min_{x\in\mathcal{H}} f(x)$  the  dynamical system
\begin{align}\label{DynSysACR}
&\ddot{x}(t) + \frac{\a}{t} \dot{x}(t) +\nabla f\left(x(t)\right) +\e(t)x(t)=0,\,
x(t_0) = u_0, \,
\dot{x}(t_0) = v_0,
\end{align}
where $t\ge t_0 > 0,\a\ge 3$, $(u_0,v_0) \in \mathcal{H} \times \mathcal{H}$ and the Tikhonov regularization parameter $\e(t)$ is a nonincreasing positive function satisfying $\lim_{t\to+\infty}\e(t)=0.$ Here $\mathcal{H}$ is a real Hilbert space and the objective function $f:\mathcal{H}\to\R$ is  smooth and convex. Now, if the Tikhonov regularization parameter is $\epsilon (t) = \frac{c}{t^p},\,c>0$,   $ p>0 $ and $\alpha > 3$ then according to \cite{ACR} the following statements hold.
When  $ p>2 $ one has $ f\left(x(t)\right)-\min f = o (t^{-2})$, $\|\dot{x}(t)\|= o (t^{-1})$ as $t\to+\infty$
and  $x(t)$ converges weakly to a minimizer of $f$. Further,  in the case   $1 < p<2$ or $p=2$ and $c > \frac{2}{9}\alpha(\alpha -3)$
one has
  $\liminf_{t\rightarrow\infty} \Vert x(t)-x^{*}\Vert =0,$
where $x^{*}$ is the element of minimum norm of $\argmin f$.

One can observe that the case $p=2$ is critical in the sense that separates the case when fast convergence of the function values and weak convergence of the trajectories are obtained and when strong convergence results for the trajectories hold. Some similar results were obtained in \cite{BCL} for a second order dynamical system with Hessian driven damping.

The first breakthrough was made in \cite{AL-nemkoz} where the authors succeeded to obtain both fast convergence towards the minimal value of $f$ and the strong convergence of the generated trajectories towards the element of minimum norm of $\argmin f$. More precisely, in \cite{AL-nemkoz} it is shown that if $\e(t)=\frac{c}{t^2}$ and $\a>3$ in \eqref{DynSysACR} then $ f\left(x(t)\right)-\min f = \mathcal{O} (t^{-2})$, $\|\dot{x}(t)\|= \mathcal{O} (t^{-1})$ as $t\to+\infty$, and there is strong convergence to the minimum norm solution $\liminf_{t\rightarrow\infty} \Vert x(t)-x^{*}\Vert =0$. A similar result has been obtained in \cite{AL-siopt} for a second order dynamical system with implicit Hessian driven damping.
These results have been improved in \cite{ABCR} where the authors studied the dynamical system introduced in \cite{AL-nemkoz}, that is
\begin{align}\label{DynSysABCR}
\ddot{x}(t) + \d\sqrt{\e(t)} \dot{x}(t) +\nabla f\left(x(t)\right) +\e(t)x(t)=0,\,x(t_0) = u_0, \,
\dot{x}(t_0) = v_0,\,t\ge t_0>0,\,\d>0
\end{align}
in connection to the minimization problem  with a smooth convex objective function $f$. They have shown that in case the Tikhonov regularization parameter has the form $\e(t)=\frac{1}{t^p}$ with  $p<2$ then $ f\left(x(t)\right)-\min f = \mathcal{O} (t^{-p})$, $\|\dot{x}(t)\|= \mathcal{O} (t^{-\frac{p+2}{4}})$ as $t\to+\infty$, and there  is 'full' strong convergence to the minimum norm solution, that is, $\lim_{t\rightarrow\infty} \Vert x(t)-x^{*}\Vert =0$. Some similar results have been obtained in \cite{ABCRamop} for a dynamical system with a Hessian driven damping.

 The results from \cite{ABCR} have  further been extended and improved in \cite{L-jde}. Indeed, \cite{L-jde}   deals with  the dynamical system
\begin{align}\label{DynSys}
&\ddot{x}(t) + \frac{\a}{t^q} \dot{x}(t) +\nabla f\left(x(t)\right) +\frac{c}{t^p}x(t)=0,\,
x(t_0) = u_0, \,
\dot{x}(t_0) = v_0,
\end{align}
where $t_0 > 0$, $0<q<1$ and $(u_0,v_0) \in \mathcal{H} \times \mathcal{H}$. Note that for the Tikhonov regularization parameter $\e(t)=\frac{c}{t^p}$ the dynamical system \eqref{DynSysABCR} is a special case of \eqref{DynSys} obtained for $q=\frac{p}{2}.$ However, according to \cite{L-jde}, $p=2q$ is not the best choice since, for instance, for a fixed $q$ one may have the convergence rate
$f(x(t))-\min f=O\left(t^{-\frac{4q+2}{3}}\right)$, which is obviously better than the rate $ f\left(x(t)\right)-\min f = \mathcal{O} (t^{-2q})$ obtained in \cite{ABCR}. Further, the choice of the inertial parameter and Tikhonov regularization parameter in  \eqref{DynSys} allow to make a comprehensive study on how these parameters are correlated. More precisely, for $0<p<q+1$ one obtains strong convergence of the trajectories to the minimal norm solution and fast convergence rates for the decay $f(x(t))-\min f$, for $q+1<p\le 2$ the trajectories converge weakly to a minimizer of $f$ and the fast convergence rates for the decay $f(x(t))-\min f$ is provided. The case $p=q+1$ is critical in the sense that separates the cases when weak and strong convergence of the trajectories can be obtained, however also in this case fast convergence rates for the decay $f(x(t))-\min f$ hold.

\subsection{The problem formulation, motivation and a model result}

As we have seen the study of second order continuous dynamical systems with a Tikhonov regularization term in connection to the minimization problem of a smooth convex function has a rich literature. In contrast in the discrete case, the case of inertial algorithms with a Tikhonov regularization term,  there are no  results similar to those emphasized before. The aim of this paper to reduce the gap between the continuous and discrete case.
 To this purpose we introduce a proximal inertial algorithm which, for constant stepsize, can be seen as an implicit discretization of the dynamical system \eqref{DynSys}. However, in our algorithm we do not assume that the objective function is smooth and we consider a variable stepsize parameter. As it was expected, the most important features of a trajectory generated by the dynamical system \eqref{DynSys} are inherited by the sequences generated by our algorithm.  This underlines again the importance of the study of the continuous case (see \cite{ACR1,CPS}), when one ought to design an optimization algorithm  with desirable properties.
 
 Consider the minimization problem
$$\mbox{(P)}\,\,\,\inf_{x\in\cH}f(x),$$
where $\mathcal{H}$ be a Hilbert space endowed with the scalar product $\< \cdot,\cdot\>$ and norm $\|\cdot\|$ and
$f:\mathcal{H}\To\overline{\R}=\R\cup\{+\infty\}$ is  a convex proper lower semicontinuous  function whose solution set $\argmin f$ is  nonempty.
We associate to (P) the following inertial proximal algorithm: for all $k\ge 1$
\begin{align}\label{algo}\tag{PIATR}
\begin{cases}
x_0,x_{1}\in\mathcal{H}\\
y_k= x_k+\a_k(x_k -  x_{k-1})\\
x_{k+1}={ \rm prox}_{\l_k f}\left( y_k - c_k x_k\right),
\end{cases}
\end{align}
where  $\a_k=1-\frac{\a}{k^q},\,\a>0,\,0<q\le 1$ is the inertial parameter, $c_k=\frac{c}{k^p},\,c,\,p>0$ is the Tikhonov regularization parameter and we assume that the stepsize has the form $\l_k=\l k^\d,\,\l>0,\,\d\in\R,$ for all $k\ge 1$. Further, $\prox\nolimits_{s  f} : {\cH} \rightarrow {\cH}, \quad \prox\nolimits_{s f}(x)=\argmin_{y\in {\cH}}\left(f(y)+\frac{1}{2s}\|y-x\|^2\right),$ denotes the proximal point operator of the convex function $s  f$. The name of the algorithm stands for Proximal Inertial Algorithm with Tikhonov Regularization. 

The forms of the extrapolation parameter $\a_k,$ Tikhonov regularization parameter $c_k$ and the stepsize $\l_k$ are motivated by the fact that for these forms one can easily see for which constellation of the parameters the weak convergence and the strong convergence of the  sequences generated by algorithm \eqref{algo} can be obtained. Indeed, our analysis reveals that the inertial coefficient $\a_k$ the stepsize $\l_k$ and the Tikhonov regularization coefficient $c_k$ are strongly correlated, there is a setting of the parameters when weak convergence of the sequences generated by \eqref{algo} to a minimizer of the objective function $f$ can be shown and also fast convergence rates for the function values in the generated sequences to the global minimum of the objective function and fast convergence of the discrete velocity to zero can be obtained. For another constellation of the parameters involved one can obtain strong convergence results of the generated sequences to the minimum norm minimizer of $f$, but also rapid convergence rates for  the function values and discrete velocity. Further, there is a setting of the parameters that separates the case when weak convergence  and the case when strong convergence of the sequences generated by algorithm \eqref{algo} can be obtained.
We emphasize that  the form of inertial parameter $\a_k$ was inspired by the famous FISTA method \cite{BT}, where the inertial parameter has the form $1-\frac{\a}{k},$ see also \cite{AAD,AAD1,Nest1}. Even more, if we take $q=1$ in our inertial parameter, then algorithm \eqref{algo} can be seen as a perturbed version of the inertial proximal algorithm (IPA) studied in \cite{ACR1} (see also \cite{ABotCest,ACPR,APmathpr}). Nevertheless, since $c_k$ is a positive, nonincreasing sequence that goes to $0$ as $k\to+\infty$, the perturbation term  $c_k x_k$  in algorithm \eqref{algo} is actually a Tikhonov regularization term, which may assure the strong convergence of a generated sequence to the minimizer of minimal norm of the objective function $f$. For a better insight on Tikhonov regularization techniques we refer to \cite{AL-siopt,abc2,ACR,att-com1996,AC,AL-nemkoz,BCL,BGMS,CPS,JM-Tikh,L-jde,Tikh,TA}. As it was expected, in case $q=1$ for some settings of the Tikhonov regularization parameter $c_k$ we reobtain the results from \cite{ACR1} (see Theorem \ref{convergencealgorithm}), however according to Remark \ref{comACR1}, just as in continuous case, the best choice of $q$ in the inertial parameter $\a_k$ is not $q=1$, but rather $0<q<1,$ since in this case improved rates can be obtained.
 
 Though the explicit form of the proximal point operator is sometimes hard to be computed the proximal point algorithms  are the basic tools for solving nonsmooth convex optimization problems. This is due to the fact that when one deals with an optimization problem having in its objective a convex lower semicontinuous function with extended real values, then the objective function is not differentiable and therefore the simple gradient type methods (see for instance \cite{Nest1}) are not allowed. Of course by rewriting $\prox\nolimits_{s f}$ as the resolvent operator of the subdifferential of the convex function $sf$, that is $\prox\nolimits_{s f}(x)=(I+s\p f)^{-1}(x),$ algorithm \eqref{algo} can be reformulated as the subdifferential inclusion
\begin{equation}\label{formx}\tag{SDI}
x_{k+1}\in\a_k(x_{k}-x_{k-1})-\l_k \p f(x_{k+1}) +\left(1- c_k\right) x_k,\mbox{ for all }k\ge 1,
\end{equation}
but this formulation  is not suitable for implementation since the subdifferential of the objective function, namely $\p f$, usually cannot be computed. Consequently, the single general tool that one can use in this instance is the proximal point operator, and this is underlined by the fact that a rich literature has been devoted to proximal-based inertial algorithms  \cite{ACR1,APR,BT,BCL1,CD,JM,LP,MM,MO}. We are aware that our general results obtained in this paper, when we consider variable stepsize in our algorithm, are important mainly by a theoretical point of view, however, for constant stepsize only one proximal point operator must be computed and in this case our algorithm is also suitable for numerical implementation. Indeed, if we fix the stepsize $\l_k\equiv 1$ in our algorithm, then the main contributions of the paper to the state of the art can be summarized in the following result, see Theorem \ref{convergencealgorithm}, Theorem \ref{weakconvergencealgorithm} and Theorem \ref{strconvergencelambda1}.

\begin{Theorem}\label{tmodel} Assume that $0<q<1,\,\a,c,p>0$ and for some starting points $x_0,x_1\in\mathcal{H}$ let $(x_k)$ be a sequence generated by \eqref{algo}, that is,
$x_{k+1}=\prox\nolimits_{f}\left(x_k+\left(1-\frac{\a}{k^q}\right)(x_k -  x_{k-1})-\frac{c}{k^p}\right),\mbox{ for all }k\ge 1.$
For every $k\ge 2$ let us denote $u_k$ the element  from $\p f(x_k)$ that satisfies \eqref{formx} with equality, that is,
$x_{k}=\left(1-\frac{\a}{(k-1)^q}\right)(x_{k-1}-x_{k-2})-u_k +\left(1- \frac{c}{(k-1)^p}\right) x_{k-1}.$
\begin{enumerate}
\item[(i)] If  $q+1<p\le 2$ and for $p=2$ one has $c>q(1-q)$,
then  $(x_k)$ converges weakly to a minimizer of $f.$ Further,
$f(x_k)-\min_{\cH} f=\mathcal{O}(k^{-q-1}),\,\|x_{k}-x_{k-1}\|=\mathcal{O}(k^{-\frac{q+1}{2}})\mbox{ and }\|u_k\|=o(k^{-\frac{q+1}{2}})\mbox{ as } k\to+\infty.$ Moreover,
$\sum_{k=1}^{+\infty} k^{q}(f(x_k)-\min_{\cH} f)<+\infty,$ $\sum_{k=1}^{+\infty} k\|x_k-x_{k-1}\|^2<+\infty$ and $\sum_{k=2}^{+\infty} k^{q+1}\|u_k\|^2<+\infty.$
\item[(ii)] If $p=q+1$ then for all $s\in\left]\frac12,\frac{q+1}{2}\right[$ one has
$f(x_k)-\min_{\cH} f=o(k^{-2s}),$ $\|x_{k}-x_{k-1}\|=o(k^{-s})$ and $\|u_k\|=o(k^{-s})\mbox{ as } k\to+\infty.$
Further,
 $\sum_{k=1}^{+\infty} k^{2s-1}(f(x_k)-\min_{\cH} f)<+\infty,$ $\sum_{k=1}^{+\infty} k^{2s-q}\|x_k-x_{k-1}\|^2<+\infty$ and $\sum_{k=2}^{+\infty} k^{2s}\|u_k\|^2<+\infty.$
\item[(iii)] If $1<p< q+1$ then $\lim_{k\to+\infty}\|x_k-x^*\|=0$, where $x^*$ is the minimal norm element from $\argmin f$.

    Further, if $1<p\le 2q$ then $\|x_k-x_{k-1}\|^2,\,\|u_k\|^2\in \mathcal{O}(k^{q-p-1})\mbox{ as }k\to+\infty$ and $f(x_{k})-\min_{\cH}f= \mathcal{O}(k^{-p})\mbox{ as }k\to+\infty.$

    If $2q< p\le \frac{3q+1}{2}$ then
$\|x_k-x_{k-1}\|^2,\,\|u_k\|^2\in \mathcal{O}(k^{-q-1})\mbox{ as }k\to+\infty$ and $f(x_{k})-\min_{\cH}f= \mathcal{O}(k^{-p})\mbox{ as }k\to+\infty.$

 If $\frac{3q+1}{2}<p<q+1$, then
 $\|x_k-x_{k-1}\|^2,\,\|u_k\|^2\in \mathcal{O}(k^{2p-4q-2})\mbox{ as }k\to+\infty.$
Additionally, if  $\frac{3q+1}{2}<p<\frac{4q+2}{3}$, then $f(x_{k})-\min_{\cH}f= \mathcal{O}(k^{-p})\mbox{ as }k\to+\infty$
and if $\frac{4q+2}{3}\le p<q+1$, then $f(x_{k})-\min_{\cH}f= \mathcal{O}(k^{2p-4q-2})\mbox{ as }k\to+\infty.$

Moreover, if $2q< p$ then
 $\sum_{k=1}^{+\infty} k^{2q}\|u_k\|^2<+\infty$ and
$\sum_{k=1}^{+\infty} k^q\|x_{k+1}-x_{k}\|^2<+\infty.$
\end{enumerate}
\end{Theorem}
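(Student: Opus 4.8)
The plan is to derive all three regimes from a single discrete Lyapunov (energy) analysis, with cases (i)--(iii) separated by the sign of the coefficients appearing in the energy difference, and with these signs governed precisely by the position of $p$ relative to $q+1$. First I would record the exact subgradient identity produced by the proximal step. Since $x_{k+1}=\prox_f(y_k-c_kx_k)$ with $\l_k\equiv1$, the optimality condition for the proximal minimization gives $u_{k+1}:=y_k-c_kx_k-x_{k+1}\in\p f(x_{k+1})$, which is exactly the relation \eqref{formx} written with equality and defines the sequence $(u_k)$ in the statement. Substituting $y_k=x_k+\a_k(x_k-x_{k-1})$ with $\a_k=1-\a/k^q$ and $c_k=c/k^p$ then expresses $u_{k+1}$ through the two discrete velocities $x_{k+1}-x_k$, $x_k-x_{k-1}$ and the iterate $x_k$, which is the only place the regularization enters.

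The core step is to build a discrete energy of the form
\[
E_k=A_k\bigl(f(x_k)-\min_{\cH} f\bigr)+\frac12\bigl\|a_k(x_k-z)+b_k(x_k-x_{k-1})\bigr\|^2+\g_k\|x_k-z\|^2,
\]
where $z\in\argmin f$ is a reference point and $A_k,a_k,b_k,\g_k$ are powers of $k$ calibrated to match the Lyapunov function used for the continuous system \eqref{DynSys} (in particular $A_k\sim k^{q+1}$ and $b_k^2\sim A_k$). The key computation estimates $E_{k+1}-E_k$ using only convexity of $f$ via the two subgradient inequalities $f(z)\ge f(x_{k+1})+\<u_{k+1},z-x_{k+1}\>$ and $f(x_k)\ge f(x_{k+1})+\<u_{k+1},x_k-x_{k+1}\>$, combined with the identity for $u_{k+1}$. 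This should yield a telescoping inequality $E_{k+1}-E_k\le-P_k+R_k$, where $P_k$ collects nonnegative terms of the types $k^{r}(f(x_k)-\min_{\cH}f)$, $k^{s}\|x_k-x_{k-1}\|^2$ and $k^{t}\|u_{k+1}\|^2$, and $R_k$ is an error whose summability must be checked against the hypotheses. The threshold $p=q+1$ is exactly where the coefficient $\g_k$ multiplying $\|x_k-z\|^2$ changes regime: for $q+1<p$ it is asymptotically negligible, allowing weak convergence, whereas for $p<q+1$ the Tikhonov term dominates and forces strong convergence.

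From the telescoped inequality I would then extract the conclusions case by case. In (i), boundedness of $E_k$ with $A_k\sim k^{q+1}$ gives $f(x_k)-\min_{\cH}f=\cO(k^{-q-1})$ and $\|x_k-x_{k-1}\|=\cO(k^{-(q+1)/2})$, while summability of $P_k$ produces the three series, and an Abel/summation-by-parts refinement upgrades the subgradient bound to $\|u_k\|=o(k^{-(q+1)/2})$; weak convergence then follows from a discrete Opial argument, showing $(\|x_k-z\|)$ converges for each $z\in\argmin f$ (using $\sum k\|x_k-x_{k-1}\|^2<+\infty$) and that every weak cluster point minimizes $f$ (using $f(x_k)\to\min_{\cH}f$ and weak lower semicontinuity). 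The critical case (ii) runs the same energy with $A_k\sim k^{2s}$ for any $s\in\,]\frac12,\frac{q+1}2[$: the borderline $\g_k$ just fails to help, so the endpoint $s=\frac{q+1}2$ is lost but the $o(\cdot)$ rates and the corresponding series survive. In (iii), $1<p<q+1$, the dominant Tikhonov term drives $\|x_k-x^*\|\to0$ for the minimal-norm solution $x^*$, and the several sub-ranges of $p$ arise because the sharp balance between the decay of $c_k$ and the inertial scale shifts, yielding the stated case-dependent $\cO$-rates for velocities, subgradients and function values.

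I expect the main obstacle to be the precise calibration of $A_k,a_k,b_k,\g_k$ so that the discrete second-order differences---lacking an exact product rule analogue of the continuous computation---cancel correctly, and in particular controlling the cross term between $\|x_k-z\|^2$ and the velocity that appears after expanding the squared norm. Capturing the exact boundary $p=q+1$ and the sharp endpoint condition $c>q(1-q)$ at $p=2$ in (i) requires tracking the error $R_k$ exactly rather than merely asymptotically, since there it decides whether $E_k$ stays bounded. The strong-convergence statement in (iii) is the most delicate: proving $\lim_k\|x_k-x^*\|=0$ rather than only $\liminf$ needs the regularization term to be quantitatively coercive toward $x^*$ together with a separate estimate showing the auxiliary combination $a_k(x_k-x^*)+b_k(x_k-x_{k-1})$ vanishes.
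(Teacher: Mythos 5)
Your plan is the right general shape for parts (i) and (ii) --- a discrete Lyapunov function with power-of-$k$ coefficients, closed using the two subgradient inequalities and followed by a discrete Opial argument --- but the energy you propose will not close as written. The paper's functional is $E_k=\mu_{k-1}(f(x_{k-1})-f^*)+\|a_{k-1}(x_{k-1}-x^*)+b_{k-1}(x_k-x_{k-1}+\l_{k-1}u_k)\|^2+\nu_{k-1}\|x_{k-1}-x^*\|^2+\s_{k-1}\|x_{k-1}\|^2$, and both ingredients you dropped are essential. Including the subgradient $\l_{k-1}u_k$ \emph{inside} the squared anchor is what makes the telescoping work for an implicit scheme: by \eqref{formx}, $x_{k+1}-x_k+\l_k u_{k+1}=\a_k(x_k-x_{k-1})-c_kx_k$, so $v_{k+1}$ can be expanded entirely in terms of quantities at step $k$ without ever invoking smoothness; with only $b_k(x_k-x_{k-1})$ in the norm the cross terms do not cancel. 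And the extra term $\s_{k-1}\|x_{k-1}\|^2$ is needed to absorb the Tikhonov cross terms $-2a_kb_kc_k\<x_k,x_k-x^*\>$ and $-2\a_kb_k^2c_k\<x_k-x_{k-1},x_k\>$ that appear when you expand $v_{k+1}$; your $\g_k\|x_k-z\|^2$ alone cannot do this, and it is exactly the sign analysis of the resulting coefficient $s_k$ (together with the coefficient $n_k$ of $\|x_k-x^*\|^2$) that produces the conditions $q+1<p\le 2$ and $c>q(1-q)$ at $p=2$.

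The more serious gap is part (iii). Anchoring the energy at the fixed minimal-norm point $x^*$ and invoking ``quantitative coercivity of the regularization toward $x^*$'' only yields $\liminf_{k\to+\infty}\|x_k-x^*\|=0$ (this is precisely what the paper's Theorem \ref{strconvergencealgorithm} gets, via a three-case argument on whether $\|x_k\|$ eventually stays inside or outside the ball $B(0,\|x^*\|)$); it does not give the full limit claimed in (iii). The paper's proof of $\lim_k\|x_k-x^*\|=0$ requires three further ideas absent from your sketch: (a) the anchor must be the \emph{moving} Tikhonov trajectory $\ox_k=\argmin_x\{f(x)+\frac{c}{2k^p}\|x\|^2\}$, using the strong convexity bound $f_k(x)-f_k(\ox_k)\ge\frac{c}{2k^p}\|x-\ox_k\|^2$ and the fact $\ox_k\to x^*$; (b) the drift $\|\ox_k-\ox_{k-1}\|\le\frac{p_1}{k}\|\ox_k\|$ (Lemma \ref{lfosapen}) must be controlled so the extra error terms $S_k$ are summable; (c) to convert the resulting differential-type inequality $e_{k+1}-e_k+\frac{H}{k^\b}e_k\le(\text{error})$ into an actual decay rate for $e_k$, one multiplies by the product weight $\pi_k=1/\prod_{i}(1-H/i^{\b})$ and sums (Lemma \ref{pkstuff}); the competition among the surviving terms $n^{q-p}$, $n^{2q-p-1+\b}$ and $n^{\max(p-q-2+\b,\,q-2+\b)}$ with $\b=\max(q,p-q)$ is what generates the three sub-ranges $p\le 2q$, $2q<p\le\frac{3q+1}{2}$, $\frac{3q+1}{2}<p<q+1$ and their distinct rates. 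Without (a)--(c) your outline cannot reach the stated conclusion of (iii).
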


Note that the rates presented at (i) in Theorem \ref{tmodel} are in concordance with the results obtained in \cite{AAD} (see also \cite{AAD1}), but we additionally obtain convergence rates  for the subgradient and also some sum estimates. Observe that the sum estimate involving the discrete velocity $\|x_k-x_{k-1}\|$ does not depend by $q.$ Of course $q$ can be arbitrary closed to 1 and therefore the rates obtained at (i) are comparable to Nesterov's rate for the case $\a>3$, see \cite{AAD,AAD1,Nest1}, however our results are valid for every $\a>0.$ Nevertheless, our main scope was to obtain, for the same constellation of the parameters in algorithm \eqref{algo}, fast convergence of the objective function values in the generated sequences to the minimum of the objective function $f$, fast convergence of the discrete velocity $\|x_k-x_{k-1}\|$ to zero and strong convergence of the generated sequences to a minimizer of $f.$ Even more, our aim was to control to which minimizer the generated sequences converge, more precisely our target was to find the minimal norm minimizer of the objective function $f$ and this is the motivation of using Tikhonov regularization in algorithm \eqref{algo}. According to the results presented at (iii) in Theorem \ref{tmodel} the goal emphasized above was fully attained. Though we fix the stepsize to 1 in Theorem \ref{tmodel}, already in this particular case of \eqref{algo}  one can see that indeed the Tikhonov regularization parameter and the inertial parameter are strongly correlated and according to (ii) there is a setting of the parameters that separates the cases when weak convergence and strong convergence can be obtained. We emphasize that due to our knowledge the result presented at (iii) in Theorem \ref{tmodel} is the first strong convergence result in the literature concerning proximal inertial algorithms, though in \cite{AL-nemkoz} the partial strong convergence result $\liminf_{k\to+\infty}\|x_k-x^*\|=0$,  was obtained for the case $q=1$ and $p=2$. Nevertheless, as we mentioned before, $q=1$ is not an optimal choice for our algorithm since in case $q<1$ improved convergence rates can be obtained, further $\liminf_{k\to+\infty}\|x_k-x^*\|=0$ assures only that the sequence $(x_k)$ has a subsequence that converges in the strong topology to $x^*$, meanwhile according to Theorem \ref{tmodel} in this paper we obtain full strong convergence, that is, $\lim_{k\to+\infty}\|x_k-x^*\|=0.$

However, in order to make  a comprehensive analysis, in our algorithm we allow also variable stepsize of the form $\l_k=\l k^\d,\l>0,\,\d\in\R.$
Note that by considering variable stepsize in \eqref{algo} there is a setting of the parameters when one can obtain arbitrary fast rates for the potential energy $f(x_k)-\min f$ or discrete velocity $\|x_k-x_{k-1}\|$, see Theorem \ref{convergencealgorithm}. This result is  in concordance  with the results  obtained by G\"uler in \cite{Gu2}, (see also \cite{Gu1}), however our parameters have a much simpler form.
  According to Theorem \ref{convergencealgorithm}, Theorem \ref{weakconvergencealgorithm} and Theorem \ref{strconvergencelambda1} the stepsize is strongly correlated to the inertial parameter and Tikhonov regularization parameter.

 More precisely, if $q+1<p$ and $\d\ge 0$, then the sequence $(x_k)$ generated by algorithm \eqref{algo} converges weakly to a minimizer of our objective function $f$, see Theorem \ref{weakconvergencealgorithm}. Further, the fast convergence of arbitrary rate of discrete velocity $\|x_k-x_{k-1}\|$ to zero and  convergence of arbitrary rate of the potential energy $f(x_k)-\min_{\cH}f$ to zero  is assured, see Theorem \ref{convergencealgorithm}.  If $1<p<q+1,\,0<\l<1$ and $\d\le 0$, then the strong convergence result $\liminf_{k\to+\infty}\|x_k-x^*\|=0$, where $x^*$ is the minimum norm minimizer of the objective function $f$, is obtained, see Theorem \ref{strconvergencealgorithm}. According to Theorem \ref{strconvergencerates} also in this case the fast convergence of the potential energy $f(x_k)-\min_{\cH}f$ and discrete velocity $\|x_k-x_{k-1}\|$ to zero are assured. As we mentioned before,  similar results were obtained only in \cite{AL-nemkoz}, for the case $q=1$ and $p=2$, which is not covered by our analysis.
We emphasize again  the greatest strength of our paper, that is, for the case $0<q<1,\,1<p<q+1$ and $\l_k\equiv 1$ we are able to obtain 'full' strong convergence to the minimal norm solution $x^*$, that is, $\lim_{k\to+\infty}\|x_k-x^*\|=0$ and we obtain fast convergence of the potential energy $f(x_k)-\min_{\cH}f$ and discrete velocity $\|x_k-x_{k-1}\|$ to zero and even some sum estimates. In order to obtain these results some new techniques have been developed. In case $p=q+1$ neither weak convergence nor strong convergence of the generated sequences can be obtained, but surprisingly, in this case fast convergence of the potential energy  and discrete velocity to zero can be obtained both for the case $\d\ge 0$ and $\d<0.$

\subsection{The organization of the paper}
The paper is organized as follows. In the next section we treat the case $q+1\le p$ in order to obtain fast convergence rates for the function values in the sequence generated by algorithm \eqref{algo} but also for the discrete velocity and subgradient. Further, if $q+1<p$ then the weak convergence of the generated sequences to a minimizer of the objective function is also obtained. In section 3 we deal with the case $1< p\le q+1$. We obtain fast convergence results concerning the potential energy, discrete velocity and subgradient. Moreover,  if $1<p<q+1$ strong convergence results for the sequence generated by \eqref{algo} to the minimum norm minimizer of the objective function is shown. Further, in case the stepsize parameter $\l_k\equiv 1$ we obtain full strong convergence of the sequences generated by Algorithm \eqref{algo} and improved convergence rates for the function values and velocity. Finally we conclude our paper by underlying some possible further researches.

\section{Convergence rates and weak convergence for the case $ q+1\le p$}
In this section we analyze the weak convergence properties of the sequence generated by the algorithm \eqref{algo}. We obtain fast convergence to zero of the discrete velocity and subgradient. We also show that the function values in the generated sequences converge to the global minimum of the objective function $f.$
Even more, the variable stepsize parameter $\l_k=\l k^\d,\,\l,\d>0$ allows to obtain the estimate of order $\mathcal{O}(k^{-q-\d-1})$ for the decay $f(x_k)-\min_{\cH} f$ which can  be arbitrary large, depending on parameter $\d.$

\subsection{Convergence rates}
Concerning fast convergence of the function values, discrete velocity and subgradient, we have the following result.
\begin{Theorem}\label{convergencealgorithm}
Assume that $0<q\le 1$, $q+1\le p$, $\l_k=\l k^\d,\,\l>0,\,\d\ge 0$ and let $(x_k)$ be a sequence generated by \eqref{algo}. For every $k\ge 2$ let us denote $u_k$ the element  from $\p f(x_k)$ that satisfies \eqref{formx} with equality, i.e.,
$$x_{k}=\a_{k-1}(x_{k-1}-x_{k-2})-\l_{k-1} u_k +\left(1- c_{k-1}\right) x_{k-1}.$$  Then   the following  results are valid.
\begin{enumerate}
\item[(i)] If $\a>0,\,\d\ge0,$ $0<q<1,\,q+1<p\le 2$ and for $p=2$ one has $c>q(1-q)$, or  $\a>3,\,0\le\d<\a-3,$ $q=1$ and  $p>2$ then
\[f(x_k)-\min_{\cH} f=\mathcal{O}(k^{-q-\d-1}),\,\|x_{k}-x_{k-1}\|=\mathcal{O}(k^{-\frac{q+1}{2}})\mbox{ and }\|u_k\|=o(k^{-\frac{q+1}{2}-\d})\mbox{ as } k\to+\infty.\]
Further,
 \[\ds\sum_{k=1}^{+\infty} k^{q+\d}(f(x_k)-\min_{\cH} f)<+\infty,\,\ds\sum_{k=1}^{+\infty} k\|x_k-x_{k-1}\|^2<+\infty\mbox{ and }\ds\sum_{k=2}^{+\infty} k^{q+2\d+1}\|u_k\|^2<+\infty.\]
\item[(ii)] If $\a>0,\,\d\ge0,$ $0<q<1$ and $q+1\le p$, or  $\a>3,\,0\le\d<\a-3,$ $q=1$ and  $p\ge 2$ then for all $s\in\left]\frac12,\frac{q+1}{2}\right[$ one has
$$f(x_k)-\min_{\cH} f=o(k^{-2s-\d}),\,\|x_{k}-x_{k-1}\|=o(k^{-s})\mbox{ and }\|u_k\|=o(k^{-s-\d})\mbox{ as } k\to+\infty.$$
Further,
 $$\ds\sum_{k=1}^{+\infty} k^{2s+\d-1}(f(x_k)-\min_{\cH} f)<+\infty,\,\ds\sum_{k=1}^{+\infty} k^{2s-q}\|x_k-x_{k-1}\|^2<+\infty\mbox{ and }\ds\sum_{k=2}^{+\infty} k^{2s+2\d}\|u_k\|^2<+\infty.$$
\end{enumerate}
\end{Theorem}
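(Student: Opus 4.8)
The plan is to run a discrete Lyapunov analysis modelled on the continuous argument for \eqref{DynSys}, treating the regularization term $c_k x_k$ as a controllable perturbation. Fix $x^*\in\argmin f$ and set $f_*=\min_\cH f$. The starting point is the exact identity encoded in \eqref{formx}, namely
\[\l_k u_{k+1}=\a_k(x_k-x_{k-1})+(1-c_k)x_k-x_{k+1},\qquad u_{k+1}\in\p f(x_{k+1}),\]
together with the two convexity inequalities $f(x_{k+1})-f_*\le\<u_{k+1},x_{k+1}-x^*\>$ and $f(x_{k+1})-f(x_k)\le\<u_{k+1},x_{k+1}-x_k\>$; these convert the proximal update into scalar inequalities. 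First I would introduce a two--block energy
\[\mathcal{E}_k=a_k\big(f(x_k)-f_*\big)+\tfrac12\big\|b_k(x_k-x^*)+t_k(x_k-x_{k-1})\big\|^2,\]
with polynomial weights tuned to the announced rates: in part (i) the leading weight is $a_k\asymp k^{q+\d+1}$ (the extra $\d$ being the time rescaling induced by the stepsize $\l_k=\l k^\d$), $t_k\asymp k^{(q+1)/2}$ and $b_k$ a suitable power of $k$; in part (ii) the same structure is used but with the sub--critical weight $a_k\asymp k^{2s+\d}$, $2s<q+1$. The coefficients are chosen so that, after expansion, the leading--order terms of $\mathcal{E}_{k+1}-\mathcal{E}_k$ cancel, which is the discrete counterpart of differentiating the continuous Lyapunov function.

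The core computation is the estimate of $\mathcal{E}_{k+1}-\mathcal{E}_k$. Expanding the square by the three--point identity, substituting $x_{k+1}-x_k=\a_k(x_k-x_{k-1})-c_kx_k-\l_k u_{k+1}$ and inserting the two convexity inequalities, I expect to reach
\[\mathcal{E}_{k+1}-\mathcal{E}_k\le-\mu_k\big(f(x_{k+1})-f_*\big)-\nu_k\|x_{k+1}-x_k\|^2-\rho_k\l_k^2\|u_{k+1}\|^2+r_k,\]
where $\mu_k,\nu_k,\rho_k\ge0$ are, up to constants, the weights appearing in the three announced series, and $r_k$ gathers the genuine error terms, each of the form $c_k\times(\text{bounded anchor})$ or $c_k\|x^*\|\,\|x_k-x^*\|$. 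The whole role of the hypotheses ($q+1\le p$; the threshold $c>q(1-q)$ on the boundary $p=2$ when $0<q<1$; and $\a>3$, $\d<\a-3$ when $q=1$) is precisely to force the nonnegativity of $\mu_k,\nu_k,\rho_k$ and the summability of $r_k$: since $c_k=ck^{-p}$ decays at least as fast as $k^{-q-1}$, the Tikhonov contribution stays dominated by the dissipative terms. Because $r_k$ may still carry a factor $\|x_k-x^*\|$, itself controlled by $\mathcal{E}_k$, I would establish boundedness of $(\mathcal{E}_k)$ through a discrete Gronwall argument of the type $\mathcal{E}_{k+1}\le\mathcal{E}_k+\e_k\sqrt{\mathcal{E}_k}+\e_k'$ with $\sum\e_k,\sum\e_k'<+\infty$. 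Once $(\mathcal{E}_k)$ is bounded, reading off the two blocks gives the $\mathcal{O}$--rates for the function gap and the velocity, summing the displayed inequality (telescoping, using $\sum r_k<+\infty$) gives the three series estimates, and the little--o rate for the subgradient is immediate since the general term $\rho_k\l_k^2\|u_{k+1}\|^2$ of a convergent series tends to $0$.

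For the $o$--statements of part (ii) the sub--critical weight makes the energy eventually monotone, so that $k^{2s+\d}(f(x_k)-f_*)$ and $k^{2s}\|x_k-x_{k-1}\|^2$ converge; combining a putative positive limit with the corresponding series estimate and $\sum_k k^{-1}=\sum_k k^{-q}=+\infty$ forces the limit to be $0$, upgrading $\mathcal{O}$ to $o$ for the function gap and the velocity (the subgradient requiring no such step). The main obstacle I anticipate is the simultaneous weight calibration: arranging the cancellation of the first--order terms in $\mathcal{E}_{k+1}-\mathcal{E}_k$ forces one to expand $\a_k-\a_{k-1}$, the ratios $a_{k+1}/a_k$, and similar quantities to the correct order in $k^{-1}$, while keeping every residual coefficient nonnegative \emph{exactly} on the range $q+1\le p$. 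The truly delicate points are the boundary $p=2$, where a would--be dissipative coefficient changes sign and must be rescued by the regularization strength through $c>q(1-q)$, and the endpoint $q=1$, where the classical threshold $\a>3$ (here coupled to the stepsize as $\d<\a-3$) reappears.
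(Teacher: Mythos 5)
Your overall strategy is the one the paper follows: a discrete Lyapunov function with polynomial weights calibrated to the announced rates, the two subgradient inequalities to convert the proximal update into scalar estimates, telescoping to get the sum estimates, and (for the $o$-rates) the combination ``the weighted quantity has a limit'' $+$ ``its $\frac1k$-weighted series converges'' $+$ $\sum_k\frac1k=+\infty$. The anchor with or without $\l_k u_{k+1}$ inside the square is immaterial (by \eqref{formx} the two are the same object up to an index shift), and your mechanism for the $o$-statements and for reading off the rates from boundedness of the energy is exactly the paper's.

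There is, however, a genuine gap in your treatment of the Tikhonov perturbation. You claim the residual $r_k$ consists only of terms of the form $c_k\times(\text{bounded anchor})$ or $c_k\|x^*\|\,\|x_k-x^*\|$, to be absorbed by a Gronwall inequality $\mathcal{E}_{k+1}\le\mathcal{E}_k+\e_k\sqrt{\mathcal{E}_k}+\e_k'$. That is not what the expansion produces. Substituting $x_{k+1}-x_k=\a_k(x_k-x_{k-1})-c_kx_k-\l_ku_{k+1}$ into the squared anchor generates, among others, the cross terms $-2t_{k+1}^2\a_kc_k\<x_k-x_{k-1},x_k\>$ and $-2b_{k+1}t_{k+1}c_k\<x_{k+1}-x^*,x_k\>$, which after completing squares leave behind $\|x_k\|^2$ and $\|x_{k-1}\|^2$ with coefficients of order $k^{q+1-p}$. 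For $p$ in the range $q+1\le p\le 2$ these coefficients are $\mathcal{O}(1)$ (not summable), and $\|x_k\|$ is not known to be bounded a priori --- boundedness of the iterates is part of what is being proved. So the Gronwall step as you describe it does not close. The paper resolves this by enlarging the energy with two extra blocks, $\nu_{k-1}\|x_{k-1}-x^*\|^2$ and, crucially, $\s_{k-1}\|x_{k-1}\|^2$ with $\s_k=\a_{k+1}b_{k+1}^2c_{k+1}$: the identity $-2a_kb_kc_k\<x_k,x_k-x^*\>=a_kb_kc_k(\|x^*\|^2-\|x_k-x^*\|^2-\|x_k\|^2)$ turns the dangerous cross terms into nonpositive quadratics that are absorbed exactly by these blocks, and the only surviving residual is $a_kb_kc_k\|x^*\|^2=\mathcal{O}(k^{2r-1-p})$, summable precisely because $2r\le q+1\le p$ (with $2r<q+1$ enforced at the critical case $p=q+1$). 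This is where the hypothesis $q+1\le p$ actually enters, and it is the missing ingredient in your plan; without the $\|x_k\|^2$ block (or a separate a priori boundedness argument) the energy estimate cannot be closed on the whole stated parameter range.
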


\begin{proof} Given $x^*\in\argmin f$, set $f^*=f(x^*)=\min_{\cH} f$.

For $k\ge 2$, consider the discrete energy
\begin{align}\label{discreteenergy}
E_k=&\mu_{k-1}(f(x_{k-1})-f^*)+\|a_{k-1}(x_{k-1}-x^*)+b_{k-1}(x_k-x_{k-1}+\l_{k-1}u_k)\|^2\\
\nonumber&+\nu_{k-1}\|x_{k-1}-x^*\|^2+\s_{k-1}\|x_{k-1}\|^2,
\end{align}
where  $a_k=ak^{r-1},\,b_k=k^r$, $r\in \left(\frac12,\frac{q+1}{2}\right],\,2r+\d<a$,
 $\mu_k:=(2b_{k}^2-2a_{k}b_{k})\l_{k},$ $\nu_k:=-\a_{k+1}a_{k+1}b_{k+1} -a_{k}^2+a_{k}b_{k}$ and $\sigma_k=\a_{k+1}b_{k+1}^2c_{k+1}$, for all $k\ge 1.$

 If $q=1$, hence $\a>3,$ we also assume that $a<\a-1,$ hence $\d<\a-1-2r.$

Let us develop $E_k$. We show first, that there exists $k_0\ge 1$ such that the coefficients $\mu_k,\nu_k$ and $\s_k$ are nonnegative for all $k\ge k_0.$

According to the form of $(a_k)$ and $(b_k)$,  there exists  $k_1\ge 1$ such that  $b_k\ge a_k$ for all $k \geq k_1$, hence
\begin{equation}\label{signmu}
\mu_k=(2b_{k}^2-2a_{k}b_{k})\l_{k}\ge 0\mbox{ for all }k\ge k_1\mbox{ and }\mu_k=\mathcal{O}(k^{2r+\d})\mbox{ as }k\to+\infty.
\end{equation}
\vskip0.3cm

Obviously $\nu_k=-\a_{k+1}a_{k+1}b_{k+1} -a_{k}^2+a_{k}b_{k}=-a(k+1)^{2r-1}+\a a(k+1)^{2r-1-q}-a^2k^{2r-2}+a k^{2r-1}$ and  we show that $\phi(x,r)=-a(x+1)^{2r-1}+\a a(x+1)^{2r-1-q}-a^2x^{2r-2}+a x^{2r-1}\ge 0$ for $x$ big enough and that $\phi(x,r)=\mathcal{O}(x^{2r-1-q})$ as $x\to+\infty.$
Indeed, one has
\begin{align*}
\lim_{x\to+\infty}\frac{\phi(x,r)}{x^{2r-1-q}}&=\lim_{x\to+\infty}\frac{a x^{2r-1}-a(x+1)^{2r-1}+\a a(x+1)^{2r-1-q}-a^2x^{2r-2}}{x^{2r-1-q}}\\
&=\lim_{x\to+\infty}\left(\frac{a -a\left(1+\frac{1}{x}\right)^{2r-1}}{x^{-q}}+\a a\left(1+\frac{1}{x}\right)^{2r-1-q}-a^2x^{q-1}\right)\\
&=\lim_{x\to+\infty}\left(-\frac{a(2r-1)}{q}\frac{\left(1+\frac{1}{x}\right)^{2r-2}}{x^{1-q}}+\a a-a^2x^{q-1}\right)=L,
\end{align*}
where $L=\a a>0$ if $q<1$ and $L=-a(2r-1)+\a a-a^2$ if $q=1.$ However, if $q=1$ one has $a<\a-1$ and consequently  $0< \a+1-a-2r$ hence $L>0$ also in this case.

Hence, there exists $k_2\ge 1$ such that for all $\frac12< r\le \frac{q+1}{2}$ one has
\begin{equation}\label{signnu}
\nu_k=-\a_{k+1}a_{k+1}b_{k+1} -a_{k}^2+a_{k}b_{k}\ge 0,\mbox{ for all }k\ge k_2\mbox{ and }\nu_k=\mathcal{O}(k^{2r-1-q})\mbox{ as }k\to+\infty.
\end{equation}

Finally, it is obvious that there exists $k_3\ge 1$ such that $\a_{k+1}b_{k+1}^2c_{k+1}=c\left(1-\frac{\a}{(k+1)^q}\right)(k+1)^{2r-p}\ge 0$ for all $k\ge k_3$, hence for all $\frac12< r\le \frac{q+1}{2}$ one has
\begin{equation}\label{signsigma}
\sigma_k=\a_{k+1}b_{k+1}^2c_{k+1}\ge 0\mbox{ for all }k\ge k_3\mbox{ and }\sigma_k=\mathcal{O}(k^{2r-p})\mbox{ as }k\to+\infty.
\end{equation}
Now, take $k_0=\max(k_1,k_2,k_3)$ and one has $\m_k,\nu_k,\s_k\ge0$ for all $k\ge k_0.$
\vskip0.3cm
For simplicity let us denote
$v_k=\|a_{k-1}(x_{k-1}-x^*)+b_{k-1}(x_k-x_{k-1}+\l_{k-1}u_k)\|^2.$
Then,
\begin{align}\label{forEk}
v_k=&a_{k-1}^2\|x_{k-1}-x^*\|^2+b_{k-1}^2\|x_k-x_{k-1}\|^2+b_{k-1}^2\l_{k-1}^2\|u_k\|^2+2a_{k-1}b_{k-1}\<x_k-x_{k-1},x_{k-1}-x^*\>\\ \nonumber
&+2a_{k-1}b_{k-1}\l_{k-1}\<u_k,x_{k-1}-x^*\>+2b_{k-1}^2\l_{k-1}\<u_k,x_k-x_{k-1}\>.
\end{align}
Further
$$
2a_{k-1}b_{k-1}\<x_k-x_{k-1},x_{k-1}-x^*\>=a_{k-1}b_{k-1}(\|x_k-x^*\|^2-\|x_k-x_{k-1}\|^2-\|x_{k-1}-x^*\|^2)$$
and
$$2a_{k-1}b_{k-1}\l_{k-1}\<u_k,x_{k-1}-x^*\>=2a_{k-1}b_{k-1}\l_{k-1}\<u_k,x_{k}-x^*\>-2a_{k-1}b_{k-1}\l_{k-1}\<u_k,x_{k}-x_{k-1}\>.$$
Consequently, \eqref{forEk} becomes
\begin{align}\label{forEk1}
v_k=&a_{k-1}b_{k-1}\|x_k-x^*\|^2+(a_{k-1}^2-a_{k-1}b_{k-1})\|x_{k-1}-x^*\|^2+(b_{k-1}^2-a_{k-1}b_{k-1})\|x_k-x_{k-1}\|^2\\
\nonumber&+b_{k-1}^2\l_{k-1}^2\|u_k\|^2+2a_{k-1}b_{k-1}\l_{k-1}\<u_k,x_{k}-x^*\>+(2b_{k-1}^2-2a_{k-1}b_{k-1})\l_{k-1}\<u_k,x_k-x_{k-1}\> .
\end{align}
Let us proceed similarly with $v_{k+1}$. First notice that
from (\ref{formx}) we have
$$v_{k+1}=\|a_{k}(x_{k}-x^*)+b_{k}(\a_k(x_k-x_{k-1})-c_k x_k)\|^2.$$
Therefore, after development we get
\begin{align}\label{forEk+1}
v_{k+1}=&a_k^2\|x_k-x^*\|^2+\a_k^2b_k^2\|x_k-x_{k-1}\|^2+b_k^2c_k^2\|x_k\|^2+2\a_k a_kb_k\<x_k-x_{k-1},x_k-x^*\> \\
\nonumber&-2\a_kb_k^2c_k\<x_k-x_{k-1},x_k\>-2a_kb_kc_k\<x_k,x_k-x^*\>.
\end{align}
Further,
\begin{eqnarray*}
&&2\a_k a_kb_k\<x_k-x_{k-1},x_k-x^*\>=-\a_k a_k b_k(\|x_{k-1}-x^*\|-\|x_k-x_{k-1}\|^2-\|x_k-x^*\|^2) \\
&&-2\a_k b_k^2c_k\<x_k-x_{k-1},x_k\>=\a_k b_k^2c_k(\|x_{k-1}\|^2-\|x_k-x_{k-1}\|^2-\|x_k\|^2)
\\
&&-2a_kb_kc_k\<x_k,x_k-x^*\>=a_k b_kc_k(\|x^*\|^2-\|x_k-x^*\|^2-\|x_k\|^2).
\end{eqnarray*}
Hence, (\ref{forEk+1}) yields
\begin{align}\label{forEk+11}
v_{k+1}&=(a_k^2+\a_k a_k b_k-a_k b_kc_k)\|x_k-x^*\|^2-\a_k a_kb_k\|x_{k-1}-x^*\|^2
\\
\nonumber& +(\a_k^2b_k^2+\a_k a_k b_k-\a_k b_k^2c_k)\|x_k-x_{k-1}\|^2+(b_k^2c_k^2-\a_kb_k^2c_k -a_kb_kc_k)\|x_k\|^2\\
\nonumber&+\a_k b_k^2c_k\|x_{k-1}\|^2+a_k b_kc_k\|x^*\|^2.
\end{align}
Hence, \eqref{forEk+11} and \eqref{forEk1} lead to
\begin{align}\label{energdif}
v_{k+1}-v_k=&(a_k^2+\a_k a_kb_k-a_kb_kc_k-a_{k-1}b_{k-1})\|x_k-x^*\|^2\\
\nonumber&+(-\a_k a_kb_k -a_{k-1}^2+a_{k-1}b_{k-1})\|x_{k-1}-x^*\|^2\\
\nonumber& +(\a_k^2b_k^2+\a_k a_kb_k-\a_kb_k^2c_k-b_{k-1}^2+a_{k-1}b_{k-1})\|x_k-x_{k-1}\|^2\\
\nonumber&+(b_k^2c_k^2-\a_kb_k^2c_k -a_kb_kc_k)\|x_k\|^2+\a_kb_k^2c_k\|x_{k-1}\|^2-b_{k-1}^2\l_{k-1}^2\|u_k\|^2\\
\nonumber&+2a_{k-1}b_{k-1}\l_{k-1}\<u_k,x^*-x_{k}\>+(2b_{k-1}^2-2a_{k-1}b_{k-1})\l_{k-1}\<u_k,x_{k-1}-x_k\>\\
\nonumber&+a_kb_kc_k\|x^*\|^2.
\end{align}
From the subgradient inequality we have
$$\<u_k,x^*-x_{k}\>\le f^*-f(x_k)\mbox{ and }\<u_k,x_{k-1}-x_k\>\le f(x_{k-1})-f(x_k).$$

Consequently, we get for all $k>k_0$ that
\begin{align}\label{forfrate}
&2a_{k-1}b_{k-1}\l_{k-1}\<u_k,x^*-x_{k}\>+(2b_{k-1}^2-2a_{k-1}b_{k-1})\l_{k-1}\<u_k,x_{k-1}-x_k\>\\
\nonumber&\le(2b_{k-1}^2-2a_{k-1}b_{k-1})\l_{k-1}(f(x_{k-1})-f^*)-2b_{k-1}^2\l_{k-1}(f(x_k)-f^*)\\
\nonumber&=\mu_{k-1}(f(x_{k-1})-f^*)-(\mu_k+(2b_{k-1}^2\l_{k-1}-2b_{k}^2\l_{k}+2a_{k}b_{k}\l_{k}))(f(x_{k})-f^*).
\end{align}

Let us denote  $m_k:=2b_{k-1}^2\l_{k-1}-2b_{k}^2\l_{k}+2a_{k}b_{k}\l_{k}$  and let us show that for all $\frac12< r\le \frac{q+1}{2}$ one has $2b_{k-1}^2\l_{k-1}-2b_{k}^2\l_{k}+2a_{k}b_{k}\l_{k}\ge 0$ for all $k\ge 1$. We can write equivalently as
$ k^{2r+\d}-ak^{2r+\d-1}-(k-1)^{2r+\d} \leq 0 $ for all $k\ge 1$.
Since $  2 r +\d< a $, by convexity of the function
$x\mapsto x^{2r+\d}$, the gradient differential inequality gives
$$
(x-1)^{2r+\d} \geq x^{2r+\d} -(2r+\d) x^{2r+\d -1} \geq x^{2r+\d} -a x^{2r+\d -1}
$$
and the claim follows.
Hence,
\begin{equation}\label{signm}
m_k\ge 0\mbox{ for all }k\ge k_0\mbox{ and observe that }m_k=\mathcal{O}(k^{2r+\d-1})\mbox{ as }k\to+\infty.
\end{equation}

Combining \eqref{energdif} and \eqref{forfrate} we get for all $k\ge k_0$ that
\begin{align}\label{forfrate1}
&v_{k+1}-v_k+\mu_k(f(x_{k})-f^*)-\mu_{k-1}(f(x_{k-1})-f^*)+m_k(f(x_{k})-f^*)\le\\
\nonumber&(a_k^2+\a_k a_kb_k-a_kb_kc_k-a_{k-1}b_{k-1})\|x_k-x^*\|^2\\
\nonumber&+(-\a_k a_kb_k -a_{k-1}^2+a_{k-1}b_{k-1})\|x_{k-1}-x^*\|^2\\
\nonumber& +(\a_k^2b_k^2+\a_k a_kb_k-\a_kb_k^2c_k-b_{k-1}^2+a_{k-1}b_{k-1})\|x_k-x_{k-1}\|^2\\
\nonumber&+(b_k^2c_k^2-\a_kb_k^2c_k -a_kb_kc_k)\|x_k\|^2+\a_kb_k^2c_k\|x_{k-1}\|^2-b_{k-1}^2\l_{k-1}^2\|u_k\|^2\\
\nonumber&+a_kb_kc_k\|x^*\|^2.
\end{align}
Let us analyze now the sign of the coefficients of the right hand side of \eqref{forfrate1}.
We have,
\begin{align*}
a_k^2+\a_k a_kb_k-a_kb_kc_k-a_{k-1}b_{k-1}&=(\a_{k+1}a_{k+1}b_{k+1} +a_{k}^2-a_{k}b_{k})\\
\nonumber&+(\a_k a_kb_k-a_kb_kc_k-a_{k-1}b_{k-1}-\a_{k+1}a_{k+1}b_{k+1}+a_{k}b_{k})\\
\nonumber& =-\nu_k-n_k,
\end{align*}
where $n_k:=-(\a_k a_kb_k-a_kb_kc_k-a_{k-1}b_{k-1}-\a_{k+1}a_{k+1}b_{k+1}+a_{k}b_{k}).$

Now, one has $$n_k=-(2ak^{2r-1}-\a ak^{2r-1-q}-ack^{2r-1-p}-a(k-1)^{2r-1}-
a(k+1)^{2r-1}+\a a(k+1)^{2r-1-q}).$$
We show that  for all $\frac12< r\le \frac{q+1}{2}$ one has
$$\phi(x,r)=-2ax^{2r-1}+\a ax^{2r-1-q}+acx^{2r-1-p}+a(x-1)^{2r-1}+a(x+1)^{2r-1}-\a a(x+1)^{2r-1-q}\ge 0$$
for $x$ big enough.

Indeed, if $q=1$ then one can take $r=1$ and we have $\phi(x,1)=acx^{1-p}>0$. Otherwise, for $\frac12<r<1$ one has
\begin{align}\label{flimhalfsign}
&\lim_{x\to+\infty}\frac{a(x-1)^{2r-1}+a(x+1)^{2r-1}-2ax^{2r-1}}{x^{2r-3}}=\lim_{x\to+\infty}\frac{a\left(1-\frac{1}{x}\right)^{2r-1}+a\left(1+\frac{1}{x}\right)^{2r-1}-2a}{x^{-2}}\\
\nonumber&=\lim_{x\to+\infty}\frac{a(2r-1)}{x^2}\frac{\left(1-\frac{1}{x}\right)^{2r-2}-\left(1+\frac{1}{x}\right)^{2r-2}}{-2x^{-3}}=\lim_{x\to+\infty}\frac{a(2r-1)}{-2}\frac{\left(1-\frac{1}{x}\right)^{2r-2}-\left(1+\frac{1}{x}\right)^{2r-2}}{x^{-1}}\\
\nonumber&=\lim_{x\to+\infty}\frac{a(2r-1)(2r-2)}{-2x^2}\frac{\left(1-\frac{1}{x}\right)^{2r-3}+\left(1+\frac{1}{x}\right)^{2r-3}}{-x^{-2}}=a(2r-1)(2r-2)<0.
\end{align}
Consequently, there exists $C_1>0$ such that
\begin{equation}\label{fhalfsign}
a(x-1)^{2r-1}+a(x+1)^{2r-1}-2ax^{2r-1}\ge -C_1 x^{2r-3}\mbox{ for }x\mbox{ big enough.}
\end{equation}

Further, if $r=\frac{q+1}{2}$, then $\a ax^{2r-1-q}-\a a(x+1)^{2r-1-q}=0$, otherwise
\begin{align}\label{slimhalfsign}
&\lim_{x\to+\infty}\frac{ \a ax^{2r-1-q}-\a a(x+1)^{2r-1-q}}{x^{2r-2-q}}=\lim_{x\to+\infty}\a a\frac{1-\left(1+\frac{1}{x}\right)^{2r-1-q}}{x^{-1}}\\
\nonumber&=-\a a\lim_{x\to+\infty}(2r-1-q)\left(1+\frac{1}{x}\right)^{2r-2-q}=\a a(1+q-2r)>0.
\end{align}
Consequently, there exists $C_2>0$ such that
\begin{equation}\label{shalfsign}
\a ax^{2r-1-q}-\a a(x+1)^{2r-1-q}\ge C_2x^{2r-2-q}\mbox{ for }x\mbox{ big enough.}
\end{equation}

From the above relations one can deduce the following:
\begin{enumerate}
\item[(N1)] If $q=1$ and $r=1$ we have $p>2$ and $\phi(x,r)=acx^{1-p}>0$, hence $\phi(x,r)=\mathcal{O}(x^{1-p})$ as $x\to +\infty.$
\item[(N2)] If $q=1$ and $\frac12<r<1$ then $p\ge 2$ and according to  \eqref{flimhalfsign} and \eqref{slimhalfsign} and the fact that $\a>3$ we have
\begin{align*}
&\lim_{x\to+\infty}\frac{(a(x-1)^{2r-1}+a(x+1)^{2r-1}-2ax^{2r-1})+(\a ax^{2r-1-q}-\a a(x+1)^{2r-1-q})}{x^{2r-3}}\\
&=a(2r-1)(2r-2)+\a a(2-2r)=a(2-2r)(\a+1-2r)>0.
\end{align*}
Hence, $\phi(x,r)\ge Cx^{2r-3}+acx^{2r-1-p}$ for some $C>0$ and for $x$ big enough.
Consequently, also in this case $\phi(x,r)>0$ if $x$ is big enough and since $p\ge 2$ one has $\phi(x,r)=\mathcal{O}(x^{2r-3})$ as $x\to +\infty.$
\item[(N3)] If $0<q<1,\,r=\frac{q+1}{2}$ then $q+1<p\le 2$ and according to \eqref{fhalfsign} one has
\begin{align*}
&(a(x-1)^{2r-1}+a(x+1)^{2r-1}-2ax^{2r-1})+(\a ax^{2r-1-q}-\a a(x+1)^{2r-1-q})\\
&\ge-C_1 x^{q-2}\mbox{ for }x\mbox{ big enough.}
\end{align*}
Hence, $\phi(x,r)\ge acx^{q-p}-C_1 x^{q-2}\mbox{ for }x\mbox{ big enough.}$ Obviously $\phi(x,r)>0$ if $p<2$ and $x$ is big enough. Further,
if $p=2$ then \eqref{flimhalfsign} gives $\lim_{x\to\infty}\frac{\phi(x,r)}{x^{q-2}}=a(c+q(q-1))>0$, hence one has $\phi(x,r)>0$ if  $x$ is big enough.

Observe that in this case one has $\phi(x,r)=\mathcal{O}(x^{q-p})$ as $x\to +\infty.$
\item[(N4)] If $0<q<1,\,\frac12<r<\frac{q+1}{2}$ then $q+1\le p$ and according to \eqref{fhalfsign} and \eqref{shalfsign} one has
\begin{align*}
&\phi(x,r)\ge-C_1 x^{2r-3}+C_2x^{2r-2-q}+acx^{2r-1-p}\ge Cx^{2r-2-q}\mbox{ for some }C>0\mbox{ and for }x\mbox{ big enough.}
\end{align*}
Consequently, also in this case $\phi(x,r)>0$ if $x$ is big enough and observe that $\phi(x,r)=\mathcal{O}(x^{2r-2-q})$ as $x\to +\infty.$
\end{enumerate}
We conclude that there exist  $K_1\ge k_0$ such that for all $\frac12< r\le \frac{q+1}{2}$ one has
\begin{equation}\label{forstrxk}
n_k\ge 0,\mbox{ for all }k\ge K_1\mbox{ and the appropriate estimates emphasized at (N1)-(N4) hold.}
\end{equation}

For the coefficient of discrete velocity $\|x_k-x_{k-1}\|^2$ we have
\begin{align*}
\a_k^2b_k^2+\a_k a_kb_k-\a_kb_k^2c_k-b_{k-1}^2+a_{k-1}b_{k-1}&= k^{2r}-(k-1)^{2r}-2\a k^{2r-q} +ak^{2r-1}\\
&+a(k-1)^{2r-1}+\a^2 k^{2r-2q} -\a a k^{2r-q-1}-ck^{2r-p}\\
&+\a  ck^{2r-q-p}.
\end{align*}
We show that  for all $\frac12< r\le \frac{q+1}{2}$ one has
\begin{align*}
\phi(x,r)=&(x-1)^{2r}- x^{2r}+2\a x^{2r-q} -ax^{2r-1}-a(x-1)^{2r-1}-\a^2 x^{2r-2q} +\a a x^{2r-q-1}+cx^{2r-p}\\
&-\a  cx^{2r-q-p}\ge 0,\mbox{ if }x\mbox{ is big enough.}
\end{align*}
Even more, $\phi(x,r)=\mathcal{O}(x^{2r-q})$ as $x\to+\infty.$

Indeed
\begin{align*}
&\lim_{x\to+\infty}\frac{(x-1)^{2r}- x^{2r}+2\a x^{2r-q} -ax^{2r-1}-a(x-1)^{2r-1}-\a^2 x^{2r-2q} +\a a x^{2r-q-1}-\a  cx^{2r-q-p}}{x^{2r-q}}\\
&=\lim_{x\to+\infty}\frac{\left(1-\frac{1}{x}\right)^{2r}- 1 -ax^{-1}-\frac{a}{x}\left(1-\frac{1}{x}\right)^{2r-1}}{x^{-q}}+2\a \\
&=\lim_{x\to+\infty}\frac{-\frac{2r}{q}\left(1-\frac{1}{x}\right)^{2r-1}-\frac{a}{q}-\frac{a}{q}\left(1-\frac{1}{x}\right)^{2r-1}}{x^{1-q}}+2\a=L.
\end{align*}
Obviously, $L=2\a>0$ if $q<1$ and $L=-2r-2a+2\a$ if $q=1.$ But then $\a>3,$ $a< \a-1$ and $r\le 1$, hence also in this case $L=-2r-2a+2\a>0.$
Consequently, there exists $C>0$ such that
$$\phi(x,r)\ge Cx^{2r-q}+cx^{2r-p}>0\mbox{ if }x\mbox{ is big enough}$$
and since $p>1$ one has
$$\phi(x,r)=\mathcal{O}(x^{2r-q})\mbox{ as }x\to+\infty.$$

We conclude that there exist $K_2\ge k_0$ such that for all $\frac12< r\le\frac{q+1}{2}$ one has
\begin{equation}\label{forspeed}
\eta_k\ge0,\mbox{ for all }k\ge K_2\mbox{ and }\eta_k=\mathcal{O}(k^{2r-q})\mbox{ as }k\to+\infty,
\end{equation}
where $\eta_k:=-\a_k^2b_k^2-\a_k a_kb_k+\a_kb_k^2c_k+b_{k-1}^2-a_{k-1}b_{k-1}.$

The coefficient of $\|x_{k-1}\|^2$ is $\s_{k-1}=\a_kb_k^2c_k$, hence we write the coefficient of $\|x_k\|^2$ as
$$b_k^2c_k^2-\a_kb_k^2c_k -a_kb_kc_k=-\sigma_k+(b_k^2c_k^2+\a_{k+1}b_{k+1}^2c_{k+1}-\a_kb_k^2c_k -a_kb_kc_k).$$

We have
\begin{align*}
b_k^2c_k^2+\a_{k+1}b_{k+1}^2c_{k+1}-\a_kb_k^2c_k -a_kb_kc_k&=c^2k^{2r-2p}+ c(k+1)^{2r-p}-\a c(k+1)^{2r-p-q}\\
&-ck^{2r-p}+\a ck^{2r-p-q}-ack^{2r-1-p}.
\end{align*}
We show that  for all $\frac12< r\le \frac{q+1}{2}$ one has
$$\phi(x,r)=c(x+1)^{2r-p}-cx^{2r-p}-\a c(x+1)^{2r-p-q}+\a cx^{2r-p-q}-acx^{2r-1-p}+c^2x^{2r-2p}\le 0$$
for $x$ big enough. Even more, $\phi(x,r)=\mathcal{O}(x^{2r-p-1})$ as $x\to+\infty.$

Indeed, since $1<2r\le q+1\le p$ we have,
\begin{align*}
&\lim_{x\to+\infty}\frac{c(x+1)^{2r-p}-cx^{2r-p}-\a c(x+1)^{2r-p-q}+\a cx^{2r-p-q}-acx^{2r-1-p}}{x^{2r-p-1}}\\
&=\lim_{x\to+\infty}\left(\frac{c\left(1+\frac{1}{x}\right)^{2r-p}- c}{x^{-1}} +\a c\frac{-\left(1+\frac{1}{x}\right)^{2r-p-q}+1}{x^{q-1}}\right)-ac \\
&=\lim_{x\to+\infty}\frac{c(2r-p)x^{-2}\left(1+\frac{1}{x}\right)^{2r-p-1}}{x^{-2}}-ac=c(2r-p-a)<0.
\end{align*}
Obviously, there exists $C>0$ such that
$$c(x+1)^{2r-p}-cx^{2r-p}-\a c(x+1)^{2r-p-q}+\a cx^{2r-p-q}-acx^{2r-1-p}\le -Cx^{2r-p-1}$$
for $x$ big enough, and from the fact that $p>1$ we get that $\phi(x,r)\le 0$ for $x$ big enough.

We conclude that there exist $K_3\ge k_0$ such that for all $\frac12< r\le\frac{q+1}{2}$ one has
\begin{equation}\label{signxk}
s_k\ge 0\mbox{ for all }k\ge K_3\mbox{ and  }s_k=\mathcal{O}(k^{2r-p-1})\mbox{ as }k\to+\infty,
\end{equation}
where $s_k:=-(b_k^2c_k^2+\a_{k+1}b_{k+1}^2c_{k+1}-\a_kb_k^2c_k -a_kb_kc_k).$ 
Let $K_0=\max(K_1,K_2,K_3).$

Combining \eqref{forfrate1}, \eqref{forstrxk}, \eqref{forspeed} and \eqref{signxk} we obtain that
for all $k\ge K_0$ and $r\in\left(\frac12,\frac{q+1}{2}\right]$ it holds
\begin{align}\label{forfrate2}
v_{k+1}-v_k\le& -(\mu_k(f(x_{k})-f^*)-\mu_{k-1}(f(x_{k-1})-f^*))-m_k(f(x_{k})-f^*)\\
\nonumber&-(\nu_k\|x_k-x^*\|^2-\nu_{k-1}\|x_{k-1}-x^*\|^2)-n_k\|x_k-x^*\|^2\\
\nonumber&-(\sigma_k\|x_k\|^2-\sigma_{k-1}\|x_{k-1}\|^2)-s_k\|x_k\|^2\\
\nonumber& -\eta_k\|x_k-x_{k-1}\|^2-b_{k-1}^2\l_{k-1}^2\|u_k\|^2+ a_kb_kc_k\|x^*\|^2.
\end{align}
Consequently
\begin{align}\label{energi}
E_{k+1}&-E_k+m_k(f(x_{k})-f^*)+\eta_k\|x_k-x_{k-1}\|^2+b_{k-1}^2\l_{k-1}^2\|u_k\|^2+n_k\|x_k-x^*\|^2+s_k\|x_k\|^2\\
\nonumber&\le a_kb_kc_k\|x^*\|^2=ac\|x^*\|^2 k^{2r-1-p},
\end{align}
for all $k\ge K_0.$

Now in concordance  to the hypotheses of the theorem we take $r<\frac{q+1}{2}$ if $p=q+1$, consequently one has $2r-1-p<-1$, hence $$ac\|x^*\|^2\sum_{k\ge K_0}k^{2r-1-p}<+\infty.$$
By summing up \eqref{energi} from $k=K_0$ to $k=n>K_0$, we obtain that there exists $C_1>0$ such that
$$E_{n+1}\le C_1,$$
consequently
$$\mu_n(f(x_{n})-f^*)\le C_1,\mbox{ hence }f(x_{n})-f^*=\mathcal{O}(n^{-2r-\d})\mbox{ as }n\to+\infty,$$
$$\nu_n\|x_n-x^*\|^2\le C_1,\mbox{ hence }\|x_n-x^*\|^2=\mathcal{O}(n^{q+1-2r})\mbox{ as }n\to+\infty,$$
$$\s_n\|x_n\|^2\le C_1,\mbox{ hence }\|x_n\|^2=\mathcal{O}(n^{p-2r})\mbox{ as }n\to+\infty$$
and
$$\sup_{n\ge 1}\|an^{r-1}(x_n-x^*)+n^{r}(x_{n+1}-x_{n}+\l n^{\d}u_{n+1})\|<+\infty.$$
Further,
\begin{align*}&\sum_{k= K_0}^n m_k(f(x_{k})-f^*)\le C_1,\mbox{ hence according to \eqref{signm} one has }\sum_{k\ge 1}k^{2r+\d-1}(f(x_{k})-f^*)<+\infty,\\
&\sum_{k= K_0}^n \eta_k\|x_k-x_{k-1}\|^2\le C_1,\mbox{ hence according to \eqref{forspeed} one has }\sum_{k\ge 1}k^{2r-q}\|x_k-x_{k-1}\|^2<+\infty,\\
&\sum_{k= K_0}^n b_{k-1}^2\l_{k-1}^2\|u_k\|^2\le C_1,\mbox{ hence one has }\sum_{k\ge 1}k^{2r+2\d}\|u_k\|^2<+\infty,\\
&\sum_{k= K_0}^n s_k\|x_k\|^2\le C_1,\mbox{ hence according to \eqref{signxk} one has }\sum_{k\ge 1}k^{2r-p-1}\|x_k\|^2<+\infty.
\end{align*}

Moreover, $\sum_{k= K_0}^n n_k\|x_k-x^*\|^2\le C_1,$ hence according to \eqref{forstrxk} one has
$$\sum_{k\ge 1}k^{q-p}\|x_k-x^*\|^2<+\infty,\mbox{ if  }r=\frac{q+1}{2}$$
and
$$\sum_{k\ge 1}k^{2r-2-q}\|x_k-x^*\|^2<+\infty,\mbox{ if  }r<\frac{q+1}{2}.$$

Since $\sum_{k\ge 1}k^{2r+2\d}\|u_k\|^2<+\infty$ one has $\|u_n\|=o(n^{-r-\d})$ as $n\to +\infty$ which  yields
$$\sup_{n\ge 1}\|an^{r-1}(x_n-x^*)+n^{r}(x_{n+1}-x_{n})\|<+\infty.$$
Combining the latter relation with the facts that $\|x_n-x^*\|^2=\mathcal{O}(n^{q+1-2r})\mbox{ as }n\to+\infty$ and $n^{r-1}\le n^{\frac{2r-q-1}{2}}$ we obtain
$$\|x_{n+1}-x_n\|=\mathcal{O}(n^{-r})\mbox{ as }n\to+\infty.$$

Let us show now, that for $\frac12<r<\frac{q+1}{2}$ one has $f(x_n)-f^*= o(n^{-2r-\d})$ and $\|x_n-x_{n-1}\|=o(n^{-r}).$

From \eqref{energi} we get
$$\sum_{k\ge 1} [(E_{k+1}-E_{k}]_+<+\infty,\mbox{ where }[s]_+=\max(s,0).$$
Therefore, the following  limit exists
\begin{equation}\label{foroest}
\lim_{k\to+\infty}(\|ak^{r-1}(x_k-x^*)+k^{r}(x_{k+1}-x_{k}+\l k^{\d}u_{k+1})\|^2+\s_k\|x_k\|^2+\mu_k (f(x_{k})-f^*)+\nu_k\|x_k-x^*\|^2).
\end{equation}
Note that according to \eqref{signsigma}, \eqref{signmu} and \eqref{signnu} one has $\s_k=\mathcal{O}(k^{2r-p}),\,\mu_k=\mathcal{O}(k^{2r+\d})$ and $\nu_k=\mathcal{O}(k^{2r-1-q}),$ respectively.

Further, if $r<\frac{q+1}{2}$ we have
 $\sum_{k\ge 1} k^{2r-2-q}\|x_k-x^*\|^2<+\infty$ and the following estimates also hold:  $\sum_{k\ge 1} k^{2r-q}\|x_k-x_{k-1}\|^2<+\infty$, $\sum_{k\ge 1} k^{2r+2\d}\|u_k\|^2< +\infty$, $\sum_{k\ge 1}k^{2r+\d-1}(f(x_{k})-f^*)< +\infty$ and finally  $\sum_{k\ge 1} k^{2r-1-p}\|x_k\|^2<+\infty$. Hence,
\begin{align}\label{foroest1}
&\sum_{k\ge 1}\frac{1}{k}(\|ak^{r-1}(x_k-x^*)+k^{r}(x_{k+1}-x_{k}+\l k^{\d}u_{k+1})\|^2+\s_k\|x_k\|^2+\mu_k (f(x_{k})-f^*)+\nu_k\|x_k-x^*\|^2)\\
\nonumber&\le \sum_{k\ge 1}2a^2k^{2r-3}\|x_k-x^*\|^2+\sum_{k\ge 1}4 k^{2r-1}\|x_{k+1}-x_{k}\|^2+\sum_{k\ge 1}4 \l^2(k+1)^{2r+2\d-1}\|u_{k+1})\|^2\\
\nonumber&+C\left(\sum_{k\ge 1}k^{2r-p-1}\|x_k\|^2+\sum_{k\ge 1}k^{2r+\d-1} (f(x_{k})-f^*)+\sum_{k\ge 1}k^{2r-2-q}\|x_k-x^*\|^2\right)<+\infty,
 \end{align}
for some constant $C>0.$

Combining the facts that $\sum_{k\ge 1}\frac{1}{k}=+\infty$ and $\|u_n\|=o(n^{-r-\d})$ as $n\to +\infty$ with \eqref{foroest1} and \eqref{foroest} we get
$$\lim_{k\to+\infty}(\|ak^{r-1}(x_k-x^*)+k^{r}(x_{k+1}-x_{k}+\l k^\d u_{k+1})\|^2+\s_k\|x_k\|^2+\mu_k (f(x_{k})-f^*)+\nu_k\|x_k-x^*\|^2)=0$$
and the claim follows.
\end{proof}
\begin{remark}\label{comACR1} Note that our analysis also works in case $c=0$. In that case we do not have Tikhonov regularization, hence one does not have to impose any assumption on $p$ in the hypotheses of Theorem \ref{convergencealgorithm} and the conclusion of the theorem remains valid.
This also shows that the choice  $q=1$  in \cite{ACR1} is not optimal.
Indeed, in the case $0<q<1$, according to Theorem \ref{convergencealgorithm}, arbitrary large convergence rate for the potential energy $f(x_k)-\min_{\cH}f$ can be obtained, for a fixed inertial parameter $\a>0.$ Note that this result does not hold in case $q=1$ (see \cite{ACR1} or Theorem \ref{convergencealgorithm}), since in this case the inertial parameter $\a_k$ and the stepsize parameter $\l_k$ are correlated.  Let us discuss this aspect more detailed. In one hand, in \cite{ACR1} and also in  Theorem \ref{convergencealgorithm}, for the constellation $q=1,$ $\a>3$ and $\l_k=\mathcal{O}(k^\d)\,\mbox{ as }k\to+\infty,$ for $\d<\a-3$ is obtained the rate $f(x_k)-\min_{\cH}f=o(k^{-2-\d}),\,\mbox{ as }k\to+\infty.$ This means that for a fixed $\a$ one can obtain at most $f(x_k)-\min_{\cH}f=\mathcal{O}(k^{1-\a}),\,\mbox{ as }k\to+\infty.$ On the other hand, according to Theorem \ref{convergencealgorithm}, for  $0<q<1$ and  $q+1<p\le 2$  one has $f(x_k)-\min_{\cH} f=\mathcal{O}(k^{-q-\d-1}),\,\mbox{ as }k\to+\infty$, which indeed can be arbitrary large. Even more, as we emphasized before, our proof works also when $c=0$,  and of course then the assumption $q+1<p\le 2$  can be dropped.
\end{remark}

\subsection{On weak convergence and boundedness of the generated sequences}

In this section we provide sufficient conditions that assure that the sequence $(x_k)$ generated by the algorithm \eqref{algo} converges weakly to a minimizer of $f.$ In order to continue our analysis we need the following lemma, which is an extension of Lemma 8.3 from \cite{ACR1}.

\begin{lemma}\label{lforwconv} Assume that $(a_k)_{k\ge 1},\,(\omega_k)_{k\ge 1}$ are nonnegative real sequences that after an index $k_0$ satisfy
$$a_{k+1}\le \left(1-\frac{\a}{k^q}\right) a_k+\omega_k,\mbox{ for all }k\ge k_0,$$
where $q\in\left]0,1\right]$ and for $q=1$ one has $\a>1.$
Assume further, that $\sum_{k\ge k_0}k^q\omega_k<+\infty.$ Then,
$$\sum_{k\ge 1} a_k<+\infty.$$
\end{lemma}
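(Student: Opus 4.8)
The plan is to prove this by the classical discrete "variation of constants'' (integrating factor) technique built on the products of the contraction coefficients, since the factor $1-\frac{\a}{k^q}$ combined with the hypothesis $\sum_{k\ge k_0}k^q\omega_k<+\infty$ is exactly what is needed to keep the accumulated error summable. First I would fix an index $k_1\ge k_0$ large enough that $b_k:=1-\frac{\a}{k^q}\in(0,1)$ for all $k\ge k_1$, and introduce the products $P_k=\prod_{i=k_1}^{k-1}b_i$ (with $P_{k_1}=1$). Dividing the recurrence $a_{k+1}\le b_ka_k+\omega_k$ by $P_{k+1}=b_kP_k$ turns it into the telescoping inequality $\frac{a_{k+1}}{P_{k+1}}\le\frac{a_k}{P_k}+\frac{\omega_k}{P_{k+1}}$, whence
$$a_{k+1}\le a_{k_1}P_{k+1}+P_{k+1}\sum_{j=k_1}^{k}\frac{\omega_j}{P_{j+1}}.$$
Summing over $k$ and interchanging the order of summation in the double sum (all terms nonnegative, so Tonelli applies), the problem reduces to estimating, for each $j$, the tail sum $S_j:=\sum_{k\ge j}P_k$.

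The heart of the argument, and the only place where the dichotomy $\a>0$ (for $q<1$) versus $\a>1$ (for $q=1$) enters, is the tail estimate
$$S_j=\sum_{k\ge j}P_k\le C\,j^{q}P_j\quad\text{for all }j\ge k_1,$$
for a suitable constant $C>0$. I would prove it by setting $T_j:=S_j/P_j=\sum_{k\ge j}\prod_{i=j}^{k-1}b_i$, which satisfies the backward recursion $T_j=1+b_jT_{j+1}$, and then checking that $j\mapsto Cj^{q}$ is a supersolution: assuming $T_{j+1}\le C(j+1)^q$, the target $1+b_jC(j+1)^q\le Cj^q$ reduces, via the concavity bound $(j+1)^q-j^q\le q\,j^{q-1}$, to $1+Cq\,j^{q-1}\le \a C$. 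For $q<1$ the term $Cq\,j^{q-1}\to0$, so any $C>\frac1\a$ works once $j$ is large; for $q=1$ it becomes $1+C\le\a C$, which is solvable precisely because $\a>1$ (take $C\ge\frac{1}{\a-1}$). To make this rigorous I would run a downward induction on the finite truncations $T_j^{(N)}=\sum_{k=j}^{N}\prod_{i=j}^{k-1}b_i$ (base case $T_N^{(N)}=1\le CN^q$), obtain $T_j^{(N)}\le Cj^q$ uniformly in $N$, and then let $N\to+\infty$.

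With the tail estimate in hand the conclusion follows quickly. The first term contributes $a_{k_1}\sum_{k\ge k_1}P_{k+1}=a_{k_1}S_{k_1+1}<+\infty$, while the double sum becomes $\sum_{j\ge k_1}\frac{\omega_j}{P_{j+1}}S_{j+1}\le C\sum_{j\ge k_1}(j+1)^q\omega_j<+\infty$ by the tail estimate together with $\sum_k k^q\omega_k<+\infty$. Hence $\sum_{k\ge k_1}a_k<+\infty$, and adding back the finitely many terms $a_1,\dots,a_{k_1-1}$ gives $\sum_{k\ge1}a_k<+\infty$.

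I expect the main obstacle to be the tail estimate $S_j\le Cj^qP_j$: it is the sole step where the sharp threshold on $\a$ is exploited, and one must ensure that the supersolution constant $C$ and the starting index $k_1$ can be chosen uniformly, in particular independently of the truncation level $N$, so that the downward induction closes and the passage $N\to+\infty$ is legitimate. Everything else is routine telescoping and a Fubini-type rearrangement of nonnegative series.
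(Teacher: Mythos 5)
Your proof is correct, but it takes a genuinely different route from the paper's. The paper multiplies the recursion by the polynomial weight $k^q$ and observes that $k^q a_{k+1}-k^q a_k+\a a_k$ is, up to a small perturbation, the telescoping expression $k^q a_{k+1}-(k-1)^q a_k$ plus a positive multiple of $a_k$: for $q=1$ this is exact and leaves the coefficient $\a-1>0$ in front of $a_k$, while for $q<1$ the error $k^q-(k-1)^q=\mathcal{O}(k^{q-1})$ is eventually dominated by $\frac{\a}{2}$, so summing the telescoped inequality and discarding the boundary term $n^q a_{n+1}\ge 0$ gives $\sum_k a_k<+\infty$ directly. You instead use the exact integrating factor $1/P_k$ built from the products of the contraction coefficients, reduce everything to the tail estimate $\sum_{k\ge j}P_k\le C j^q P_j$, and establish that estimate by a supersolution/downward-induction argument in which the threshold $\a>1$ for $q=1$ (versus $\a>0$ for $q<1$) appears in exactly the same place as in the paper, namely through $1+Cqj^{q-1}\le\a C$. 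Your version is longer and requires the extra care you flag (uniformity of $C$ and $k_1$ in the truncation level $N$), but it is more systematic: it isolates the one quantitative fact about the products $P_k$ that drives the conclusion, and it would transfer verbatim to more general coefficient sequences $b_k$ for which $\sum_{k\ge j}\prod_{i=j}^{k-1}b_i$ admits a comparable bound. The paper's argument buys brevity: two lines of telescoping with the observation that the weight $k^q$ is, up to constants, exactly your tail sum $\sum_{k\ge j}P_k/P_j$ in disguise.
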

\begin{proof} We have
$k^qa_{k+1}-k^q a_k+\a a_k\le k^q\omega_k,\mbox{ for all }k\ge k_0.$
If $q=1$ then  $\a>1$ hence we have for all $k\ge k_0$ that
$k a_{k+1}-k a_k+\a a_k= ka_{k+1}-(k-1) a_k+(\a-1) a_k,$
consequently
 $$ka_{k+1}-(k-1) a_k+(\a-1) a_k\le k\omega_k,\mbox{ for all }k\ge k_0.$$
By summing up the latter relation from $k=k_0$ to $k=n>k_0$ we get
$$na_{n+1}+(\a-1)\sum_{k=k_0}^n a_k\le (k_0-1)a_{k_0}+\sum_{k=k_0}^n k\omega_k.$$
Now, we omit the term $na_{n+1}$ and we take the limit $n\to+\infty$ in order to show that
$$\sum_{k=k_0}^{+\infty} a_k\le \frac{(k_0-1)a_{k_0}}{\a-1}+\frac{1}{\a-1}\sum_{k=k_0}^{+\infty} k\omega_k<+\infty.$$
If $q<1$ then, since $\lim_{k\to+\infty}\frac{k^q-(k-1)^q}{k^{q-1}}=q>0,$
we conclude that there exists $C>0$ and $k_1\ge k_0$ such that $k^q-(k-1)^q\le C k^{q-1}$ for all $k\ge k_1.$

Hence, there exists $k_2\ge k_1$ such that for all $k\ge k_2$ one has
$$-k^q\ge -(k-1)^q-C k^{q-1}\ge -(k-1)^q-\frac{\a}{2}.$$

Consequently
$k^qa_{k+1}-(k-1)^q a_k+\frac{\a}{2} a_k\le k^q\omega_k,\mbox{ for all }k\ge k_2.$
By summing up the latter relation from $k=k_2$ to $k=n>k_2$ we get
$$n^q a_{n+1}+\frac{\a}{2}\sum_{k=k_2}^n a_k\le (k_2-1)^q a_{k_2}+\sum_{k=k_2}^n k^q\omega_k$$
and the conclusion follows.
\end{proof}
Now we can prove the weak convergence of the sequences generated by algorithm \eqref{algo} to a minimizer of the objective function $f.$

\begin{Theorem}\label{weakconvergencealgorithm}
Assume that  $\a>0,$ $0<q<1,0\le \d,\,q+1<p\le 2$ and for $p=2$ one has $c>q(1-q)$, or  $q=1$, $p>2$, $\a>3,\,0\le\d<\a-3.$  Then the sequence $(x_n)$ generated by \eqref{algo} converges weakly to a minimizer of $f.$
\end{Theorem}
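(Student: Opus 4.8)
The plan is to apply the discrete Opial lemma: I will show that for every $x^*\in\argmin f$ the limit $\lim_k\|x_k-x^*\|$ exists, and that every weak sequential cluster point of $(x_k)$ is a minimizer of $f$; these two facts force weak convergence of $(x_k)$ to a point of $\argmin f$. Throughout I would exploit the sum and rate estimates already furnished by Theorem \ref{convergencealgorithm}(i), choosing $r=\frac{q+1}{2}$ in the case $0<q<1$ and $r=1$ in the case $q=1$ (both admissible under the present hypotheses, since $q+1<p$ is strict and $\d<\a-3$ when $q=1$).

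First I would fix $x^*\in\argmin f$, set $\varphi_k=\demi\|x_k-x^*\|^2$, and derive a one-step inertial inequality. Writing $\varphi_{k+1}-\varphi_k=\demi\|x_{k+1}-x_k\|^2+\<x_{k+1}-x_k,x_k-x^*\>$ and substituting $x_{k+1}-x_k=\a_k(x_k-x_{k-1})-\l_k u_{k+1}-c_k x_k$ from \eqref{formx}, I would split the inner product into three pieces. The inertial piece gives $\a_k(\varphi_k-\varphi_{k-1})+\frac{\a_k}{2}\|x_k-x_{k-1}\|^2$; the subgradient piece, after writing $x_k-x^*=(x_k-x_{k+1})+(x_{k+1}-x^*)$ and using $\<u_{k+1},x_{k+1}-x^*\>\ge f(x_{k+1})-f^*\ge0$, is bounded above by $\l_k\|u_{k+1}\|\,\|x_{k+1}-x_k\|$; and the Tikhonov piece $-c_k\<x_k,x_k-x^*\>=-c_k\|x_k\|^2+c_k\<x_k,x^*\>$ is bounded above by $c_k\|x^*\|\,\|x_k\|$ after discarding $-c_k\|x_k\|^2\le0$. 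This yields
$$\varphi_{k+1}-\varphi_k\le\a_k(\varphi_k-\varphi_{k-1})+\omega_k,\quad \omega_k=\demi\|x_{k+1}-x_k\|^2+\frac{\a_k}{2}\|x_k-x_{k-1}\|^2+\l_k\|u_{k+1}\|\,\|x_{k+1}-x_k\|+c_k\|x^*\|\,\|x_k\|.$$

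The heart of the proof is then to show $\sum_k k^q[\omega_k]_+<+\infty$. Since $\a_k=1-\frac{\a}{k^q}\ge0$ for $k$ large, putting $\theta_k=[\varphi_k-\varphi_{k-1}]_+$ gives $\theta_{k+1}\le(1-\frac{\a}{k^q})\theta_k+[\omega_k]_+$, and then Lemma \ref{lforwconv} (whose requirement $\a>1$ in the case $q=1$ is ensured by $\a>3$) delivers $\sum_k\theta_k<+\infty$; as $\varphi_k\ge0$, a standard monotonicity argument forces $\lim_k\varphi_k$ to exist. To verify the summability of $k^q\omega_k$ I would treat the terms separately: the two velocity terms are absorbed by $\sum_k k\|x_k-x_{k-1}\|^2<+\infty$ together with $q\le1$; the cross term is handled by Cauchy--Schwarz, $\sum_k k^q\l_k\|u_{k+1}\|\,\|x_{k+1}-x_k\|\le(\sum_k k^{q+2\d+1}\|u_{k+1}\|^2)^{1/2}(\sum_k k^{q-1}\|x_{k+1}-x_k\|^2)^{1/2}$, both factors being finite by the estimates $\sum_k k^{q+2\d+1}\|u_k\|^2<+\infty$ and $\sum_k k\|x_k-x_{k-1}\|^2<+\infty$. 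The genuinely delicate term is the Tikhonov one, $\sum_k k^q c_k\|x^*\|\,\|x_k\|=c\|x^*\|\sum_k k^{q-p}\|x_k\|$: here I would use the growth bound $\|x_k\|=\mathcal{O}(k^{(p-2r)/2})$ from Theorem \ref{convergencealgorithm}, which produces a summand of order $k^{(q-p-1)/2}$ when $0<q<1$ and of order $k^{-p/2}$ when $q=1$, whose exponent is $<-1$ \emph{precisely because} $p>q+1$ (respectively $p>2$). This is where the standing hypothesis $q+1<p$ enters essentially, and I expect it to be the main obstacle of the argument.

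Finally I would close with Opial's lemma. The existence of $\lim_k\|x_k-x^*\|$ shows that $(x_k)$ is bounded, hence possesses weak cluster points; if $x_{k_j}\rightharpoonup\bar x$, then weak lower semicontinuity of the convex proper lsc function $f$ combined with $f(x_k)\to\min_{\cH}f$ (from the rate $f(x_k)-\min_{\cH}f=\mathcal{O}(k^{-q-\d-1})$ of Theorem \ref{convergencealgorithm}) gives $f(\bar x)\le\liminf_j f(x_{k_j})=\min_{\cH}f$, so $\bar x\in\argmin f$. Since both hypotheses of Opial's lemma are met, $(x_k)$ converges weakly to a minimizer of $f$.
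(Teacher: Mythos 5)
Your proposal is correct and follows essentially the same route as the paper: the Opial lemma combined with the anchor function $h_k=\frac12\|x_k-x^*\|^2$, the decomposition of $x_{k+1}-x_k$ via \eqref{formx}, Lemma \ref{lforwconv} applied to $[h_k-h_{k-1}]_+$, and the sum estimates of Theorem \ref{convergencealgorithm}(i). The only deviation is in the Tikhonov term, where the paper keeps the full identity $-2c_k\<x_k,x_k-x^*\>=c_k(\|x^*\|^2-\|x_k\|^2-\|x_k-x^*\|^2)\le c_k\|x^*\|^2$ and thus needs only $\sum_k k^{q-p}<+\infty$, while you discard $-c_k\|x_k\|^2$ and instead invoke the growth bound $\|x_k\|=\mathcal{O}(k^{(p-2r)/2})$ (which is established inside the proof of Theorem \ref{convergencealgorithm}, not in its statement); both estimates close under the standing hypothesis $p>q+1$.
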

\begin{proof} We use the Opial lemma (see \cite{opial}). To this purpose first we show that for all $x^*\in\argmin f$ the limit $\lim_{k\to+\infty}\|x_k-x^*\|$ exists. Let $x^*\in\argmin f$ and for all $k\ge 1$ consider the sequence $h_k=\frac12\|x_k-x^*\|^2.$ Then, by using \eqref{formx} we have
\begin{align}\label{forwconv}
h_{k+1}-h_k&=\frac12\|x_{k+1}-x_k\|^2+\<x_{k+1}-x_k,x_k-x^*\>\\
\nonumber&=\frac12\|x_{k+1}-x_k\|^2+\<\a_k(x_{k}-x_{k-1})-\l_k u_{k+1}-c_k x_k,x_k-x^*\>.
\end{align}
Further, one has
$$\<\a_k(x_{k}-x_{k-1}),x_k-x^*\>=\frac{\a_k}{2}(\|x_{k}-x_{k-1}\|^2+\|x_k-x^*\|^2-\|x_{k-1}-x^*\|^2),$$
$$\<-\l_k u_{k+1},x_k-x^*\>\le\frac{\l}{2}(k^{1-q}\|x_{k+1}-x_k\|^2+k^{q+2\d-1}\|u_{k+1}\|^2)$$
and
$$\<-c_k x_k,x_k-x^*\>=\frac{c_k}{2}(\|x^*\|^2-\|x_k\|^2-\|x_k-x^*\|^2).$$
Consequently, \eqref{forwconv} leads to
\begin{align}\label{forwconv1}
h_{k+1}-h_k&\le \a_k(h_k-h_{k-1})+\frac{\a_k}{2}\|x_{k}-x_{k-1}\|^2+\frac12\|x_{k+1}-x_k\|^2+\frac{\l}{2}k^{1-q}\|x_{k+1}-x_k\|^2\\
\nonumber&+\frac{\l}{2}k^{q+2\d-1}\|u_{k+1}\|^2+\frac{c_k}{2}\|x^*\|^2.
\end{align}

We use Lemma \ref{lforwconv} with $a_k=[h_k-h_{k-1}]_+$ and $\omega_k=\frac{\a_k}{2}\|x_{k}-x_{k-1}\|^2+\frac12\|x_{k+1}-x_k\|^2+\frac{\l}{2}k^{1-q}\|x_{k+1}-x_k\|^2+\frac{\l}{2}k^{q+2\d-1}\|u_{k+1}\|^2+\frac{c_k}{2}\|x^*\|^2.$ Hence, we need to show that $\sum_{k\ge 1}k^q\omega_k<+\infty.$

According to Theorem \ref{convergencealgorithm} (i) and the fact that $p>q+1$ we have
$$\sum_{k\ge 1}k\|x_{k}-x_{k-1}\|^2<+\infty,\,\sum_{k=1}^{+\infty} k^{q+2\d+1}\|u_k\|^2<+\infty\mbox{ and }\sum_{k\ge 1}k^q c_k=\sum_{k\ge 1}\frac{c}{k^{p-q}}<+\infty.$$
Now, it is obvious that $\sum_{k=1}^{+\infty} k^{2q+2\d-1}\|u_{k+1}\|^2<+\infty.$
Consequently, $\sum_{k\ge 1}k^q\omega_k<+\infty$ and by Lemma \ref{lforwconv} we get that
$$\sum_{k\ge 1}[h_k-h_{k-1}]_+<+\infty,$$
which shows that $\lim_{k\to+\infty}\|x_k-x^*\|$ exists.

Next we show that every weak sequential cluster point of $(x_k)$ belongs to $\argmin f.$
Indeed, let $x^*$ a weak sequential cluster point of $(x_k).$ Then there exists an increasing sequence of natural numbers $(k_n)$ with $k_n\to+\infty,\mbox{ as }n\to+\infty$, such that
$x_{k_n}\rightharpoonup x^*\mbox{ as }n\to+\infty,$ where "$\rightharpoonup$" denotes the convergence with respect of weak topology of $\cH.$ Since $f$ is convex and lower semicontinuous it is also lower semicontinuous with respect to the weak topology of $\cH$. Further, according to Theorem \ref{convergencealgorithm} one has $\lim_{n\to+\infty}f(x_{k_n})=\min_{\cH}f,$ hence
$$f(x^*)\le\liminf_{n\to+\infty}f(x_{k_n})=\min_{\cH}f,$$
which shows that $x^*\in\argmin f.$

Consequently, Opial's lemma yields that the sequence $(x_n)$  converges weakly to a minimizer of our objective function $f.$
\end{proof}
\begin{remark} Also here our analysis  remains valid in case $c=0$, hence in that case one may obtain the weak convergence of the sequences generated by Algorithm \eqref{algo} without any restriction imposed on the parameter $p.$
\end{remark}

According to Theorem \ref{weakconvergencealgorithm} in case   $\a>0,$ $0<q<1,0\le \d,\,q+1<p\le 2$ the sequence $(x_n)$ generated by \eqref{algo} is bounded. We show next that this result also holds in case  $1<p<q+1.$
\begin{Theorem}\label{boundedness}
Assume that  $\a>0,$ $0<q<1,0\le \d,\,1<p<q+1$.  Then the sequence $(x_n)$ generated by \eqref{algo} is bounded.
\end{Theorem}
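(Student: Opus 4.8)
The plan is to establish boundedness of $(x_k)$ when $1<p<q+1$ by constructing a Lyapunov-type energy functional analogous to the discrete energy $E_k$ used in the proof of Theorem \ref{convergencealgorithm}, and showing that its nonnegative quantities (in particular $\nu_{k-1}\|x_{k-1}-x^*\|^2$ or an anchor term of the form $\|x_k-x^*\|^2$) remain bounded. Since weak convergence (Theorem \ref{weakconvergencealgorithm}) is only available for $p>q+1$, in the present regime $1<p<q+1$ we cannot invoke the Opial argument, so the sequence $h_k=\tfrac12\|x_k-x^*\|^2$ must be controlled directly. First I would write the recursive inequality for $h_k$ exactly as in \eqref{forwconv}--\eqref{forwconv1} of the proof of Theorem \ref{weakconvergencealgorithm}, using the subdifferential inclusion \eqref{formx}, the three inner-product identities/estimates for the inertial, subgradient, and Tikhonov terms, and the definition $\a_k=1-\tfrac{\a}{k^q}$.

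The crux is that the analysis of Theorem \ref{convergencealgorithm} already furnishes, for $q+1\le p$, the summability $\sum_k k\|x_k-x_{k-1}\|^2<+\infty$ and the subgradient sum estimates, but these rely on $p\ge q+1$; in the regime $1<p<q+1$ one does not yet have such estimates. Therefore the key step is to run the same discrete-energy computation \eqref{discreteenergy}--\eqref{energi} with the parameter choice $r=\tfrac{q+1}{2}$, and observe that the coefficient-sign computations (N1)--(N4), together with \eqref{forspeed} and \eqref{signxk}, remain valid for $1<p<q+1$ provided one retains the forcing term $ac\|x^*\|^2 k^{2r-1-p}$. The second key step is to control the summability of this forcing term: with $r=\tfrac{q+1}{2}$ one has $2r-1-p=q-p$, and since $p>1$ while $q\le 1$, the exponent $q-p$ may exceed $-1$, so the series $\sum_k k^{q-p}$ need not converge. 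Thus, unlike the case $p\ge q+1$, the energy $E_k$ is not automatically bounded by a constant; instead I expect $E_k$ to grow at most like a power of $k$, and the task is to show that this controlled growth, combined with the coercivity coefficient $\nu_k\|x_k-x^*\|^2$ (which satisfies $\nu_k=\mathcal{O}(k^{2r-1-q})=\mathcal{O}(k^{-q})$ decaying), still yields boundedness of $\|x_k-x^*\|$.

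Because $\nu_k$ decays, the naive estimate $\nu_k\|x_k-x^*\|^2\le E_k$ is too weak, and this is the main obstacle: one cannot simply read off boundedness from $\nu_k\|x_k-x^*\|^2$ as was done for $p\ge q+1$. To circumvent this I would instead isolate the $h_k$-recursion and apply a discrete Gronwall-type argument. Concretely, from \eqref{forwconv1} one obtains an inequality of the form $[h_{k+1}-h_k]_+\le \a_k[h_k-h_{k-1}]_+ + \omega_k$ with the same $\omega_k$ as in Theorem \ref{weakconvergencealgorithm}, and the plan is to bound $\sum_k k^q\omega_k$ using the sum estimates available from Theorem \ref{convergencealgorithm}(ii) (valid for any $s\in]\tfrac12,\tfrac{q+1}{2}[$, which holds whenever $q+1\le p$ — but here $p<q+1$). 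Since this summability is precisely what fails for $1<p<q+1$, I anticipate that the genuine argument must instead run the full energy estimate, extract from \eqref{energi} the bound $E_{n+1}\le E_{K_0}+ac\|x^*\|^2\sum_{k=K_0}^{n}k^{q-p}$, and then bound $\|x_n-x^*\|^2$ by combining the anchor term $\nu_n\|x_n-x^*\|^2$ with the velocity and subgradient terms that were split off. The decisive technical point — and the step I expect to be hardest — is controlling the Tikhonov-induced growth of $E_k$ against the weak coercivity of $\nu_k$, which forces a more delicate telescoping or a rescaling of the energy by a factor absorbing the $k^{q-p}$ forcing, so that the resulting quantity is genuinely bounded and yields $\sup_k\|x_k-x^*\|<+\infty$.
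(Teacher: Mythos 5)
Your overall strategy --- run the discrete energy estimate of Theorem \ref{convergencealgorithm} in the regime $1<p<q+1$, accept that the forcing term $ac\|x^*\|^2k^{2r-1-p}$ is no longer summable, and conclude that $E_n$ grows at most polynomially --- is exactly the paper's starting point: the paper takes $\frac{p}{2}<r<\frac{q+1}{2}$, checks that the coefficients $n_k$ and $s_k$ are still eventually nonnegative (now of order $k^{2r-1-p}$), and sums \eqref{energi} to get $E_{n+1}\le C_0\,n^{2r-p}$. But your proposal stops short of the one observation that closes the argument, and both routes you actually sketch fail.

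The Gronwall route via $h_k=\tfrac12\|x_k-x^*\|^2$ and Lemma \ref{lforwconv} needs $\sum_k k^q\omega_k<+\infty$, hence $\sum_k k\|x_k-x_{k-1}\|^2<+\infty$ and the subgradient sum estimate, which --- as you note yourself --- are only available for $p\ge q+1$; so this route is dead. The anchor $\nu_n\|x_n-x^*\|^2$ cannot work either: $\nu_n$ is of exact order $n^{2r-1-q}$ (the paper computes $\lim\nu_k/k^{2r-1-q}=\a a>0$), while $E_{n+1}$ grows like $n^{2r-p}$, and since $p<q+1$ one has $2r-p>2r-1-q$; dividing gives only $\|x_n-x^*\|^2=\mathcal{O}(n^{q+1-p})$, which is unbounded. (Your intermediate claim $\nu_k=\mathcal{O}(k^{-q})$ is also off: with $r=\frac{q+1}{2}$ one gets $\nu_k=\mathcal{O}(1)$.) You flag this mismatch as the hardest step and leave it unresolved, so the proof is incomplete.

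The missing idea is that the correct anchor is the Tikhonov term $\sigma_n\|x_n\|^2$, not $\nu_n\|x_n-x^*\|^2$. By \eqref{signsigma}, $\sigma_n=\a_{n+1}b_{n+1}^2c_{n+1}\sim c\,n^{2r-p}$, i.e.\ its exact order coincides with the growth rate $\mathcal{O}(n^{2r-p})$ of the energy; this is precisely why the paper insists on $r>\frac{p}{2}$, so that $\sum_{k\le n}k^{2r-1-p}=\mathcal{O}(n^{2r-p})$ matches the order of $\sigma_n$. Hence $\|x_n\|^2\le E_{n+1}/\sigma_n=\mathcal{O}(1)$, and boundedness of $(x_n)$ follows at once from the plain telescoping of \eqref{energi} --- no rescaling of the energy and no control of $\|x_n-x^*\|$ via $\nu_n$ is needed.
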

\begin{proof} We use the energy functional and notations from the proof of Theorem \ref{convergencealgorithm} but we assume that  $\frac{p}{2}<r<\frac{q+1}{2}.$
Note that all the estimates from the proof of Theorem \ref{convergencealgorithm} concerning the coefficients $\mu_k,\nu_k,\s_k,\m_k,\eta_k$ remains valid.

Let us compute the order of $n_k.$
We have
$n_k=-(\a_k a_kb_k-a_kb_kc_k-a_{k-1}b_{k-1}-\a_{k+1}a_{k+1}b_{k+1}+a_{k}b_{k}),$ hence
\begin{align*}
n_k&=a((k+1)^{2r-1}+(k-1)^{2r-1}-2k^{2r-1})-a\a((k+1)^{2r-1-q}-k^{2r-1-q})+ack^{2r-1-p}\\
&=\mathcal{O}(k^{2r-3})+\mathcal{O}(k^{2r-2-q})+ack^{2r-1-p}.
\end{align*}
Consequently, $n_k>0$ for $k$ big enough and $n_k=\mathcal{O}(k^{2r-1-p})$ as $k\to+\infty.$

Further, we have $s_k=-(b_k^2c_k^2+\a_{k+1}b_{k+1}^2c_{k+1}-\a_kb_k^2c_k -a_kb_kc_k),$ hence
\begin{align*}
s_k&=ack^{2r-1-p}+c(k^{2r-p}-(k+1)^{2r-p})+\a c((k+1)^{2r-q-p}-k^{2r-q-p})+c^2k^{2r-2p}\\
&=ack^{2r-1-p}-c(2r-p)\mathcal{O}(k^{2r-1-p})-\a c(2r-q-p)\mathcal{O}(k^{2r-1-q-p})+\mathcal{O}(k^{2r-2p}).
\end{align*}
Since $a>2r+\d>2r-p$ we conclude that  $s_k>0$ for $k$ big enough and $s_k=\mathcal{O}(k^{2r-1-p})$ as $k\to+\infty.$
Consequently \eqref{energi} holds with these coefficients after an index $K_0$ big enough. By neglecting the nonegative term $m_k(f(x_{k})-f^*)+\eta_k\|x_k-x_{k-1}\|^2+b_{k-1}^2\l_{k-1}^2\|u_k\|^2+n_k\|x_k-x^*\|^2+s_k\|x_k\|^2$ in \eqref{energi} we get
\begin{align}\label{energibound}
E_{k+1}&-E_k\le ac\|x^*\|^2 k^{2r-1-p},\mbox{ for all }k\ge K_0.
\end{align}
By summing up \eqref{energibound} from $k=K_0$ to $k=n>K_0$, we obtain that
$$E_{n+1}\le ac\|x^*\|^2 \sum_{k=K_0}^n k^{2r-1-p}+E_{K_0},$$
and since $\sum_{k=K_0}^n k^{2r-1-p}=\mathcal{O}(n^{2r-p})$ as $n\to+\infty$ we conclude that there exists $C_0>0$ such that
$E_{n+1}\le C_0 n^{2r-p}.$ In particular we have $\s_n\|x_n\|^2\le C_0 n^{2r-p}$ and according to \eqref{signsigma} $\s(n)=\mathcal{O}(n^{2r-p})$, hence $x_n$ is bounded.
\end{proof}

\section{Convergence rates  and strong convergence results for the  case $p\le q+1$}
We continue the present section by emphasizing the main idea behind the Tikhonov regularization, which will assure strong convergence results for the sequence generated our algorithm \eqref{algo} to a minimizer of the objective function of minimal norm. By  $\ox_{k}$ we denote the unique solution of the strongly convex minimization problem
\begin{align*}
 \min_{x \in \mathcal{H}} \left( f(x) + \frac{c}{2k^p} \| x \|^2 \right).
\end{align*}
We know, (see for instance \cite{att-com1996}), that $\lim\limits_{k \to +\infty} \ox_{k}=x^\ast$, where $x^\ast = \argmin\limits_{x \in \argmin f} \| x \|$ is the minimal norm element from the set $\argmin f.$ Obviously, $\{x^*\}=\pr_{\argmin f} 0$ and we have the inequality $\| \ox_{k} \| \leq \| x^\ast \|$ (see \cite{BCL}).

Since $\ox_{k}$ is the unique minimum of the strongly convex function $f_k(x)=f(x)+\frac{c}{2k^p}\|x\|^2,$ obviously one has
\begin{equation}\label{fontos0}
\p f_k(\ox_{k})=\p f(\ox_{k})+\frac{c}{k^p}\ox_{k}\ni0.
\end{equation}
Further, Lemma \ref{lfosapen} c) leads to the following.
For every $p_1>p$ there exists $k_0\ge 1$ such that
\begin{equation}\label{lfos}
\left\|\ox_{k+1}-\ox_k\right\|\le\min\left(\frac{p_1}{k}\|\ox_{k}\|,\frac{p_1}{k+1}\|\ox_{k+1}\|\right)\mbox{ for every } k\ge k_0.
\end{equation}

Note that since $f_k$ is strongly convex, from the subgradient inequality we have
\begin{equation}\label{fontos2}
f_k(y)-f_k(x)\ge\<u_k,y-x\>+\frac{c}{2k^p}\|x-y\|^2,\mbox{ for all }x,y\in\mathcal{H}\mbox{ and }u_k\in\p f_k(x).
\end{equation}
In particular
\begin{equation}\label{fontos3}
f_k(x)-f_k(\ox_k)\ge\frac{c}{2k^p}\|x-\ox_k\|^2,\mbox{ for all }x\in\mathcal{H}.
\end{equation}

Finally, observe that for all $x,y\in\mathcal{H}$, one has

\begin{equation}\label{fontos5}
f(x)-f(y)=(f_k(x)-f_k(\ox_k))+(f_k(\ox_k)-f_k(y))+\frac{c}{2k^p}(\|y\|^2-\|x\|^2)\le f_k(x)-f_k(\ox_k)+\frac{c}{2k^p}\|y\|^2.
\end{equation}

\subsection{Convergence rates}

Concerning convergence rates for the function values, discrete velocity and subgradient even for this restrictive case we obtain some results that are comparable to the convergence rates obtained for the famous Nesterov algorithm \cite{Nest1}.

The main result of the present section is the following.

\begin{Theorem}\label{strconvergencerates}
Assume that $0<q< 1$, $1<p\le q+1$, $\l_k=\l k^\d,\,\l>0,\,\d\le 0$  and if $\d=0$ then $\l\in]0,1[.$ Let $(x_k)$ be a sequence generated by \eqref{algo}. For every $k\ge 2$ let us denote by $u_k$ the element  from $\p f(x_k)$ that satisfies \eqref{formx} with equality, i.e.,
$$x_{k}=\a_{k-1}(x_{k-1}-x_{k-2})-\l_{k-1} u_k +\left(1- c_{k-1}\right) x_{k-1}.$$  Then   the following  results are valid.
\begin{enumerate}
\item[(i)] If  $p<q+1$ then $(x_k)$ is bounded and
\[f(x_k)-\min_{\cH} f=\mathcal{O}(k^{-p-\d}),\,\|x_{k}-x_{k-1}\|=\mathcal{O}(k^{-\frac{p}{2}})\mbox{ and }\|u_k\|=\mathcal{O}(k^{-\frac{p}{2}-\d})\mbox{ as } k\to+\infty.\]
Further, for all $s\in\left]\frac12,\frac{p}{2}\right[$ one has
\[\ds\sum_{k=1}^{+\infty} k^{2s+\d-1}(f(x_k)-\min_{\cH} f)<+\infty,\,\ds\sum_{k=1}^{+\infty} k^{2s-q}\|x_k-x_{k-1}\|^2<+\infty\mbox{ and }\ds\sum_{k=2}^{+\infty} k^{2s+2\d}\|u_k\|^2<+\infty.\]
 Moreover, the following ergodic type convergence results hold.
$$\limsup_{n\to+\infty}\frac{\sum_{k=1}^n k^{q+\d}(f_{k-1}(x_{k-1})-f_{k-1}(\ox_{k-1}))}{n^{q+1-p}}<+\infty, \limsup_{n\to+\infty}\frac{\sum_{k=1}^n k\|x_{k}-x_{k-1}\|^2}{n^{q+1-p}}<+\infty$$
$$\mbox{ and } \limsup_{n\to+\infty}\frac{\sum_{k=1}^n k^{q+1+2\d}\|u_k\|^2}{n^{q+1-p}}<+\infty.$$
\item[(ii)] If  $p=q+1$ then
\[f(x_k)-\min_{\cH} f=\mathcal{O}(k^{-p-\d}\ln k),\,\|x_{k}-x_{k-1}\|=\mathcal{O}(k^{-\frac{p}{2}}\sqrt{\ln k})\mbox{ and }\|u_k\|=\mathcal{O}(k^{-\frac{p}{2}-\d}\sqrt{\ln k})\mbox{ as } k\to+\infty.\]
Further, for all $s\in\left]\frac12,\frac{p}{2}\right[$ one has
\[\ds\sum_{k=1}^{+\infty} k^{2s+\d-1}(f(x_k)-\min_{\cH} f)<+\infty,\,\ds\sum_{k=1}^{+\infty} k^{2s-q}\|x_k-x_{k-1}\|^2<+\infty\mbox{ and }\ds\sum_{k=2}^{+\infty} k^{2s+2\d}\|u_k\|^2<+\infty.\]
 Moreover, the following ergodic type convergence results hold.
$$\limsup_{n\to+\infty}\frac{\sum_{k=1}^n k^{q+\d}(f_{k-1}(x_{k-1})-f_{k-1}(\ox_{k-1}))}{\ln n}<+\infty, \limsup_{n\to+\infty}\frac{\sum_{k=1}^n k\|x_{k}-x_{k-1}\|^2}{\ln n}<+\infty$$
$$\mbox{ and } \limsup_{n\to+\infty}\frac{\sum_{k=1}^n k^{q+1+2\d}\|u_k\|^2}{\ln n}<+\infty.$$
Additionally, if $\d<0$ one has
$$\limsup_{n\to+\infty}\frac{\sum_{k=1}^n k^{q+\d}(f(x_{k-1})-\min_{\cH} f)}{\ln n}<+\infty.$$
\end{enumerate}
\end{Theorem}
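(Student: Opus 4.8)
The plan is to recycle the discrete energy $E_k$ and the entire telescoping computation from the proof of Theorem~\ref{convergencealgorithm}, keeping the anchor at a fixed $x^*\in\argmin f$, with $a_k=ak^{r-1}$, $b_k=k^r$ and the same auxiliary coefficients $\mu_k,\nu_k,\s_k,m_k,\eta_k,n_k,s_k$. The algebraic identity leading to the master inequality \eqref{energi}, namely
\[E_{k+1}-E_k+m_k(f(x_{k})-f^*)+\eta_k\|x_k-x_{k-1}\|^2+b_{k-1}^2\l_{k-1}^2\|u_k\|^2+n_k\|x_k-x^*\|^2+s_k\|x_k\|^2\le ac\|x^*\|^2 k^{2r-1-p},\]
uses neither the sign of $\d$ nor the position of $p$ relative to $q+1$; only the nonnegativity of the coefficients does. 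Hence the first task is to re-establish, in the present regime $1<p\le q+1$, $\d\le0$ (and $\l\in]0,1[$ when $\d=0$), that $\mu_k,\nu_k,\s_k,\eta_k\ge0$ for large $k$ (these persist, since \eqref{signmu}, \eqref{signnu}, \eqref{signsigma} and \eqref{forspeed} were proved for every $r\in(\frac12,\frac{q+1}2]$ and every $p>1$), that $m_k\ge0$ (the convexity argument of \eqref{signm} is replaced by the mean value theorem when $0<2r+\d<1$, the conclusion being unchanged), and that $n_k,s_k\ge0$ with $n_k,s_k=\mathcal{O}(k^{2r-1-p})$, which is exactly the dominant–term computation already carried out in the proof of Theorem~\ref{boundedness} for $1<p<q+1$ and which extends to $p=q+1$ verbatim.

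The theorem then follows by summing \eqref{energi} with three different choices of $r$. First, for each $s\in]\frac12,\frac p2[$ take $r=s$: then $2s-1-p<-1$, the right-hand side is summable, $E_n$ stays bounded, and summing the good terms gives the three series estimates $\sum k^{2s+\d-1}(f(x_k)-f^*)<+\infty$, $\sum k^{2s-q}\|x_k-x_{k-1}\|^2<+\infty$ and $\sum k^{2s+2\d}\|u_k\|^2<+\infty$, valid in both (i) and (ii). Second, for the pointwise rates and boundedness in (i), pick $r\in]\frac p2,\frac{q+1}2[$ (nonempty exactly because $p<q+1$); now $\sum_k k^{2r-1-p}=\mathcal{O}(n^{2r-p})$, so $E_{n+1}=\mathcal{O}(n^{2r-p})$. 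Reading off the terms of $E_{n+1}$ yields $\s_n\|x_n\|^2=\mathcal{O}(n^{2r-p})$, whence $\|x_n\|=\mathcal{O}(1)$ by \eqref{signsigma} (this is the boundedness, as in Theorem~\ref{boundedness}), and $\mu_n(f(x_n)-f^*)=\mathcal{O}(n^{2r-p})$, whence $f(x_n)-f^*=\mathcal{O}(n^{-p-\d})$ because the $n^{2r}$ factors cancel. Using $\|x_n-x^*\|=\mathcal{O}(1)$, $a_n=\mathcal{O}(n^{r-p/2})$ (as $p/2<1$) and $b_nc_n\|x_n\|=\mathcal{O}(n^{r-p})$, the quadratic parts of $E_n$ and $E_{n+1}$ together with \eqref{formx} disentangle the discrete velocity from the subgradient, giving $\|x_n-x_{n-1}\|=\mathcal{O}(n^{-p/2})$ and then $\|u_n\|=\mathcal{O}(n^{-p/2-\d})$.

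For the ergodic statements take the endpoint $r=\frac{q+1}2$: the right-hand side of \eqref{energi} is $\mathcal{O}(k^{q-p})$, so $E_{n+1}=\mathcal{O}(n^{q+1-p})$, and the summed good terms, whose weights are $m_k=\mathcal{O}(k^{q+\d})$, $\eta_k=\mathcal{O}(k)$ and $b_{k-1}^2\l_{k-1}^2=\mathcal{O}(k^{q+1+2\d})$, are all $\mathcal{O}(n^{q+1-p})$; this gives the three $\limsup$ estimates of (i), but phrased with $f(x_{k-1})-f^*$ in place of $f_{k-1}(x_{k-1})-f_{k-1}(\ox_{k-1})$. To pass to the regularized gap I invoke \eqref{fontos5}: since $f_{k-1}(x_{k-1})-f_{k-1}(\ox_{k-1})\le(f(x_{k-1})-f^*)+\frac{c}{2(k-1)^p}\|x_{k-1}\|^2$ and $(x_k)$ is bounded, the extra contribution is $\mathcal{O}(k^{q+\d-p})$, whose partial sums are $\mathcal{O}(n^{q+1-p})$ precisely because $\d\le0$ (this is where the sign of $\d$ is used). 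Part (ii), $p=q+1$, is the borderline case: the window $]\frac p2,\frac{q+1}2[$ collapses to the single point $r=\frac p2=\frac{q+1}2$, the right-hand side of \eqref{energi} becomes $\mathcal{O}(k^{-1})$, and summation gives $E_{n+1}=\mathcal{O}(\ln n)$; dividing by $\mu_n,b_n^2,\dots$ inserts the advertised $\ln n$ and $\sqrt{\ln n}$ factors into the rates, while the ergodic denominators become $\ln n$.

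The main obstacle I anticipate is the sign bookkeeping for $n_k$ and $s_k$ once $p\le q+1$: in Theorem~\ref{convergencealgorithm} the chain $1<2r\le q+1\le p$ was used repeatedly to identify the dominant powers, and reversing $p$ and $q+1$ changes which monomial dominates, so the asymptotic expansions of $n_k$ and $s_k$ must be reworked (this is essentially what the proof of Theorem~\ref{boundedness} does, and I would follow it). The second delicate point is locating exactly where the hypotheses $\d\le0$ and $\l<1$ (for $\d=0$) are indispensable — most plausibly in keeping $m_k$, $n_k$ or $s_k$ nonnegative once $\l_k$ no longer grows, or in controlling the $\ox_k$–variation cross terms via \eqref{lfos} — and checking, in the borderline case $p=q+1$ where every quantity is only logarithmically summable, that the $\frac{c}{2k^p}\|x_k\|^2$ correction in the $f$-to-$f_k$ conversion does not spoil the $\ln n$ normalization.
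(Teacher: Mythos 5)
Your plan is correct in substance but follows a genuinely different route from the paper. The paper does \emph{not} recycle the fixed-anchor energy of Theorem \ref{convergencealgorithm}: it builds a new Lyapunov function anchored at the \emph{moving} Tikhonov minimizers $\ox_k$, with the regularized gap $f_{k-1}(x_{k-1})-f_{k-1}(\ox_{k-1})$ as potential term. That choice forces all the machinery you avoid --- the cross terms $\|x_k-\ox_{k-1}\|^2$ and $\|x_{k-1}-\ox_k\|^2$ handled through \eqref{interm1}--\eqref{interm4}, the remainder $S_k$ controlled via Lemma \ref{lfosapen}\,c), and the extra coefficient $t_k$ --- and it is precisely in keeping $\s_k$ and $t_k$ nonnegative (\eqref{ordsigma}, \eqref{ordt}) that the hypotheses $\d\le 0$ and $\l\in]0,1[$ for $\d=0$ are consumed, which answers your question about where they are indispensable. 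The payoff is that the same moving-anchor energy is reused verbatim in Theorems \ref{strconvergencealgorithm} and \ref{strconvergencelambda1}, where $\mu_n(f_n(x_n)-f_n(\ox_n))\le L$ combined with \eqref{fontos3} yields $\|x_n-\ox_n\|\to0$; your fixed-anchor energy cannot produce that, but Theorem \ref{strconvergencerates} does not claim it, so for this theorem your route is legitimate and in fact lighter. Your coefficient bookkeeping is right: $\mu_k,\nu_k,\s_k,\eta_k$ persist for all $r\in\left(\frac12,\frac{q+1}{2}\right]$, $m_k\ge0$ follows from the mean value theorem whenever $a>2r+\d$, and $n_k,\,s_k$ are positive of exact order $k^{2r-1-p}$ just as in the proof of Theorem \ref{boundedness}, the computation extending to $p=q+1$ and $\d\le0$.

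The one place where your plan, as written, does not close is the regularized-gap ergodic estimate of part (ii) when $\d=0$. The conversion $f_{k-1}(x_{k-1})-f_{k-1}(\ox_{k-1})\le (f(x_{k-1})-\min_{\cH}f)+\frac{c}{2(k-1)^p}\|x_{k-1}\|^2$ is correct, but for $p=q+1$ boundedness of $(x_k)$ is not available (the energy bound only gives $\|x_k\|^2=\mathcal{O}(\ln k)$), so the correction $\sum_{k\le n}k^{q}(k-1)^{-p}\|x_{k-1}\|^2\approx\sum_{k\le n}k^{-1}\|x_{k-1}\|^2$ would naively be $\mathcal{O}((\ln n)^2)$, spoiling the $\ln n$ normalization --- exactly the worry you flag at the end. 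This is repairable inside your own framework: the summed master inequality \eqref{energi} already contains $\sum_{k\le n}s_k\|x_k\|^2=\mathcal{O}(\ln n)$ with $s_k$ of exact order $k^{2r-1-p}=k^{-1}$ at $r=\frac{q+1}{2}$, $p=q+1$, and this is precisely the sum you need. With that observation inserted the proposal delivers all stated conclusions, and as a small bonus yields the $\limsup$ of $\sum_{k=1}^n k^{q+\d}(f(x_{k-1})-\min_{\cH}f)/\ln n$ also for $\d=0$, which the paper obtains only for $\d<0$ because its route must convert in the opposite direction through the summability of $k^{q+\d-p}\|x^*\|^2$.
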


\begin{proof}
 Consider  first $a_k=ak^{u},\,b_k=k^{v},\, u,v\in\R,\,a>0,\,u+1\ge v\ge u+q$ and define, for every $k \geq 2$, the following discrete energy functional.

\begin{align}\label{Lyapunovstr}
E_k&=\mu_{k-1}(f_{k-1}(x_{k-1})-f_{k-1}(\ox_{k-1}))+ \| a_{k-1}(x_{k-1}-\ox_{k-1}) +b_{k-1} (x_k-x_{k-1}+\l_{k-1}u_k)\|^2\\
\nonumber&+\nu_{k-1}\|x_{k-1}-\ox_{k-1}\|^2+\sigma_{k-1}\|x_{k-1}\|^2,
\end{align}
where the sequences $(\mu_k)$, $(\nu_k)$ and $(\sigma_k)$ will be specified lather.
\vskip0.3cm
{\bf I. Lyapunov analysis}
\vskip0.3cm
All the following estimates hold after an index  $k$ big enough.  Now, if we denote $v_k= \| a_{k-1}(x_{k-1}-\ox_{k-1}) +b_{k-1} (x_k-x_{k-1}+\l_{k-1}u_k)\|^2$ then proceeding as in the proof of Theorem \eqref{convergencealgorithm} we obtain
\begin{align}\label{forEk1str}
v_k=&a_{k-1}b_{k-1}\|x_k-\ox_{k-1}\|^2+(a_{k-1}^2-a_{k-1}b_{k-1})\|x_{k-1}-\ox_{k-1}\|^2+(b_{k-1}^2-a_{k-1}b_{k-1})\|x_k-x_{k-1}\|^2\\
\nonumber&+b_{k-1}^2\l_{k-1}^2\|u_k\|^2+2a_{k-1}b_{k-1}\l_{k-1}\<u_k,x_{k}-\ox_{k-1}\>\\
\nonumber&+(2b_{k-1}^2-2a_{k-1}b_{k-1})\l_{k-1}\<u_k,x_k-x_{k-1}\>.
\end{align}

Further, from (\ref{discreteenergy}) we have
$$v_{k+1}=\|a_{k}(x_{k}-\ox_k)+b_{k}(\a_k(x_k-x_{k-1})-c_k x_k)\|^2.$$
Therefore, after development we get
\begin{align}\label{forEkstr+1}
v_{k+1}=&a_k^2\|x_k-\ox_k\|^2+\a_k^2b_k^2\|x_k-x_{k-1}\|^2+b_k^2c_k^2\|x_k\|^2+2\a_k a_kb_k\<x_k-x_{k-1},x_k-\ox_k\> \\
\nonumber&-2\a_kb_k^2c_k\<x_k-x_{k-1},x_k\>-2a_kb_kc_k\<x_k,x_k-\ox_k\>.
\end{align}
Further,
\begin{eqnarray*}
&&2\a_k a_kb_k\<x_k-x_{k-1},x_k-\ox_k\>=-\a_k a_kb_k(\|x_{k-1}-\ox_k\|-\|x_k-x_{k-1}\|^2-\|x_k-\ox_k\|^2) \\
&&-2\a_kb_k^2c_k\<x_k-x_{k-1},x_k\>=\a_kb_k^2c_k(\|x_{k-1}\|^2-\|x_k-x_{k-1}\|^2-\|x_k\|^2)
\\
&&-2a_kb_kc_k\<x_k,x_k-\ox_k\>=a_kb_kc_k(\|\ox_k\|^2-\|x_k-\ox_k\|^2-\|x_k\|^2).
\end{eqnarray*}
Hence, (\ref{forEkstr+1}) yields
\begin{align}\label{forEkstr+11}
v_{k+1}&=(a_k^2+\a_k a_kb_k-a_kb_kc_k)\|x_k-\ox_k\|^2-\a_k a_kb_k\|x_{k-1}-\ox_k\|^2
\\
\nonumber& +(\a_k^2b_k^2+\a_k a_kb_k-\a_kb_k^2c_k)\|x_k-x_{k-1}\|^2+(b_k^2c_k^2-\a_kb_k^2c_k -a_kb_kc_k)\|x_k\|^2\\
\nonumber&+\a_kb_k^2c_k\|x_{k-1}\|^2+a_kb_kc_k\|\ox_k\|^2.
\end{align}
Consequently, one has
\begin{align}\label{forEkstrdecay}
v_{k+1}-v_k&=(a_k^2+\a_k a_kb_k-a_kb_kc_k)\|x_k-\ox_k\|^2-a_{k-1}b_{k-1}\|x_k-\ox_{k-1}\|^2\\
\nonumber&-\a_k a_kb_k\|x_{k-1}-\ox_k\|^2-(a_{k-1}^2-a_{k-1}b_{k-1})\|x_{k-1}-\ox_{k-1}\|^2
\\
\nonumber& +(\a_k^2b_k^2+\a_k a_kb_k-\a_kb_k^2c_k-b_{k-1}^2+a_{k-1}b_{k-1})\|x_k-x_{k-1}\|^2\\
\nonumber&+(b_k^2c_k^2-\a_kb_k^2c_k -a_kb_kc_k)\|x_k\|^2+\a_kb_k^2c_k\|x_{k-1}\|^2-b_{k-1}^2\l_{k-1}^2\|u_k\|^2\\
\nonumber&+(2b_{k-1}^2-2a_{k-1}b_{k-1})\l_{k-1}\<u_k,x_{k-1}-x_k\>\\
\nonumber&
+2a_{k-1}b_{k-1}\l_{k-1}\<u_k,\ox_{k-1}-x_{k}\>+a_kb_kc_k\|\ox_k\|^2.
\end{align}

Now, by using the sub-gradient inequality we get
\begin{align}\label{fforEK3}
(2b_{k-1}^2&-2a_{k-1}b_{k-1})\l_{k-1}\<u_k,x_{k-1}-x_k\>+2a_{k-1}b_{k-1}\l_{k-1}\<u_k,\ox_{k-1}-x_{k}\>\\
\nonumber\le& -2b_{k-1}^2\l_{k-1}f(x_k)+(2b_{k-1}^2-2a_{k-1}b_{k-1})\l_{k-1}f(x_{k-1})+2a_{k-1}b_{k-1}\l_{k-1}f(\ox_{k-1})\\
\nonumber=& -2b_{k-1}^2\l_{k-1}(f_k(x_k)-f_k(\ox_k))+2b_{k-2}^2\l_{k-2}(f_{k-1}(x_{k-1})-f_{k-1}(\ox_{k-1}))\\
\nonumber&+[(2b_{k-1}^2-2a_{k-1}b_{k-1})\l_{k-1}-2b_{k-2}^2\l_{k-2}](f_{k-1}(x_{k-1})-f_{k-1}(\ox_{k-1}))\\
\nonumber&+2b_{k-1}^2\l_{k-1}(f_{k-1}(\ox_{k-1})-f_k(\ox_{k}))\\
\nonumber&+b_{k-1}^2\l_{k-1}c_k\|x_k\|^2+(a_{k-1}b_{k-1}\l_{k-1}c_{k-1}-b_{k-1}^2\l_{k-1}c_{k-1})\|x_{k-1}\|^2\\
\nonumber&-a_{k-1}b_{k-1}\l_{k-1}c_{k-1}\|\ox_{k-1}\|^2.
\end{align}
Further, according to \eqref{fontos3} one has $f_{k-1}(\ox_{k})-f_{k-1}(\ox_{k-1})\ge\frac{c_{k-1}}{2}\|\ox_k-\ox_{k-1}\|^2$ hence
\begin{align*}
2b_{k-1}^2\l_{k-1}(f_{k-1}(\ox_{k-1})-f_k(\ox_{k}))&=2b_{k-1}^2\l_{k-1}\left(f_{k-1}(\ox_{k-1})-f_{k-1}(\ox_{k})+\frac{c_{k-1}-c_k}{2}\|\ox_{k}\|^2\right)\\
&\le 2b_{k-1}^2\l_{k-1}\left(-\frac{c_{k-1}}{2}\|\ox_k-\ox_{k-1}\|^2+\frac{c_{k-1}-c_k}{2}\|\ox_{k}\|^2\right)
\end{align*}
hence \eqref{fforEK3} becomes
\begin{align}\label{fforEK4}
(2b_{k-1}^2&-2a_{k-1}b_{k-1})\l_{k-1}\<u_k,x_{k-1}-x_k\>+2a_{k-1}b_{k-1}\l_{k-1}\<u_k,\ox_{k-1}-x_{k}\>\\
\nonumber\le&  -2b_{k-1}^2\l_{k-1}(f_k(x_k)-f_k(\ox_k))+2b_{k-2}^2\l_{k-2}(f_{k-1}(x_{k-1})-f_{k-1}(\ox_{k-1}))\\
\nonumber&+[(2b_{k-1}^2-2a_{k-1}b_{k-1})\l_{k-1}-2b_{k-2}^2\l_{k-2}](f_{k-1}(x_{k-1})-f_{k-1}(\ox_{k-1}))\\
\nonumber&+b_{k-1}^2\l_{k-1}c_k\|x_k\|^2+(a_{k-1}b_{k-1}\l_{k-1}c_{k-1}-b_{k-1}^2\l_{k-1}c_{k-1})\|x_{k-1}\|^2\\
\nonumber&+b_{k-1}^2\l_{k-1}(c_{k-1}-c_k)\|\ox_{k}\|^2-a_{k-1}b_{k-1}\l_{k-1}c_{k-1}\|\ox_{k-1}\|^2-b_{k-1}^2\l_{k-1}c_{k-1}\|\ox_k-\ox_{k-1}\|^2.
\end{align}

Combining \eqref{forEkstrdecay} and \eqref{fforEK4} we get
\begin{align}\label{forEkstrdecay1}
&v_{k+1}-v_k   +2b_{k-1}^2\l_{k-1}(f_k(x_k)-f_k(\ox_k))-2b_{k-2}^2\l_{k-2}(f_{k-1}(x_{k-1})-f_{k-1}(\ox_{k-1}))\\
\nonumber&-[(2b_{k-1}^2-2a_{k-1}b_{k-1})\l_{k-1}-2b_{k-2}^2\l_{k-2}](f_{k-1}(x_{k-1})-f_{k-1}(\ox_{k-1}))+b_{k-1}^2\l_{k-1}^2\|u_k\|^2\\
\nonumber&\le (a_k^2+\a_k a_kb_k-a_kb_kc_k)\|x_k-\ox_k\|^2-a_{k-1}b_{k-1}\|x_k-\ox_{k-1}\|^2\\
\nonumber&-\a_k a_kb_k\|x_{k-1}-\ox_k\|^2-(a_{k-1}^2-a_{k-1}b_{k-1})\|x_{k-1}-\ox_{k-1}\|^2\\
\nonumber& +(\a_k^2b_k^2+\a_k a_kb_k-\a_kb_k^2c_k-b_{k-1}^2+a_{k-1}b_{k-1})\|x_k-x_{k-1}\|^2\\
\nonumber&+(b_k^2c_k^2-\a_kb_k^2c_k -a_kb_kc_k+b_{k-1}^2\l_{k-1}c_k)\|x_k\|^2\\
\nonumber&+(a_{k-1}b_{k-1}\l_{k-1}c_{k-1}-b_{k-1}^2\l_{k-1}c_{k-1}+\a_kb_k^2c_k)\|x_{k-1}\|^2\\
\nonumber&+[b_{k-1}^2\l_{k-1}(c_{k-1}-c_k)+a_kb_kc_k]\|\ox_{k}\|^2-a_{k-1}b_{k-1}\l_{k-1}c_{k-1}\|\ox_{k-1}\|^2\\
\nonumber&-b_{k-1}^2\l_{k-1}c_{k-1}\|\ox_k-\ox_{k-1}\|^2.
\end{align}

We estimate in what follows the entities $-a_{k-1}b_{k-1}\|x_k-\ox_{k-1}\|^2$ and $-\a_k a_kb_k\|x_{k-1}-\ox_k\|^2.$
Using the straightforward inequality $\pm 2\<a,b\>\le \frac{1}{s}\|a\|^2+s\|b\|^2$ for all $s>0$  we obtain that
\begin{align}\label{interm1}
-a_{k-1}b_{k-1}\|x_k-\ox_{k-1}\|^2&=-a_{k-1}b_{k-1}\|(x_k-\ox_k)+(\ox_k-\ox_{k-1})\|^2=-a_{k-1}b_{k-1}\|x_k-\ox_{k}\|^2\\
\nonumber&-a_{k-1}b_{k-1}\|\ox_k-\ox_{k-1}\|^2-2a_{k-1}b_{k-1}\<x_k-x_{k-1},\ox_k-\ox_{k-1}\>\\
\nonumber&-2a_{k-1}b_{k-1}\<x_{k-1}-\ox_{k-1},\ox_k-\ox_{k-1}\>+2a_{k-1}b_{k-1}\<\ox_k-\ox_{k-1},\ox_k-\ox_{k-1}\>\\
\nonumber&\le -a_{k-1}b_{k-1}\|x_k-\ox_{k}\|^2+2 a_{k-1}b_{k-1}\|x_k-x_{k-1}\|^2\\
\nonumber&+ \left(1+\frac{1}{2}\right)a_{k-1}b_{k-1}\|\ox_k-\ox_{k-1}\|^2+2a_{k-1}b_{k-1}\<x_{k-1}-\ox_{k-1},\ox_{k-1}-\ox_k\>.
\end{align}
Further,
\begin{align}\label{interm2}
-\a_k a_kb_k\|x_{k-1}-\ox_k\|^2&=-\a_k a_kb_k\|x_{k-1}-\ox_{k-1}\|^2-\a_k a_kb_k\|\ox_{k-1}-\ox_k\|^2\\
\nonumber&-2\a_k a_kb_k\<x_{k-1}-\ox_{k-1},\ox_{k-1}-\ox_k\>,
\end{align}
and for $s_{k-1}=\frac{s}{(k-1)^{p-q}}$ with $s<\frac{c}{\a}$  one has
\begin{align}\label{interm3}
(2a_{k-1}b_{k-1}-2\a_k a_kb_k)&\<x_{k-1}-\ox_{k-1},\ox_{k-1}-\ox_k\>\le\\
\nonumber&(a_{k-1}b_{k-1}-\a_k a_kb_k)\left(s_{k-1}\|x_{k-1}-\ox_{k-1}\|^2+\frac{1}{s_{k-1}}\|\ox_{k-1}-\ox_k\|^2\right)
\end{align}

Now, combining  \eqref{interm1} and \eqref{interm2} and \eqref{interm3} it holds
\begin{align}\label{interm4}
&-a_{k-1}b_{k-1}\|x_k-\ox_{k-1}\|^2-\a_k a_kb_k\|x_{k-1}-\ox_k\|^2\le-a_{k-1}b_{k-1}\|x_k-\ox_{k}\|^2\\
\nonumber&+2a_{k-1}b_{k-1}\|x_k-x_{k-1}\|^2+(-\a_k a_kb_k+(a_{k-1}b_{k-1}-\a_k a_kb_k)s_{k-1})\|x_{k-1}-\ox_{k-1}\|^2\\
\nonumber&+\left(\left(1+\frac{1}{s_{k-1}}\right)(a_{k-1}b_{k-1}-\a_k a_kb_k)+\frac{a_{k-1}b_{k-1}}{2}\right)\|\ox_{k}-\ox_{k-1}\|^2.
\end{align}
Injecting \eqref{interm4} in \eqref{forEkstrdecay1} we get

\begin{align}\label{forEkstrdecay11}
&v_{k+1}-v_k   +2b_{k-1}^2\l_{k-1}(f_k(x_k)-f_k(\ox_k))-2b_{k-2}^2\l_{k-2}(f_{k-1}(x_{k-1})-f_{k-1}(\ox_{k-1}))\\
\nonumber&-[(2b_{k-1}^2-2a_{k-1}b_{k-1})\l_{k-1}-2b_{k-2}^2\l_{k-2}](f_{k-1}(x_{k-1})-f_{k-1}(\ox_{k-1}))+b_{k-1}^2\l_{k-1}^2\|u_k\|^2\\
\nonumber&\le (a_k^2+\a_k a_kb_k-a_kb_kc_k-a_{k-1}b_{k-1})\|x_k-\ox_{k}\|^2\\
\nonumber&+(-a_{k-1}^2+(1+s_{k-1})(a_{k-1}b_{k-1}-\a_k a_kb_k))\|x_{k-1}-\ox_{k-1}\|^2\\
\nonumber& +(\a_k^2b_k^2+\a_k a_kb_k-\a_kb_k^2c_k-b_{k-1}^2+3a_{k-1}b_{k-1})\|x_k-x_{k-1}\|^2\\
\nonumber&+(b_k^2c_k^2-\a_kb_k^2c_k -a_kb_kc_k+b_{k-1}^2\l_{k-1}c_k)\|x_k\|^2\\
\nonumber&+(a_{k-1}b_{k-1}\l_{k-1}c_{k-1}-b_{k-1}^2\l_{k-1}c_{k-1}+\a_kb_k^2c_k)\|x_{k-1}\|^2\\
\nonumber&+\left(b_{k-1}^2\l_{k-1}(c_{k-1}-c_k)+a_kb_kc_k\right)\|\ox_{k}\|^2-a_{k-1}b_{k-1}\l_{k-1}c_{k-1}\|\ox_{k-1}\|^2\\
\nonumber&+\left(\left(1+\frac{1}{s_{k-1}}\right)(a_{k-1}b_{k-1}-\a_k a_kb_k)+\frac{a_{k-1}b_{k-1}}{2}-b_{k-1}^2\l_{k-1}c_{k-1}\right)\|\ox_{k}-\ox_{k-1}\|^2.
\end{align}

Consider now $u=r-1,\,v=r$ and assume that $ a>1+q,\,r\in\left(\frac12, \frac{q+1}{2}\right]$.
Further, let
  $\mu_k=2b_{k-1}^2\l_{k-1}$, $\nu_k=-a_k^2-\a_k a_kb_k+a_kb_kc_k+a_{k-1}b_{k-1}$ and $\sigma_k=-b_k^2c_k^2+\a_kb_k^2c_k +a_kb_kc_k-b_{k-1}^2\l_{k-1}c_k$ for all $k\ge 1.$

Next  we show that all the sequences defined above are positive after an index $K_0$ big enough. For an easier readability we emphasize that by $h_k-\mathcal{O}(k^l)$ we understand the difference of a sequence $h_k$ and a positive sequence of order $\mathcal{O}(k^l)$ as $k\to+\infty.$ Similarly, by $h_k+\mathcal{O}(k^l)$ we understand the sum of a sequence $h_k$ and a positive sequence of order $\mathcal{O}(k^l)$ as $k\to+\infty.$  Further, by $s\mathcal{O}(k^l),\,s>0$ we understand the positive sequences $u_k$ that after an index satisfy $u_k\le s k^l.$ All the estimates bellow hold after an index $K_0$ big enough.

Obviously,  one has
\begin{align}\label{ordmu}
&\mu_k=2\l(k-1)^{2r+\d}>0\mbox{ and }\mu_k=\mathcal{O}(k^{2r+\d}).
\end{align}

Further, since $q<1<p$  one has
\begin{align}\label{ordnu}
\nu_k&=-a^2k^{2r-2}-\left(1-\frac{\a}{k^q}\right) ak^{2r-1}+ack^{2r-1-p}+a(k-1)^{2r-1}\\
\nonumber&=a\a k^{2r-1-q}-\mathcal{O}(k^{2r-2})+\mathcal{O}(k^{2r-1-p})>0\mbox{ and }\nu_k=\mathcal{O}(k^{2r-1-q}).
\end{align}

Now, since $\l_k=\l k^\d<1$, for $k$ big enough, i.e. $\d\le 0$ and $0<\l<1$ if $\d=0$,  one has
\begin{align}\label{ordsigma}
\sigma_k&=-c^2k^{2r-2p}+\left(1-\frac{\a}{k^q}\right)ck^{2r-p} +ack^{2r-1-p}-\l c(k-1)^{2r+\d}k^{-p}\\
\nonumber&=ck^{-p}(k^{2r}-\l(k-1)^{2r+\d})+ack^{2r-1-p} -\a ck^{2r-q-p}-c^2k^{2r-2p}\\
\nonumber&=ck^{-p}(k^{2r}-\l(k-1)^{2r+\d})+\mathcal{O}(k^{2r-1-p})-\mathcal{O}(k^{2r-q-p})-\mathcal{O}(k^{2r-2p})>0\mbox{ and }\s_k=\mathcal{O}(k^{2r-p}).
\end{align}

Consequently, $E_k\ge 0$ for all $k\ge K_0.$

In other words \eqref{forEkstrdecay11} can be written as
\begin{align}\label{forEkstrdecay111}
&E_{k+1}-E_k +(-\a_k^2b_k^2-\a_k a_kb_k+\a_kb_k^2c_k+b_{k-1}^2-3a_{k-1}b_{k-1})\|x_k-x_{k-1}\|^2 \\
\nonumber&+b_{k-1}^2\l_{k-1}^2\|u_k\|^2+(2a_{k-1}b_{k-1}\l_{k-1}+2b_{k-2}^2\l_{k-2}-2b_{k-1}^2\l_{k-1})(f_{k-1}(x_{k-1})-f_{k-1}(\ox_{k-1}))\\
\nonumber&+(-\a_{k-1} a_{k-1}b_{k-1}+a_{k-1}b_{k-1}c_{k-1}+a_{k-2}b_{k-2}-(1+s_{k-1})(a_{k-1}b_{k-1}-\a_k a_kb_k))\|x_{k-1}-\ox_{k-1}\|^2\\
\nonumber&+(-b_{k-1}^2c_{k-1}^2+\a_{k-1}b_{k-1}^2c_{k-1} +a_{k-1}b_{k-1}c_{k-1}-b_{k-2}^2\l_{k-2}c_{k-1}-a_{k-1}b_{k-1}\l_{k-1}c_{k-1}+\\
\nonumber&+b_{k-1}^2\l_{k-1}c_{k-1}-\a_kb_k^2c_k)\|x_{k-1}\|^2\\
\nonumber&\le\left(b_{k-1}^2\l_{k-1}(c_{k-1}-c_k)+a_kb_kc_k\right)\|\ox_{k}\|^2-a_{k-1}b_{k-1}\l_{k-1}c_{k-1}\|\ox_{k-1}\|^2\\
\nonumber&+\left(\left(1+\frac{1}{s_{k-1}}\right)(a_{k-1}b_{k-1}-\a_k a_kb_k)+\frac{a_{k-1}b_{k-1}}{2}-b_{k-1}^2\l_{k-1}c_{k-1}\right)\|\ox_{k}-\ox_{k-1}\|^2.
\end{align}

For simplicity, let us denote
\begin{align*}
&\xi_k=b_{k-1}^2\l_{k-1}^2\\
&m_k=2a_{k-1}b_{k-1}\l_{k-1}+2b_{k-2}^2\l_{k-2}-2b_{k-1}^2\l_{k-1}\\
&n_k=-\a_{k-1} a_{k-1}b_{k-1}+a_{k-1}b_{k-1}c_{k-1}+a_{k-2}b_{k-2}-(1+s_{k-1})(a_{k-1}b_{k-1}-\a_k a_kb_k)\\
&\eta_k=-\a_k^2b_k^2-\a_k a_kb_k+\a_kb_k^2c_k+b_{k-1}^2-3a_{k-1}b_{k-1}\\
&t_k=-b_{k-1}^2c_{k-1}^2+\a_{k-1}b_{k-1}^2c_{k-1} +a_{k-1}b_{k-1}c_{k-1}-b_{k-2}^2\l_{k-2}c_{k-1}-a_{k-1}b_{k-1}\l_{k-1}c_{k-1}+\\
&+b_{k-1}^2\l_{k-1}c_{k-1}-\a_kb_k^2c_k,
\end{align*}
and we show that all the sequences above are positive after an index $K_1\ge K_0$ big enough.

First one has
\begin{align}\label{ordxi}
\xi_k&=\l^2(k-1)^{2r+2\d}>0\mbox{ and }\xi_k=\mathcal{O}(k^{2r+2\d}).
\end{align}

Obviously, since  $a>1+q\ge 2r$ one has

\begin{align}\label{ordm}
m_k&=2a\l(k-1)^{2r-1+\d}+2\l((k-2)^{2r+\d}-(k-1)^{2r+\d})\\
\nonumber&=2a\l(k-1)^{2r-1+\d}-2\l(2r+\d)\mathcal{O}(k^{2r-1+\d})>0\mbox{ and }m_k=\mathcal{O}(k^{2r-1+\d}).
\end{align}
If $q<1$, $1+q>p>1$, by taking into account that  $(k-1)^{2r-1-q}-k^{2r-1-q}=0$ if $r=\frac{q+1}{2}$ and
 $(k-1)^{2r-1-q}-k^{2r-1-q}=\mathcal{O}(k^{2r-2-q})$ if $r<\frac{q+1}{2}$ and  $s<\frac{c}{\a}$ one has
\begin{align}\label{ordn}
n_k&=-\left(1-\frac{\a}{(k-1)^q}\right) a(k-1)^{2r-1}+ac(k-1)^{2r-1-p}+a(k-2)^{2r-1}\\
\nonumber&-\left(1+\frac{s}{(k-1)^{p-q}}\right)\left(a(k-1)^{2r-1}-\left(1-\frac{\a}{k^q}\right)ak^{2r-1}\right)\\
\nonumber&=ac(k-1)^{2r-1-p}+a((k-2)^{2r-1}+k^{2r-1}-2(k-1)^{2r-1})\\
\nonumber&-\frac{as}{(k-1)^{p-q}}((k-1)^{2r-1}-k^{2r-1}+\a k^{2r-1-q})+a\a((k-1)^{2r-1-q}-k^{2r-1-q})\\
\nonumber&=ac(k-1)^{2r-1-p}-\mathcal{O}(k^{2r-3})-as\a\mathcal{O}(k^{2r-1-p})+a \a\mathcal{O}((k-1)^{2r-1-q}-k^{2r-1-q})>0\\
\nonumber&\mbox{ and }n_k=\mathcal{O}(k^{2r-1-p}).
\end{align}
If $q<1$, $1+q=p$ then  $s_k=\frac{s}{k}$ and by taking into account that  $(k-1)^{2r-1-q}-k^{2r-1-q}=0,$ if $r=\frac{q+1}{2}$ and
 $(k-1)^{2r-1-q}-k^{2r-1-q}=(1+q-2r)\mathcal{O}(k^{2r-2-q})$ if $r<\frac{q+1}{2}$ and $s<\frac{c}{\a}$ one has
\begin{align}\label{ordnpq1}
n_k&=ac(k-1)^{2r-2-q}+a((k-2)^{2r-1}+k^{2r-1}-2(k-1)^{2r-1})\\
\nonumber&-\frac{as}{k-1}((k-1)^{2r-1}-k^{2r-1}+\a k^{2r-1-q})+a\a((k-1)^{2r-1-q}-k^{2r-1-q})\\
\nonumber&=ac(k-1)^{2r-2-q}-\frac{as\a}{k-1}k^{2r-1-q}+\mathcal{O}(k^{2r-3})-\mathcal{O}(k^{2r-3})\\
\nonumber&+a \a\mathcal{O}((k-1)^{2r-1-q}-k^{2r-1-q})>0\mbox{ and }n_k=\mathcal{O}(k^{2r-2-q}).
\end{align}

Concerning $\eta_k$, since  $p>1>q$ one has
\begin{align}\label{ordeta}
\eta_k&=-\left(1-\frac{\a}{k^q}\right)^2 k^{2r}-\left(1-\frac{\a}{k^q}\right)ak^{2r-1}+\left(1-\frac{\a}{k^q}\right)ck^{2r-p}+(k-1)^{2r}-3a(k-1)^{2r-1}\\
\nonumber&=2\a k^{2r-q}+((k-1)^{2r}-k^{2r})-\a^2k^{2r-2q} -ak^{2r-1}+\a ak^{2r-1-q}+\left(1-\frac{\a}{k^q}\right)ck^{2r-p}\\
\nonumber&-3a(k-1)^{2r-1}=2\a k^{2r-q}-\mathcal{O}(k^{2r-1})>0\mbox{ and }\eta_k=\mathcal{O}(k^{2r-q}).
\end{align}
Now, since $\l_k=\l k^\d\le 1$, for $k$ big enough, i.e. $\d\le 0$ and $0<\l< 1$ if $\d=0$, further $a>1+q\ge 2r$, hence $a>|2r-p|$ if $\d<0$ and if $\d=0$ then $a>\frac{(2r-p)-2\l r}{1-\l}$, one has
\begin{align}\label{ordt}
t_k&=-c^2(k-1)^{2r-2p}+\left(1-\frac{\a}{(k-1)^q}\right)c(k-1)^{2r-p}+ac(k-1)^{2r-1-p}-\l c(k-2)^{2r+\d}(k-1)^{-p}\\
\nonumber&-a\l c(k-1)^{2r-1+\d-p}+\l c(k-1)^{2r+\d-p}-\left(1-\frac{\a}{k^q}\right)ck^{2r-p}\\
\nonumber&=(ac(k-1)^{2r-1-p}-a\l c(k-1)^{2r-1+\d-p})+c((k-1)^{2r-p}-k^{2r-p})\\
\nonumber&+\l c(k-1)^{-p}((k-1)^{2r+\d}-(k-2)^{2r+\d})+\a c(k^{2r-q-p}-(k-1)^{2r-q-p})-c^2(k-1)^{2r-2p}\\
\nonumber&=(ac(k-1)^{2r-1-p}-a\l c(k-1)^{2r-1+\d-p})-c(2r-p)\mathcal{O}(k^{2r-1-p})+\l c(2r+\d)\mathcal{O}(k^{2r-1-p+\d})\\
\nonumber&-\mathcal{O}(k^{2r-q-1-p})-\mathcal{O}(k^{2r-2p})>0\mbox{ and }t_k=\mathcal{O}(k^{2r-1-p}).
\end{align}
\vskip0.3cm

Concerning the right hand side of \eqref{forEkstrdecay111}, in what follows we show that
$$\sum_{k=1}^{+\infty}\left(\left(1+\frac{1}{s_{k-1}}\right)(a_{k-1}b_{k-1}-\a_k a_kb_k)+\frac{a_{k-1}b_{k-1}}{2}\right)\|\ox_{k}-\ox_{k-1}\|^2< +\infty.$$
Let us denote
\begin{equation}\label{Sk}
S_k:=\left(\left(1+\frac{1}{s_{k-1}}\right)(a_{k-1}b_{k-1}-\a_k a_kb_k)+\frac{a_{k-1}b_{k-1}}{2}\right)\|\ox_{k}-\ox_{k-1}\|^2.
\end{equation}

Note that according to  \eqref{lfos} one has $\left\|\ox_{k}-\ox_{k-1}\right\|\le\frac{p_1}{k}\|\ox_{k}\|$ for some $p_1>p$ and all $k$ big enough. Further $\|\ox_{k}\|^2\le \|x^*\|^2$, hence we have
$$\left\|\ox_{k}-\ox_{k-1}\right\|^2\le\frac{p_1^2}{k^2}\|x^*\|^2.$$

Therefore, it is enough to show that  $\left(1+\frac{1}{s_{k-1}}\right)(a_{k-1}b_{k-1}-\a_k a_kb_k)+\frac{a_{k-1}b_{k-1}}{2}=\mathcal{O}(k^l)$ as $k\to+\infty$, with $l<1.$

Indeed,
\begin{align*}
&\left(1+\frac{1}{s_{k-1}}\right)(a_{k-1}b_{k-1}-\a_k a_kb_k)+\frac{a_{k-1}b_{k-1}}{2}=\left(1+\frac{(k-1)^{p-q}}{s}\right)\left(a(k-1)^{2r-1}-\left(1-\frac{\a}{k^q}\right)ak^{2r-1}\right)\\
&+\frac{a}{2}(k-1)^{2r-1}\le C_1(k-1)^{\max(2r-1-2q+p,2r-1)}.
\end{align*}
Observe that by assumption $q<1$ and $2r\le q+1$ if $p<q+1$, hence  one can take $l=\max(2r-1-q+s,2r-1)<1$ and we obtain that $(S_k)$ is summable.

Further,  for $p=q+1$  if $2r<q+1$ we obtain that $l=\max(2r-1-2q+p,2r-1)<1$, so also in this case $(S_k)$ is summable.

However, in case $p=q+1$ and $2r=q+1$ one has $l=1$, hence $S_k=\mathcal{O}(k^{-1}).$

Now, since $\|\ox_{k}\|^2\le\|x^*\|^2$ and
\begin{align*}
b_{k-1}^2\l_{k-1}(c_{k-1}-c_k)+a_kb_kc_k&=c\l(k-1)^{2r+\d}((k-1)^{-p}-k^{-p})+ack^{2r-1-p}\\
&=\mathcal{O}(k^{2r-1-p}),
\end{align*}
the right hand side of \eqref{forEkstrdecayf} leads to
\begin{align*}
  &\left(b_{k-1}^2\l_{k-1}(c_{k-1}-c_k)+a_kb_kc_k\right)\|\ox_{k}\|^2-a_{k-1}b_{k-1}\l_{k-1}c_{k-1}\|\ox_{k-1}\|^2\\
  &-b_{k-1}^2\l_{k-1}c_{k-1}\|\ox_k-\ox_{k-1}\|^2+S_k\le C_2 k^{2r-1-p}+S_k\mbox{ for some }C_2>0.
\end{align*}

Consequently, \eqref{forEkstrdecay111} leads to

\begin{align}\label{forEkstrdecayf}
&E_{k+1}-E_k +\xi_k\|u_k\|^2+m_k(f_{k-1}(x_{k-1})-f_{k-1}(\ox_{k-1}))+n_k\|x_{k-1}-\ox_{k-1}\|^2\\
\nonumber& +\eta_k\|x_k-x_{k-1}\|^2+t_k\|x_{k-1}\|^2 \le C_2k^{2r-1-p}+S_k\mbox{ for all }k\ge K_1.
\end{align}

Summing up \eqref{forEkstrdecayf} from $k=K_1$ to $k=n\ge K_1$ we obtain
\begin{align}\label{forEkstrdecayfi}
&E_{n+1}  +\sum_{k=K_1}^n \xi_k\|u_k\|^2+\sum_{k=K_1}^n m_k(f_{k-1}(x_{k-1})-f_{k-1}(\ox_{k-1}))+\sum_{k=K_1}^n n_k\|x_{k-1}-\ox_{k-1}\|^2\\
\nonumber& +\sum_{k=K_1}^n \eta_k\|x_k-x_{k-1}\|^2+\sum_{k=K_1}^nt_k\|x_{k-1}\|^2\le C_2\sum_{k=K_1}^n k^{2r-1-p}+\sum_{k=K_1}^n S_k+E_{K_1}\\
\nonumber&\le C_2\sum_{k=K_1}^n k^{2r-1-p}+C\mbox{ for some }C>0.
\end{align}
\vskip0.3cm
{\bf II. Rates}
\vskip0.3cm
In what follows $x^*$ denotes the element of minimum norm from the set $\argmin f.$

We treat first the case $p<q+1.$

Now, if $2r-1-p>-1$, that is $r\in\left(\frac{p}{2},\frac{q+1}{2}\right],$ it is obvious that $\sum_{k=K_1}^n k^{2r-1-p}\to+\infty\mbox{ as }n\to+\infty.$ However, easily can by seen that
$\sum_{k=K_1}^n k^{2r-1-p}=\mathcal{O}(n^{2r-p}).$

Hence, dividing \eqref{forEkstrdecayfi} with $n^{2r-p}$ we obtain at once that there exists $L>0$ such that $\frac{E_{n+1}}{n^{2r-p}}<L$, consequently
$$\frac{\mu_n}{n^{2r-p}}(f_n(x_n)-f_n(\ox_n))\le L\mbox{ and }\frac{\s_n}{n^{2r-p}}\|x_n\|^2\le L\mbox{ for all }n\ge K_1.$$
But according to \eqref{ordsigma} one has $\s_n=\mathcal{O}(n^{2r-p})$  consequently $(x_n)$ is bounded.

From \eqref{ordmu} we have $\mu_n=\mathcal{O}(n^{2r+\d})$,  hence
$$f_n(x_n)-f_n(\ox_n)=\mathcal{O}(n^{-p-\d}).$$
Consequently, for every $\rho<p+\d-1$ one has
$$\sum_{k=1}^{+\infty}k^\rho(f_k(x_k)-f_x(\ox_k))<+\infty.$$

Now, according to \eqref{fontos5} one has
$f(x_n)-f(x^*)\le f_n(x_n)-f_n(\ox_n)+\frac{c}{2n^p}\|x^*\|^2$ hence, since $\d\le0$ we obtain
$$f(x_n)-f(x^*)=\mathcal{O}(n^{-p-\d}).$$

Further, one has $\frac{v_{n+1}}{n^{2r-p}}<L$, hence
$$\frac{\| a_{n}(x_{n}-\ox_n)+b_{n}(\a_n(x_n-x_{n-1})-c_n x_n)\|^2}{n^{2r-p}}<L\mbox{ for all }n\ge K_1.$$
Consequently, $\|an^{\frac{p}{2}-1}(x_{n}-\ox_{n}) +n^{\frac{p}{2}} (\a_n(x_n-x_{n-1})-cn^{-p} x_n)\|^2$ is bounded.
But $(x_n)$ is bounded and $p<2$, hence $an^{\frac{p}{2}-1}(x_{n}-\ox_{n})\to0\mbox{ as }n\to+\infty$ and $-cn^{-\frac{p}{2}} x_n\to0\mbox{ as }n\to+\infty$, consequently $\|n^{\frac{p}{2}} \a_n(x_n-x_{n-1})\|^2$ is bounded. In other words
$$\|x_n-x_{n-1}\|^2=\mathcal{O}(n^{-p}).$$
Hence, for every $\rho<p-1$ one has
$$\sum_{k=1}^{+\infty}k^\rho\|x_k-x_{k-1}\|^2<+\infty.$$

Now, using the definition of $u_n$ we have $\l_{n-1} u_n =(x_{n}-x_{n-1})-\a_{n-1}(x_{n-1}-x_{n-2})+c_{n-1} x_{n-1}$ hence
$$\|\l(n-1)^\d u_n\|\le\|x_{n}-x_{n-1}\|+\a_{n-1}\|x_{n-1}-x_{n-2}\|+c_{n-1}\| x_{n-1}\|=\mathcal{O}(n^{-\frac{p}{2}}).$$

Consequently, $\|u_n\|^2=\mathcal{O}(n^{-p-2\d})$ and for every $\rho<p+2\d-1$ one has
$$\sum_{k=1}^{+\infty}k^\rho\|u_k\|^2<+\infty.$$

Further, by taking $r=\frac{q+1}{2}$ we obtain the following ergodic convergence results.

$$\limsup_{n\to+\infty}\frac{\sum_{k=1}^n m_k(f_{k-1}(x_{k-1})-f_{k-1}(\ox_{k-1}))}{n^{q+1-p}}<+\infty.$$
But according to \eqref{ordm} we have $m_k=\mathcal{O}(k^{q+\d})$, hence
$$\limsup_{n\to+\infty}\frac{\sum_{k=1}^n k^{q+\d}(f_{k-1}(x_{k-1})-f_{k-1}(\ox_{k-1}))}{n^{q+1-p}}<+\infty.$$
Similarly, according to \eqref{ordeta} one has $\eta_k=\mathcal{O}(k^{1})$, hence
$$\limsup_{n\to+\infty}\frac{\sum_{k=1}^n k\|x_{k}-x_{k-1}\|^2}{n^{q+1-p}}<+\infty.$$
Finally, according to \eqref{ordxi} one has $\xi_k=\mathcal{O}(k^{q+1+2\d})$, hence
$$\limsup_{n\to+\infty}\frac{\sum_{k=1}^n k^{q+1+2\d}\|u_k\|^2}{n^{q+1-p}}<+\infty.$$
\vskip0.3cm
Now, if $2r-1-p<-1$, that is $r\in\left(\frac12,\frac{p}{2}\right),$ then  the right hand side of \eqref{forEkstrdecayfi} is finite, hence there exists $C_3>0$ such that
\begin{align}\label{forEkstrdecayfi1}
&E_{n+1}  +\sum_{k=K_1}^n \xi_k\|u_k\|^2+\sum_{k=K_1}^n m_k(f_{k-1}(x_{k-1})-f_{k-1}(\ox_{k-1}))+\sum_{k=K_1}^n n_k\|x_{k-1}-\ox_{k-1}\|^2\\
\nonumber& +\sum_{k=K_1}^n \eta_k\|x_k-x_{k-1}\|^2+\sum_{k=K_1}^nt_k\|x_{k-1}\|^2\le C_2\sum_{k=K_1}^n k^{2r-1-p}+\sum_{k=K_1}^n S_k+E_{K_1}\le C_3.
\end{align}
From \eqref{forEkstrdecayfi1} by using \eqref{ordxi}, \eqref{ordm} and \eqref{ordeta} we obtain the estimates
$$\sum_{k=1}^{+\infty} k^{2r+2\d}\|u_k\|^2<+\infty,$$
$$\sum_{k=1}^{+\infty} k^{2r-1+\d}(f_{k-1}(x_{k-1})-f_{k-1}(\ox_{k-1}))<+\infty$$
and
$$\sum_{k=1}^{+\infty} k^{2r-q}\|x_{k}-x_{k-1}\|^2<+\infty.$$
But  according to \eqref{fontos5} we have
$f(x_{k-1})-f(x^*)\le f_{k-1}(x_{k-1})-f_{k-1}(\ox_{k-1})+\frac{c}{2k^p}\|x^*\|^2.$

Further $\sum_{k=1}^{+\infty} k^{2r-1+\d}\frac{c}{2 k^p}\|x^*\|^2<+\infty$ therefore
$$\sum_{k=1}^{+\infty} k^{2r-1+\d}(f(x_{k-1})-\min_{\cH}f)<+\infty.$$
\vskip0.5cm

In case $p=q+1$ we have seen earlier, that $S_k$ defined by \eqref{Sk} is summable provided  $2r<q+1.$ Further, for $2r=q+1$ one has $S_k=\mathcal{O}(k^{-1}).$ Consequently, the right hand side of \eqref{forEkstrdecayfi}, that is $C_2\sum_{k=K_1}^n k^{2r-1-p}+\sum_{k=K_1}^n S_k+E_{K_1}$ is finite for $2r<q+1$ and is of order $\mathcal{O}(k^{-1})$ for $2r=q+1.$

So assume first that $r\in\left(\frac12,\frac{q+1}{2}\right).$ Then  \eqref{forEkstrdecayfi} becomes:
\begin{align}\label{forEkstrdecayfipq1}
&E_{n+1}  +\sum_{k=K_1}^n \xi_k\|u_k\|^2+\sum_{k=K_1}^n m_k(f_{k-1}(x_{k-1})-f_{k-1}(\ox_{k-1}))+\sum_{k=K_1}^n n_k\|x_{k-1}-\ox_{k-1}\|^2\\
\nonumber& +\sum_{k=K_1}^n \eta_k\|x_k-x_{k-1}\|^2+\sum_{k=K_1}^nt_k\|x_{k-1}\|^2\le C,\mbox{ for some }C>0.
\end{align}
From \eqref{forEkstrdecayfipq1}, for all $r\in\left(\frac12,\frac{q+1}{2}\right)$ we obtain at once the following estimates:

$\sum_{k=1}^{+\infty}k^{2r+2\d}\|u_k\|^2<+\infty,$ $\sum_{k=1}^{+\infty}k^{2r-1+\d}(f_{k-1}(x_{k-1})-f_{k-1}(\ox_{k-1}))<+\infty$ and $\sum_{k=1}^{+\infty}k^{2r-q}\|x_k-x_{k-1}\|^2<+\infty.$

But  according to \eqref{fontos5} we have
$f(x_{k-1})-f(x^*)\le f_{k-1}(x_{k-1})-f_{k-1}(\ox_{k-1})+\frac{c}{2k^p}\|x^*\|^2.$

Further $\sum_{k=1}^{+\infty} k^{2r-1+\d}\frac{c}{2 k^p}\|x^*\|^2<+\infty,$ therefore
$\sum_{k=1}^{+\infty}k^{2r-1+\d}(f(x_{k-1})-\min_{\cH} f)<+\infty.$

Assume now that $r=\frac{q+1}{2}.$ Then \eqref{forEkstrdecayfi} becomes:
\begin{align}\label{forEkstrdecayfipq1str}
&E_{n+1}  +\sum_{k=K_1}^n \xi_k\|u_k\|^2+\sum_{k=K_1}^n m_k(f_{k-1}(x_{k-1})-f_{k-1}(\ox_{k-1}))+\sum_{k=K_1}^n n_k\|x_{k-1}-\ox_{k-1}\|^2\\
\nonumber& +\sum_{k=K_1}^n \eta_k\|x_k-x_{k-1}\|^2+\sum_{k=K_1}^nt_k\|x_{k-1}\|^2\le C\sum_{k=K_1}^n \frac{1}{k},\mbox{ for some }C>0.
\end{align}
But $\sum_{k=1}^n \frac{1}{k}=\mathcal{O}(\ln n)$, hence by dividing \eqref{forEkstrdecayfipq1str} with $\ln n$ we get  at once that there exists $L>0$ such that $\frac{E_{n+1}}{\ln n}<L$. Consequently by arguing analogously as in the case $p<q+1$ we have
$$f_n(x_n)-f_n(\ox_n)=\mathcal{O}(n^{-p-\d}\ln n)$$
and
$$f(x_n)-f(x^*)=\mathcal{O}(n^{-p-\d}\ln n).$$
Further, in this case $\nu_n=\mathcal{O}(1)$ and $\sigma_n=\mathcal{O}(1)$ hence $\frac{1}{\ln n}\|x_n-\ox_n\|^2<L$ and $\frac{1}{\ln n}\|x_n\|^2<L$. Combining the latter relations with the fact that $\frac{v_{n+1}}{\ln n}<L$ we obtain that
$$\|x_n-x_{n-1}\|^2=\mathcal{O}(n^{-p}\ln n).$$
Now, using the definition of $u_n$ we have
$$\|u_n\|^2=\mathcal{O}(n^{-p-2\d}\ln n).$$
Finally, also here the  following average convergence results hold.

$$\limsup_{n\to+\infty}\frac{\sum_{k=1}^n k^{q+\d}(f_{k-1}(x_{k-1})-f_{k-1}(\ox_{k-1}))}{\ln n}<+\infty,$$
$$\limsup_{n\to+\infty}\frac{\sum_{k=1}^n k\|x_{k}-x_{k-1}\|^2}{\ln n}<+\infty$$
and
$$\limsup_{n\to+\infty}\frac{\sum_{k=1}^n k^{q+1+2\d}\|u_k\|^2}{\ln n}<+\infty.$$

Also here, for $\d<0$ it holds $\sum_{k=1}^{+\infty} k^{q+\d}\frac{c}{2 k^p}\|x^*\|^2<+\infty$, hence according to \eqref{fontos5} one has
$$\limsup_{n\to+\infty}\frac{\sum_{k=1}^n k^{q+\d}(f(x_{k-1})-\min_{\cH} f)}{\ln n}<+\infty.$$
\end{proof}

\subsection{Strong convergence results}

Now, in order to show the strong convergence of the sequences generated by \eqref{algo} to an element of minimum norm of the nonempty, convex and closed set $\argmin f$, we state the following results.

\begin{Theorem}\label{strconvergencealgorithm}
  Assume that $0<q< 1$, $1<p< q+1$ and $\l_k=\l k^\d$ with $p-q-1<\d<0,\,\l>0$ or $\d=0$ and $\l\in]0,1[$. Let $(x_k)$ be a sequence generated by \eqref{algo}. Let $x^*$ be the minimal norm element from $\argmin f$. Then, $\liminf_{k\to+\infty}\|x_k-x^*\|=0$. Further, $(x_k)$ converges strongly to $x^*$ whenever $(x_k)$ is in the interior or the complement of the ball $B(0,\|x^*\|)$ for $k$ big enough.
\end{Theorem}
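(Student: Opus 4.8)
The plan is to build on the Lyapunov machinery already set up for Theorem \ref{strconvergencerates}, specialised to the critical exponent $r=\frac{q+1}{2}$, and to superimpose on it the minimal-norm selection mechanism carried by the regularization path $\ox_k$. Throughout I would use the three structural facts recalled just before the theorem: $\ox_k\to x^*$, the norm bound $\|\ox_k\|\le\|x^*\|$, and the estimate \eqref{lfos} for $\|\ox_{k}-\ox_{k-1}\|$. The decisive new ingredient is the strong monotonicity inequality read off from \eqref{fontos0}: since $-c_k\ox_k\in\p f(\ox_k)$ and $u_k\in\p f(x_k)$, monotonicity of $\p f$ gives $\<u_k+c_kx_k,x_k-\ox_k\>\ge c_k\|x_k-\ox_k\|^2$, i.e. a genuine restoring force of size $c_k$ pulling $x_k$ towards $\ox_k$.

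First I would prove $\liminf_{k\to+\infty}\|x_k-x^*\|=0$. Since $\ox_k\to x^*$ it suffices to show $\liminf_{k\to+\infty}\|x_k-\ox_k\|=0$, and I argue by contradiction, assuming $\|x_k-\ox_k\|\ge\e>0$ for all large $k$. Summing \eqref{forEkstrdecayf} with $r=\frac{q+1}{2}$, using $E_{n+1}\ge0$ and converting the potential gap via \eqref{fontos3} into $\frac{c_{k-1}}{2}\|x_{k-1}-\ox_{k-1}\|^2$, one controls a weighted series $\sum_k k^{q+\d-p}\|x_{k-1}-\ox_{k-1}\|^2$. The hypothesis $p-q-1<\d$ is exactly $q+\d-p>-1$, so this weight has divergent partial sums; under the contradiction assumption the left side would then grow like $\e^2\sum_k k^{q+\d-p}\to+\infty$, whereas the summed inequality is driven only by the source $a_kb_kc_k\|\ox_k\|^2\le ac\|x^*\|^2k^{q-p}$ and the summable quantity $S_k$ of \eqref{Sk}. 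I expect the main obstacle to lie precisely here: the restoring coefficient $n_k$ and the source coefficient $a_kb_kc_k$ are of the same order $k^{q-p}$ with comparable constants, so a crude comparison yields only $\e^2\lesssim\|x^*\|^2$. To force a genuine contradiction one must invoke the monotonicity inequality above to decouple the restoring term from the $\|\ox_k\|^2$ source, exploiting $\|\ox_k\|\le\|x^*\|$, the negative contributions $-a_{k-1}b_{k-1}\l_{k-1}c_{k-1}\|\ox_{k-1}\|^2$ and the slack $c-s\a>0$ in \eqref{ordn}, so that the divergent weighted sum of $\|x_k-\ox_k\|^2$ is dominated by a strictly smaller divergent sum; this is where the full constellation $\d>p-q-1$ and $0<\l<1$ is spent.

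Having established $\liminf\|x_k-x^*\|=0$, I would read off the dichotomy from compactness. By Theorem \ref{strconvergencerates} the sequence $(x_k)$ is bounded and $f(x_k)\to\min_\cH f$, so weak lower semicontinuity of $f$ forces every weak sequential cluster point $\tilde x$ of $(x_k)$ into $\argmin f$, whence $\|\tilde x\|\ge\|x^*\|$. Suppose $x_k$ lies in the interior of $B(0,\|x^*\|)$ for $k$ large, i.e. $\|x_k\|\le\|x^*\|$. Then weak lower semicontinuity of the norm gives $\|\tilde x\|\le\liminf_j\|x_{k_j}\|\le\|x^*\|$, hence $\|\tilde x\|=\|x^*\|$ and, by uniqueness of the minimal-norm minimizer, $\tilde x=x^*$; thus $x_k\rightharpoonup x^*$. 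Finally the identity $\|x_k-x^*\|^2=(\|x_k\|^2-\|x^*\|^2)+2\<x^*,x^*-x_k\>\le 2\<x^*,x^*-x_k\>\to0$ yields strong convergence $x_k\to x^*$.

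The complement case, where $\|x_k\|\ge\|x^*\|$ for $k$ large, is the remaining difficulty: the norm constraint now points the wrong way, and forcing all weak cluster points to equal $x^*$ requires the additional bound $\limsup_k\|x_k\|\le\|x^*\|$. I would try to obtain it by showing that, outside the ball, the Tikhonov term $-c_kx_k$ makes $\|x_k-x^*\|$ eventually quasi-decreasing, using the discrete velocity estimate $\|x_k-x_{k-1}\|=\mathcal{O}(k^{-p/2})\to0$ from Theorem \ref{strconvergencerates} to rule out oscillations across the sphere $\|x\|=\|x^*\|$; then $\lim_k\|x_k-x^*\|$ would exist and, coinciding with the already established liminf, equal $0$. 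Once $\limsup_k\|x_k\|\le\|x^*\|$ is secured, the argument closes exactly as in the interior case.
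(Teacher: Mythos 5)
Your interior-of-the-ball argument is correct and is exactly the paper's Case II (weak cluster points lie in $\argmin f$, weak lower semicontinuity of the norm forces them to equal $x^*$, and weak convergence plus $\|x_k\|\le\|x^*\|$ upgrades to strong convergence). The other two components, however, are left genuinely open. For the unconditional claim $\liminf_{k\to+\infty}\|x_k-x^*\|=0$, your contradiction argument stalls precisely where you say it does: with $r=\frac{q+1}{2}$ the restoring coefficient $n_k$ and the source $a_kb_kc_k\|\ox_k\|^2$ are both of order $k^{q-p}$ with comparable constants, so summing \eqref{forEkstrdecayf} only yields $\e^2\lesssim\|x^*\|^2$, and the ``decoupling via monotonicity'' that you invoke to rescue this is never carried out. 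The paper does not prove the $\liminf$ statement by any energy contradiction. It runs a trichotomy: if $(x_k)$ is neither eventually inside nor eventually outside $B(0,\|x^*\|)$, one extracts a subsequence $(x_{l_n})$ with $\|x_{l_n}\|<\|x^*\|$ and applies the interior-case (Opial-type) argument to that subsequence alone, obtaining $x_{l_n}\to x^*$ strongly and hence $\liminf_k\|x_k-x^*\|=0$. So the general statement is a corollary of the case you already handled, applied along a subsequence; no new estimate is needed.

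The complement case is the real content of the theorem and your proposal for it (quasi-decrease of $\|x_k-x^*\|$ outside the ball, ruling out oscillations via $\|x_k-x_{k-1}\|\to0$) is only a sketch that would be hard to close, since the Tikhonov force pulls towards $\ox_k$, not towards $x^*$, and its size $c_k\to0$. The paper instead stays inside the Lyapunov framework: it adds $-\s_k\|x^*\|^2+\s_{k-1}\|x^*\|^2$ to both sides of \eqref{forEkstrdecay111}; since $\|x_{k-1}\|\ge\|x^*\|\ge\|\ox_{k-1}\|$, the modified energy $E_k-\s_{k-1}\|x^*\|^2$ remains nonnegative and the term $t_k\bigl(\|x_{k-1}\|^2-\|x^*\|^2\bigr)$ can be dropped. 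The leftover source is computed in \eqref{forstrconv} to be $c^2k^{2r-2p}+\mathcal{O}(k^{2r-2-p})$, hence summable once $2r<2p-1$, while $S_k$ from \eqref{Sk} is summable for $2r<q+1$. Choosing $r$ with $\max(p-\d,1)<2r<\min(q+1,2p-1)$ (this is where the hypotheses $p>1$ and $p-q-1<\d\le 0$ are actually spent) gives a uniform bound $\mu_n\bigl(f_n(x_n)-f_n(\ox_n)\bigr)\le L$, and the strong convexity inequality \eqref{fontos3} converts it into $\|x_n-\ox_n\|^2\le \frac{L}{\l}\,n^{p}(n-1)^{-2r-\d}\to0$, whence $x_n\to x^*$ because $\ox_n\to x^*$. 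In short: the key mechanism is the combination of the shifted energy with the quantitative strong convexity of $f_n$, not a restoring-force/oscillation argument, and neither of the two missing steps in your proposal is a routine completion.
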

\begin{proof}
We will use the notations and the energy functional $E_k$ used in the proof of Theorem \ref{strconvergencerates}.

{\bf Case  I.} Assume that $\|x_k\|\ge \|x^*\|$ for all $k\ge K_2$, where $K_2\ge K_1$ and $K_1$ was defined in the proof of Theorem \ref{strconvergencerates}.
Let us ad $-\s_k\|x^*\|^2+\s_{k-1}\|x^*\|^2$ to the both side of  \eqref{forEkstrdecay111}. Note that $E_k-\s_{k-1}\|x^*\|^2\ge0$ for all $k>K_2.$ Further, since $\|\ox_k\|\le \|x^*\|$, we get that $\|x_k\|^2-\|\ox_k\|^2\ge 0$ for all $k\ge K_2.$
Then we obtain for all $k>K_2$ that
\begin{align}\label{strconv}
&(E_{k+1}-\s_k\|x^*\|^2)-(E_k-\s_{k-1}\|x^*\|^2) +\xi_k\|u_k\|^2+m_k(f_{k-1}(x_{k-1})-f_{k-1}(\ox_{k-1}))\\
\nonumber& +n_k\|x_{k-1}-\ox_{k-1}\|^2+\eta_k\|x_k-x_{k-1}\|^2+t_k\|x_{k-1}\|^2\\
\nonumber&\le\left(b_{k-1}^2\l_{k-1}(c_{k-1}-c_k)+a_kb_kc_k\right)\|\ox_{k}\|^2-a_{k-1}b_{k-1}\l_{k-1}c_{k-1}\|\ox_{k-1}\|^2+(-\s_k+\s_{k-1})\|x^*\|^2+S_k.
\end{align}
The right hand side of \eqref{strconv} can be written as
\begin{align*}
&\left(b_{k-1}^2\l_{k-1}(c_{k-1}-c_k)+a_kb_kc_k\right)\|\ox_{k}\|^2-a_{k-1}b_{k-1}\l_{k-1}c_{k-1}\|\ox_{k-1}\|^2+(-\s_k+\s_{k-1})\|x^*\|^2+S_k\\
&=\left(\left(b_{k-1}^2\l_{k-1}(c_{k-1}-c_k)+a_kb_kc_k\right)\|\ox_{k}\|^2-\left(b_{k-2}^2\l_{k-2}(c_{k-2}-c_{k-1})+a_{k-1}b_{k-1}c_{k-1}\right)\right)\|\ox_{k-1}\|^2\\
&+\left(b_{k-2}^2\l_{k-2}(c_{k-2}-c_{k-1})+(1-\l_{k-1})a_{k-1}b_{k-1}c_{k-1}\right)\|\ox_{k-1}\|^2+(-\s_k+\s_{k-1})\|x^*\|^2+S_k,
\end{align*}
hence \eqref{strconv} becomes
\begin{align}\label{strconv1}
&(E_{k+1}-\s_k\|x^*\|^2)-(E_k-\s_{k-1}\|x^*\|^2) +\xi_k\|u_k\|^2+m_k(f_{k-1}(x_{k-1})-f_{k-1}(\ox_{k-1}))\\
\nonumber& +n_k\|x_{k-1}-\ox_{k-1}\|^2+\eta_k\|x_k-x_{k-1}\|^2+t_k(\|x_{k-1}\|^2-\|x^*\|^2)\\
\nonumber&\le \left(\left(b_{k-1}^2\l_{k-1}(c_{k-1}-c_k)+a_kb_kc_k\right)\|\ox_{k}\|^2-\left(b_{k-2}^2\l_{k-2}(c_{k-2}-c_{k-1})+a_{k-1}b_{k-1}c_{k-1}\right)\right)\|\ox_{k-1}\|^2\\
\nonumber &+\left(b_{k-2}^2\l_{k-2}(c_{k-2}-c_{k-1})+(1-\l_{k-1})a_{k-1}b_{k-1}c_{k-1}-\s_k+\s_{k-1}-t_k\right)\|x^*\|^2+S_k.
\end{align}

Now, according to \eqref{ordt}, \eqref{ordsigma} and the form of $a_k,\,b_k,\, c_k$ and $\l_k$ we deduce that there exists $K_3>K_2$ such that
\begin{align}\label{forstrconv}
& b_{k-2}^2\l_{k-2}(c_{k-2}-c_{k-1})+(1-\l_{k-1})a_{k-1}b_{k-1}c_{k-1}-\s_k+\s_{k-1}-t_k\\
\nonumber&=\l c(k-2)^{2r+\d}((k-2)^{-p}-(k-1)^{-p})+(1-\l(k-1)^\d)ac(k-1)^{2r-1-p}\\
\nonumber& -ck^{-p}(k^{2r}-\l(k-1)^{2r+\d})-ack^{2r-1-p} +\a ck^{2r-q-p}+c^2k^{2r-2p}\\
\nonumber&+c(k-1)^{-p}((k-1)^{2r}-\l(k-2)^{2r+\d})+ac(k-1)^{2r-1-p} -\a c(k-1)^{2r-q-p}-c^2(k-1)^{2r-2p}\\
\nonumber&-(ac(k-1)^{2r-1-p}-a\l c(k-1)^{2r-1+\d-p})-c((k-1)^{2r-p}-k^{2r-p})\\
\nonumber&-\l c(k-1)^{-p}((k-1)^{2r+\d}-(k-2)^{2r+\d})-\a c(k^{2r-q-p}-(k-1)^{2r-q-p})+c^2(k-1)^{2r-2p}\\
\nonumber&=\l c((k-2)^{2r+\d-p}-(k-1)^{2r+\d-p}+k^{-p}(k-1)^{2r+\d}-(k-1)^{-p}(k-2)^{2r+\d})\\
\nonumber&+ac((k-1)^{2r-1-p}-k^{2r-1-p}) +c^2k^{2r-2p}= c^2k^{2r-2p}+\mathcal{O}(k^{2r-2-p})<C k^{2r-2p}\mbox{ for some }C>0.
\end{align}
Hence, $\sum_{k=K_3}^{+\infty}\left(b_{k-2}^2\l_{k-2}(c_{k-2}-c_{k-1})+(1-\l_{k-1})a_{k-1}b_{k-1}c_{k-1}-\s_k+\s_{k-1}-t_k\right)\|x^*\|^2<+\infty,$
provided $2r-2p<-1.$
So in what follows we assume that $\max(p-\d,1)<2r<\min(q+1,2p-1).$ Then, by summing \eqref{strconv1} by $k=K_3$ to $k=n>K_3$ we obtain that there exists $L>0$ such that
$$\mu_n(f_n(x_n)-f_n(\ox_n))\le L,\mbox{ for all }n> K_3.$$
Now, by \eqref{fontos3} we get
$$\|x_n-\ox_n\|^2<L\frac{2n^p}{c\mu_n}=\frac{L}{\l}\frac{n^p}{(n-1)^{2r+\d}} \mbox{ for all }n> K_3.$$
Consequently, $\|x_n-\ox_n\|\to 0\mbox{ as }n\to+\infty$ which combined with the fact that $\ox_n \to x^*\mbox{ as }n\to+\infty$ lead to
$$\|x_n-x^*\|\to 0\mbox{ as }n\to+\infty.$$

{\bf Case II.}

Assume that there exists $k_0\in \N$ such that $\|x_n\|<\|x^*\|$ for all $n\ge k_0.$

 Now, we take $\bar{x} \in \mathcal{H}$ a weak sequential cluster point of  $(x_n),$ which exists since   $(x_n)$ is bounded. This means that there exists a sequence $\left(k_{n}\right)_{n \in \mathbb{N}} \subseteq[k_0,+\infty)\cap \N$ such that $k_{n} \to +\infty$ and $x_{k_{n}}$ converges weakly to $\bar{x}$ as $n \to +\infty$. According to Theorem \ref{strconvergencerates} and the fact that $f$ is lower semicontinuous one has
$$
f(\bar{x}) \le\liminf_{n \rightarrow+\infty} f\left(x_{k_{n}}\right) = \lim_{n \rightarrow+\infty} f\left(x_{k_{n}}\right)=\min_{\cH}f \, ,$$ hence $\bar{x} \in \operatorname{argmin} f.$
Now, since the norm is weakly lower semicontinuous one has that
$$
\begin{array}{c}
\|\bar{x}\| \leq \liminf _{n \rightarrow+\infty}\left\|x_{k_{n}}\right\| \leq\left\|x^\ast \right\|
\end{array}
$$
which, from the definition of $x^\ast$, implies that $\bar{x}=x^{*}.$ This shows that $(x_n)$ converges weakly to $x^\ast$. So
$$
\left\|x^\ast \right\| \leq \liminf _{n \rightarrow+\infty}\|x_n\| \leq \limsup _{n \rightarrow+\infty}\|x_n\| \leq\left\|x^\ast \right\|,$$
hence we have
$$ \lim _{n \rightarrow+\infty}\|x_n\|=\left\|x^\ast \right\|.$$
From  the previous relation and the fact that $x_n\rightharpoonup x^\ast$ as $n \to +\infty,$ we obtain the strong convergence, that is
$$
\lim _{n \rightarrow+\infty} x_n=x^\ast.$$

{\bf Case  III.} We suppose that there exists $k_0\in\N$ such that for every $n \geq k_0$ there exists $l \geq n$ such that $\left\|x^\ast \right\|>\|x_l\|$ and also there exists $m \geq n$ such that $\left\|x^{*}\right\| \leq\|x_m\|$.

So let $k_1\ge k_0$ and $l_1\ge k_1$ such that $\left\|x^\ast \right\|>\|x_{l_1}\|.$
Let $k_2>l_1$ and $l_2\ge k_2$ such that $\left\|x^\ast \right\|>\|x_{l_2}\|.$ Continuing the procedure we obtain $(x_{l_n})$, a subsequence of $(x_n)$ with the property that $\|x_{l_n}\|<\|x^*\|$ for all $n\in\N.$ Now reasoning as in {\bf Case II} we obtain that
$\lim _{n \rightarrow+\infty} x_{l_n}=x^\ast.$
Consequently,
$$\liminf_{k \rightarrow+\infty} \|x_n-x^\ast\|=0.$$
\end{proof}

\subsection{Full strong convergence for the case $\d=0,\,\l=1$}

Now we are able  to show that in case $\l=1$ the sequences generated by Algorithm \ref{algo} converges strongly to the minimum norm minimizer of the objective function $f.$ The following result is our main result of the present section.

\begin{Theorem}\label{strconvergencelambda1}
Assume that $0<q< 1$, $1<p< q+1$, $\l_k\equiv 1$. Let $(x_k)$ be a sequence generated by \eqref{algo}. For every $k\ge 2$ let us denote by $u_k$ the element  from $\p f(x_k)$ that satisfies \eqref{formx} with equality, i.e.,
$$x_{k}=\a_{k-1}(x_{k-1}-x_{k-2})- u_k +\left(1- c_{k-1}\right) x_{k-1}.$$  Then   the following  results are valid.
\begin{enumerate}
\item[(i)]  If $p\le 2q$ then $\|x_n-\ox_n\|=\mathcal{O}(n^\frac{q-1}{2})$ as $n\to+\infty,$ hence $\lim_{n\to+\infty}x_n=x^*.$
Further,
$\|x_n-x_{n-1}\|^2,\,\|u_n\|^2\in \mathcal{O}(n^{q-p-1})\mbox{ as }n\to+\infty$ and $f(x_{n})-\min_{\cH}f= \mathcal{O}(n^{-p})\mbox{ as }n\to+\infty.$

\item[(ii)] If $2q< p\le\frac{3q+1}{2}$ then  $\|x_n-\ox_n\|=\mathcal{O}(n^\frac{q-1}{2})$ as $n\to+\infty$ and $\lim_{n\to+\infty}x_n=x^*.$ Further,
$f_{n}(x_{n})-f_{n}(\ox_{n}),\,\|x_n-x_{n-1}\|^2,\,\|u_n\|^2\in \mathcal{O}(n^{-q-1})\mbox{ as }n\to+\infty$
and $f(x_{n})-\min_{\cH}f= \mathcal{O}(n^{-p})\mbox{ as }n\to+\infty.$ The following sum estimates also hold.
$\sum_{k=1}^{+\infty} k^q (f_{k}(x_{k})-f_{k}(\ox_{k}))<+\infty,$ $\sum_{k=1}^{+\infty} k^{2q}\|u_k\|^2<+\infty$ and
$\sum_{k=1}^{+\infty} k^q\|x_{k+1}-x_{k}\|^2<+\infty.$
\item[(iii)] If $\frac{3q+1}{2}<p<q+1$, then
 $\|x_n-\ox_n\|=\mathcal{O}(n^{p-q-1})$ as $n\to+\infty$, hence
$\lim_{n\to+\infty}x_n=x^*.$ Further,
$f_{n}(x_{n})-f_{n}(\ox_{n}),\,\|x_n-x_{n-1}\|^2,\,\|u_n\|^2\in \mathcal{O}(n^{2p-4q-2})\mbox{ as }n\to+\infty.$
Additionally, if  $\frac{3q+1}{2}<p<\frac{4q+2}{3}$, then $f(x_{n})-\min_{\cH}f= \mathcal{O}(n^{-p})\mbox{ as }n\to+\infty$
and if $\frac{4q+2}{3}\le p<q+1$, then $f(x_{n})-\min_{\cH}f= \mathcal{O}(n^{2p-4q-2})\mbox{ as }n\to+\infty.$
Moreover,
$\sum_{k=1}^{+\infty} k^q (f_{k}(x_{k})-f_{k}(\ox_{k}))<+\infty,$ $\sum_{k=1}^{+\infty} k^{2q}\|u_k\|^2<+\infty$ and
$\sum_{k=1}^{+\infty} k^q\|x_{k+1}-x_{k}\|^2<+\infty.$
\end{enumerate}
\end{Theorem}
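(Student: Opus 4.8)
The plan is to recycle the Lyapunov machinery built in the proof of Theorem \ref{strconvergencerates}, now specialized to the critical endpoint $\l_k\equiv 1$ (that is $\d=0,\,\l=1$), which the hypotheses of Theorem \ref{strconvergencealgorithm} deliberately excluded. I would keep the energy functional $E_k$ from \eqref{Lyapunovstr} together with the auxiliary sequences $\mu_k,\nu_k,\s_k,\xi_k,m_k,n_k,\eta_k,t_k$ and their orders \eqref{ordmu}--\eqref{ordt}, all of which remain valid at $\l=1$. The decisive feature of $\l=1$ is that the obstructive summand $(1-\l_{k-1})a_{k-1}b_{k-1}c_{k-1}$ that appears in Case I of the proof of Theorem \ref{strconvergencealgorithm} now \emph{vanishes}. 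Consequently the Tikhonov error carried by $\|\ox_k\|^2$ and $\|\ox_{k-1}\|^2$ telescopes cleanly: adding $-\s_k\|x^*\|^2+\s_{k-1}\|x^*\|^2$ to \eqref{forEkstrdecay111} and invoking $\|\ox_k\|\le\|x^*\|$, the whole right-hand coefficient collapses, exactly as in \eqref{forstrconv}, to $c^2k^{2r-2p}+\mathcal{O}(k^{2r-2-p})$. This replaces the crude bound $\mathcal{O}(k^{2r-1-p})$ of \eqref{forEkstrdecayf} by the much smaller $\mathcal{O}(k^{2r-2p})$, and this gain (available only because $p>1$) is what upgrades $\liminf_{k}\|x_k-x^*\|=0$ to a genuine limit.

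First I would fix the free exponent $r\in\left(\frac12,\frac{q+1}{2}\right]$ case by case and sum the resulting inequality for the shifted, nonnegative energy $E_k-\s_{k-1}\|x^*\|^2\ge0$. Since the residual is of order $k^{2r-2p}$, its partial sums stay bounded when $2r-2p<-1$ and grow like $n^{2r-2p+1}$ otherwise; the threshold $2r=2p-1$ together with the ceiling $2r\le q+1$ is precisely what separates the regimes $p\le 2q$, $2q<p\le\frac{3q+1}{2}$ and $\frac{3q+1}{2}<p<q+1$, while the finer split at $p=\frac{4q+2}{3}$ records whether the velocity/Tikhonov balance or the residual dominates the function-value estimate. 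In each case the summation yields a bound on $E_{n+1}-\s_n\|x^*\|^2$, hence on $\mu_n\big(f_n(x_n)-f_n(\ox_n)\big)$ and on $\nu_n\|x_n-\ox_n\|^2$. Feeding the former into the strong-convexity inequality \eqref{fontos3}, namely $f_n(x_n)-f_n(\ox_n)\ge\frac{c}{2n^p}\|x_n-\ox_n\|^2$, and using $\mu_n=\mathcal{O}(n^{q+1})$, $\nu_n=\mathcal{O}(n^{2r-1-q})$ from \eqref{ordmu}, \eqref{ordnu}, I obtain $\|x_n-\ox_n\|=\mathcal{O}(n^{(q-1)/2})$ for $p\le\frac{3q+1}{2}$ and $\|x_n-\ox_n\|=\mathcal{O}(n^{p-q-1})$ for $\frac{3q+1}{2}<p<q+1$. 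Combined with $\ox_n\to x^*$ (see \cite{att-com1996} and \eqref{lfos}) this gives the full strong convergence $\lim_{n\to+\infty}x_n=x^*$ in all three cases.

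The remaining rates and sum estimates I would read off from the same summed inequality, as in Part II of the proof of Theorem \ref{strconvergencerates}. The function-value rate $f(x_n)-\min_{\cH}f=\mathcal{O}(n^{-p})$ (respectively $\mathcal{O}(n^{2p-4q-2})$ once $p\ge\frac{4q+2}{3}$) follows from the $\mu_n$-estimate together with \eqref{fontos5}; the velocity bound comes from $v_{n+1}\le E_{n+1}$ after subtracting the now-controlled contributions $a_n(x_n-\ox_n)$ and $c_n x_n$ and using boundedness of $(x_n)$ from Theorem \ref{strconvergencerates}(i); and $\|u_n\|$ is estimated directly from $\l_{n-1}u_n=(x_n-x_{n-1})-\a_{n-1}(x_{n-1}-x_{n-2})+c_{n-1}x_{n-1}$. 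For the regime $2q<p$ the sum estimates $\sum_k k^{q}\big(f_k(x_k)-f_k(\ox_k)\big)<+\infty$, $\sum_k k^{2q}\|u_k\|^2<+\infty$ and $\sum_k k^{q}\|x_{k+1}-x_k\|^2<+\infty$ drop out of the nonnegative left-hand terms $m_k(f_{k-1}(x_{k-1})-f_{k-1}(\ox_{k-1}))$, $\xi_k\|u_k\|^2$ and $\eta_k\|x_k-x_{k-1}\|^2$ upon taking $r=\frac{q+1}{2}$ and recalling $m_k=\mathcal{O}(k^{q})$, $\xi_k=\mathcal{O}(k^{q+1})$, $\eta_k=\mathcal{O}(k)$.

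The main obstacle, to my mind, is not any single inequality but the case-dependent bookkeeping in the middle step: one must choose $r$ optimally against the competing demands that $\mu_n,\nu_n$ be as large as possible (for fast decay of $\|x_n-\ox_n\|$) while $2r-2p$ stays below the summability threshold $-1$, and then track which of the three residual contributions---$k^{2r-2p}$, the velocity coefficient $\eta_k$, or the Tikhonov drift $\|\ox_k-\ox_{k-1}\|$ controlled through \eqref{lfos}---governs each rate. It is exactly these crossovers that manufacture the thresholds $2q$, $\frac{3q+1}{2}$ and $\frac{4q+2}{3}$, and checking that the chosen $r$ stays in $\left(\frac12,\frac{q+1}{2}\right]$ throughout each subregime is where the delicate part of the argument sits.
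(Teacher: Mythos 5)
There is a genuine gap: direct summation of the energy inequality, which is the engine of your proposal, can only yield \emph{boundedness} of the (shifted) energy, not its \emph{decay}, and the rates claimed in the theorem require the latter. Concretely, with $a_k=ak^{r-1}$, $b_k=k^r$ and the shifted energy $E_k-\s_{k-1}\|x^*\|^2$, summing gives at best $\mu_n\big(f_n(x_n)-f_n(\ox_n)\big)\le L$, hence $f_n(x_n)-f_n(\ox_n)=\mathcal{O}(n^{-2r})$ and, via \eqref{fontos3}, $\|x_n-\ox_n\|^2=\mathcal{O}(n^{p-2r})$ with the ceiling $2r<\min(q+1,2p-1)$ forced by the summability threshold $2r-2p<-1$ and by $\nu_k\ge 0$. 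When $1<p<2-q$ this ceiling is $2p-1$, so you obtain only $\|x_n-\ox_n\|^2=\mathcal{O}(n^{1-p+\e})$, which is strictly weaker than the claimed $\mathcal{O}(n^{q-1})$ (take $q=0.9$, $p=1.05$). Moreover, the nonnegativity $E_k-\s_{k-1}\|x^*\|^2\ge0$ on which you rely holds only when $\|x_{k-1}\|\ge\|x^*\|$; this is exactly the Case~I configuration of Theorem \ref{strconvergencealgorithm}, and without a way to treat the other configurations you fall back to the $\liminf$-type conclusion of that theorem. Note also that the vanishing of $(1-\l_{k-1})a_{k-1}b_{k-1}c_{k-1}$ is not the decisive gain you take it to be: the bound $c^2k^{2r-2p}+\mathcal{O}(k^{2r-2-p})$ in \eqref{forstrconv} is already available for $\l\in\,]0,1[$, yet Theorem \ref{strconvergencealgorithm} still only delivers $\liminf_{k\to+\infty}\|x_k-x^*\|=0$.

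The paper's proof proceeds by a different mechanism that your proposal does not contain. It switches to the weights $a_k\equiv a$ with $\a>a>0$ and $b_k=k^q$ (so $\nu_k=\mathcal{O}(1)$ with positive limit $a(\a-a)$), drops the $\s$-term from the energy to form $e_k=\mu_{k-1}(f_{k-1}(x_{k-1})-f_{k-1}(\ox_{k-1}))+v_k+\nu_{k-1}\|x_{k-1}-\ox_{k-1}\|^2$ (thereby avoiding any sign condition on $\|x_k\|^2-\|x^*\|^2$; the $\s$- and $t$-contributions are moved to the right-hand side and controlled by the boundedness of $(x_n)$ from Theorem \ref{boundedness}), and then establishes the strict dissipation inequality $e_{k+1}-e_k+\frac{H}{k^{\b}}e_k\le(\mbox{telescoping})+C_1k^{2q-p-1}+C_2k^{\max(p-q-2,q-2)}$ with $\b=\max(q,p-q)<1$, by checking that the nonnegative left-hand terms $m_k(\cdot)+\xi_k\|u_k\|^2+n_k\|\cdot\|^2+\eta_k\|\cdot\|^2$ dominate $\frac{H}{k^{\b}}e_k$. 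Multiplying by the integrating factor $\pi_k=\big(\prod_{i=K_0}^k(1-\tfrac{H}{i^{\b}})\big)^{-1}$, which by Lemma \ref{pkstuff} grows like $e^{Cn^{1-\b}}$ and hence faster than any polynomial, converts the summation into a genuine decay estimate $e_{n+1}\le C\big(n^{q-p}+n^{2q-p-1+\b}+n^{\max(p-q-2+\b,q-2+\b)}\big)$; it is this decay, read through $\mu_n$, $\nu_n$ and \eqref{fontos3}, that produces the thresholds $2q$, $\tfrac{3q+1}{2}$, $\tfrac{4q+2}{3}$ and the rates of the theorem. This Gronwall-type step is the missing idea in your proposal.
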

\begin{proof} We use the notations from the proof of Theorem \ref{strconvergencerates}. Then, for $\l=1,\,\d=0$ \eqref{forEkstrdecay11} becomes
\begin{align}\label{fulstr1}
&v_{k+1}-v_k   +2b_{k-1}^2(f_k(x_k)-f_k(\ox_k))-2b_{k-2}^2(f_{k-1}(x_{k-1})-f_{k-1}(\ox_{k-1}))\\
\nonumber&-(2b_{k-1}^2-2a_{k-1}b_{k-1}-2b_{k-2}^2)(f_{k-1}(x_{k-1})-f_{k-1}(\ox_{k-1}))+b_{k-1}^2\|u_k\|^2\\
\nonumber& +(-a_k^2-\a_k a_kb_k+a_kb_kc_k+a_{k-1}b_{k-1})\|x_k-\ox_{k}\|^2\\
\nonumber& -(-a_{k-1}^2-\a_{k-1} a_{k-1}b_{k-1}+a_{k-1}b_{k-1}c_{k-1}+a_{k-2}b_{k-2})\|x_{k-1}-\ox_{k-1}\|^2\\
\nonumber&+(-\a_{k-1} a_{k-1}b_{k-1}+a_{k-1}b_{k-1}c_{k-1}+a_{k-2}b_{k-2}-(1+s_{k-1})(a_{k-1}b_{k-1}-\a_k a_kb_k))\|x_{k-1}-\ox_{k-1}\|^2\\
\nonumber& -(\a_k^2b_k^2+\a_k a_kb_k-\a_kb_k^2c_k-b_{k-1}^2+3a_{k-1}b_{k-1})\|x_k-x_{k-1}\|^2\\
\nonumber&\le (b_k^2c_k^2-\a_kb_k^2c_k -a_kb_kc_k+ b_{k-1}^2 c_k)\|x_k\|^2\\
\nonumber& -(b_{k-1}^2c_{k-1}^2-\a_{k-1}b_{k-1}^2c_{k-1} -a_{k-1}b_{k-1}c_{k-1}+ b_{k-2}^2 c_{k-1})\|x_{k-1}\|^2\\
\nonumber&+(b_{k-1}^2c_{k-1}^2+\a_kb_k^2c_k-\a_{k-1}b_{k-1}^2c_{k-1} + b_{k-2}^2 c_{k-1}-b_{k-1}^2c_{k-1})\|x_{k-1}\|^2\\
\nonumber&+\left(b_{k-1}^2(c_{k-1}-c_k)+a_kb_kc_k\right)\|\ox_{k}\|^2- a_{k-1}b_{k-1}c_{k-1}\|\ox_{k-1}\|^2\\
\nonumber&+\left(\left(1+\frac{1}{s_{k-1}}\right)(a_{k-1}b_{k-1}-\a_k a_kb_k)+\frac{a_{k-1}b_{k-1}}{2}- b_{k-1}^2c_{k-1}\right)\|\ox_{k}-\ox_{k-1}\|^2.
\end{align}

We will assume from now on that $a_k\equiv a,\,\a>a>0$ and $b_k=k^q.$ Then, concerning the right hand side of \eqref{fulstr1} we conclude the following.
$$-\sigma_k=b_k^2c_k^2-\a_kb_k^2c_k -a_kb_kc_k+ b_{k-1}^2 c_k\ge 0\mbox{ after an index }k\mbox{ big enough.}$$
Further, $-\sigma_k=\mathcal{O}(k^{q-p}).$
Note that for $k$ big enough one has
$$-t_k=b_{k-1}^2c_{k-1}^2+\a_kb_k^2c_k-\a_{k-1}b_{k-1}^2c_{k-1} + b_{k-2}^2 c_{k-1}-b_{k-1}^2c_{k-1}\le 0.$$

Now, since $\|\ox_k\|\le\|x^*\|$ we conclude that there exists $C_1>0$ such that
$$b_{k-1}^2(c_{k-1}-c_k)\|\ox_{k}\|^2\le C_1 k^{2q-p-1}\mbox{ for }k\mbox{ big enough.}$$
We recall that $S_k=\left(\left(1+\frac{(k-1)^{p-q}}{s}\right)(a_{k-1}b_{k-1}-\a_k a_kb_k)+\frac{a_{k-1}b_{k-1}}{2}\right)\|\ox_{k}-\ox_{k-1}\|^2$ and by using \eqref{lfos} we conclude that there exists $C_2>0$ such that for $k$ big enough one has
$$S_k\le C_2 k^{\max(p-q-2,q-2)}.$$

Consider now the energy functional $e_k=\mu_{k-1}(f_{k-1}(x_{k-1})-f_{k-1}(\ox_{k-1}))+ v_k+\nu_{k-1}\|x_{k-1}-\ox_{k-1}\|^2.$ Obviously for our setting, one has $\mu_k=\mathcal{O}(k^{2q})$ and $\nu_k=\mathcal{O}(1).$
Then, \eqref{fulstr1} yields
\begin{align}\label{fulstr2}
&e_{k+1}-e_k +m_k (f_{k-1}(x_{k-1})-f_{k-1}(\ox_{k-1}))+\xi_k\|u_k\|^2
+n_k\|x_{k-1}-\ox_{k-1}\|^2 +\eta_k\|x_k-x_{k-1}\|^2\\
\nonumber&\le -\sigma_k\|x_k\|^2
+\sigma_{k-1}\|x_{k-1}\|^2+a_kb_kc_k\|\ox_{k}\|^2- a_{k-1}b_{k-1}c_{k-1}\|\ox_{k-1}\|^2\\
\nonumber&\,\,+C_1 k^{2q-p-1}+C_2 k^{\max(p-q-2,q-2)}.
\end{align}

Note that for $k$ big enough one has
$m_k= -(2b_{k-1}^2-2a_{k-1}b_{k-1}-2b_{k-2}^2)\ge 0$ and $m_k=\mathcal{O}(k^q)$ as $k\to+\infty,$
$n_k=-\a_{k-1} a_{k-1}b_{k-1}+a_{k-1}b_{k-1}c_{k-1}+a_{k-2}b_{k-2}-(1+s_{k-1})(a_{k-1}b_{k-1}-\a_k a_kb_k)\ge 0$ and $n_k=\mathcal{O}(k^{q-p})$ as $k\to+\infty,$
$\xi_k=b_{k-1}^2=(k-1)^{2q}\ge 0$ and $\xi_k=\mathcal{O}(k^{2q})$ as $k\to+\infty,$
further
$\eta_k=-\a_k^2b_k^2-\a_k a_kb_k+\a_kb_k^2c_k+b_{k-1}^2-3a_{k-1}b_{k-1}\ge 0$ and $\eta_k=\mathcal{O}(k^{q})$ as $k\to+\infty$.
By using the fact that
\begin{align*}
v_k&=\| a_{k-1}(x_{k-1}-\ox_{k-1}) +b_{k-1} (x_k-x_{k-1}+ u_k)\|^2\\
&\le2a^2\|x_{k-1}-\ox_{k-1}\|^2+4(k-1)^{2q}\|x_k-x_{k-1}\|^2+4(k-1)^{2q}\|u_k\|^2,
\end{align*}
we deduce that there exists $H>0$ such that
$$\frac{H}{k^{\b}}e_k\le m_k (f_{k-1}(x_{k-1})-f_{k-1}(\ox_{k-1}))+\xi_k\|u_k\|^2
+n_k\|x_{k-1}-\ox_{k-1}\|^2 +\eta_k\|x_k-x_{k-1}\|^2,$$
where $\b=\max(q,p-q)<1.$

Consequently, according to \eqref{fulstr2} there exists an index $K_0\in\N$ such that for all $k> K_0$ it holds
\begin{align}\label{fulstr3}
&e_{k+1}-e_k +\frac{H}{k^{\b}}e_k\le -\sigma_k\|x_k\|^2
+\sigma_{k-1}\|x_{k-1}\|^2+a_kb_kc_k\|\ox_{k}\|^2- a_{k-1}b_{k-1}c_{k-1}\|\ox_{k-1}\|^2\\
\nonumber&\,\,+C_1 k^{2q-p-1}+C_2 k^{\max(p-q-2,q-2)}.
\end{align}
Now, by multiplying \eqref{fulstr3} with $\pi_k=\frac{1}{\prod_{i=K_0}^k\left(1-\frac{H}{i^{\b}}\right)}$ we obtain
\begin{align}\label{fulstr4}
\pi_ke_{k+1}-\pi_{k-1}e_k \le& \pi_k((-\sigma_k)\|x_k\|^2-(-\sigma_{k-1})\|x_{k-1}\|^2)\\
\nonumber&+\pi_k(a_kb_kc_k\|\ox_{k}\|^2- a_{k-1}b_{k-1}c_{k-1}\|\ox_{k-1}\|^2)\\
\nonumber&\,\,+C_1\pi_k k^{2q-p-1}+C_2 \pi_k k^{\max(p-q-2,q-2)}.
\end{align}
Now, by summing \eqref{fulstr4} from $k=K_0+1$ to $n>K_0+1$ big enough and using Lemma \ref{pkstuff} we obtain that there exist some positive constants still denoted  by $C_1,C_2,C_3$ such that
$$\pi_{n}e_{n+1}\le \pi_n(-\sigma_n)\|x_n\|^2+\pi_n a_nb_nc_n\|\ox_{n}\|^2+C_1\pi_n n^{2q-p-1+\b}+C_2 \pi_n n^{\max(p-q-2+\b,q-2+\b)}+C_3.$$
Now, taking into account that $(-\sigma_n),\,(a_nb_nc_n)=\mathcal{O}(n^{q-p})$ as $n\to+\infty$ and according to Theorem \ref{boundedness} $(x_n)$ is bounded and $\|\ox_n\|\le \|x^*\|$, the above relation leads to
\begin{equation}\label{foren}
e_{n+1}\le C_0 n^{q-p}+C_1 n^{2q-p-1+\b}+C_2 n^{\max(p-q-2+\b,q-2+\b)}+\frac{C_3}{\pi_n}<C(n^{q-p}+n^{2q-p-1+\b}+ n^{\max(p-q-2+\b,q-2+\b)}),
\end{equation}
for some constant $C>0.$

Let us discuss the order of the right hand side of \eqref{foren}.

If $\max(p-q-2+\b,q-2+\b)=q-2+\b$, that is, $p\le 2q$ then $\b=\max(p-q,q)=q$, hence $\max(p-q-2+\b,q-2+\b)=2q-2.$ Obviously by assumption $2q-p-1+\b=3q-p-1>2q-2$, further, since $q\ge \frac{p}{2}>\frac12$ one has $3q-p-1>q-p$ so the right hand side of \eqref{foren} is less than $Cn^{3q-p-1}$ for a constant $C>0$ appropriately chosen.

If $\max(p-q-2+\b,q-2+\b)=p-q-2+\b$, that is, $p\ge 2q$ then $\b=\max(p-q,q)=p-q$, hence $\max(p-q-2+\b,q-2+\b)=2p-2q-2.$
Obviously, the $2q-p-1+\b=q-1>q-p,$ hence the right hand side of \eqref{foren} is less than $Cn^{q-1}$  provided $2q\le p\le\frac{3q+1}{2}$ and the right hand side of \eqref{foren} is less than  $Cn^{2p-2q-2}$ provided $\frac{3q+1}{2}<p<q+1$ for a constant $C>0$ appropriately chosen.

So using \eqref{fontos3}, \eqref{foren} and the form of $e_{n+1}$ we conclude the following.

{\bf a.} If $p\le 2q$ then for some $C'>0$ it holds
$$\|x_n-\ox_n\|^2\le \frac{2n^p}{c}(f_{n}(x_{n})-f_{n}(\ox_{n}))\le \frac{2n^p}{c\mu_n}e_{n+1}\le C' n^{q-1}.$$
Consequently, $\|x_n-\ox_n\|=\mathcal{O}(n^\frac{q-1}{2})$ as $n\to+\infty.$ Since $\ox_n\to x^*$ as $n\to+\infty$, we obtain in particular that $\lim_{n\to+\infty}x_n=x^*.$

Further, $f_{n}(x_{n})-f_{n}(\ox_{n})\le \frac{1}{c\mu_n}e_{n+1}$ and $v_{n+1}\le e_{n+1}$, hence
$$f_{n}(x_{n})-f_{n}(\ox_{n}),\,\|x_n-x_{n-1}\|^2,\,\|u_n\|^2\in \mathcal{O}(n^{q-p-1})\mbox{ as }n\to+\infty.$$

According to \eqref{fontos5} we have $f(x_n)-\min_{\cH}f\le f_n(x_n)-f_n(\ox_n)+\frac{c}{2n^p}\|x^*\|^2$, and since $q-p-1<-p$, we obtain that
$f(x_{n})-\min_{\cH}f= \mathcal{O}(n^{-p})\mbox{ as }n\to+\infty.$

{\bf b.} If $2q< p\le\frac{3q+1}{2}$ then by using the fact that $\nu_n=\mathcal{O}(1)$ we obtain from \eqref{foren} that
$$\|x_n-\ox_n\|^2\le  \frac{1}{\nu_n}e_{n+1}\le C' n^{q-1}, \mbox{ for some }C'>0.$$
Consequently, $\|x_n-\ox_n\|=\mathcal{O}(n^\frac{q-1}{2})$ as $n\to+\infty$ and since $q<1$ we obtain in particular that $\lim_{n\to+\infty}x_n=x^*.$ Analogously to the previous case, one can deduce that
$$f_{n}(x_{n})-f_{n}(\ox_{n}),\,\|x_n-x_{n-1}\|^2,\,\|u_n\|^2\in \mathcal{O}(n^{-q-1})\mbox{ as }n\to+\infty$$
and $f(x_{n})-\min_{\cH}f= \mathcal{O}(n^{-p})\mbox{ as }n\to+\infty.$

{\bf c.} If $\frac{3q+1}{2}<p<q+1$, then by the same argument as in the previous case we deduce that
 $\|x_n-\ox_n\|=\mathcal{O}(n^{p-q-1})$ as $n\to+\infty$, hence
$\lim_{n\to+\infty}x_n=x^*.$ Further, one has
$$f_{n}(x_{n})-f_{n}(\ox_{n}),\,\|x_n-x_{n-1}\|^2,\,\|u_n\|^2\in \mathcal{O}(n^{2p-4q-2})\mbox{ as }n\to+\infty.$$
Here, by using \eqref{fontos5}, concerning the rate of the potential energy $f(x_{n})-\min_{\cH}f$ we conclude the following.

In one hand, if $-p>2p-4q-2$, that is $2q<p<\frac{4q+2}{3}$, then $f(x_{n})-\min_{\cH}f= \mathcal{O}(n^{-p})\mbox{ as }n\to+\infty.$

On the other hand, if $\frac{4q+2}{3}\le p<q+1$, then $f(x_{n})-\min_{\cH}f= \mathcal{O}(n^{2p-4q-2})\mbox{ as }n\to+\infty.$
\vskip0.3cm

In order to obtain sum estimates, let us return to \eqref{fulstr2} which holds from an index $K_0$ big enough. By summing \eqref{fulstr2} from $k=K_0$ to $k=n$ we obtain
\begin{align}\label{fulstr22}
&e_{n+1} +\sum_{k=K_0}^n m_k (f_{k-1}(x_{k-1})-f_{k-1}(\ox_{k-1}))+\sum_{k=K_0}^n \xi_k\|u_k\|^2
+\sum_{k=K_0}^n n_k\|x_{k-1}-\ox_{k-1}\|^2\\
\nonumber& +\sum_{k=K_0}^n \eta_k\|x_k-x_{k-1}\|^2\le -\sigma_n\|x_n\|^2+a_nb_nc_n\|\ox_{n}\|^2+C_1 \sum_{k=K_0}^n k^{2q-p-1}\\
\nonumber& +C_2\sum_{k=K_0}^n  k^{\max(p-q-2,q-2)}+C_3,\mbox{ for some }C_3>0.
\end{align}
Now, since $(x_n),\,(\ox_n)$ are bounded and $\sigma_n,\,a_nb_nc_n\in\mathcal{O}(n^{q-p})$ as $n\to+\infty,$ further $q<1<p<1+q$, we deduce that for $p>2q$ the right hand side of \eqref{fulstr22} is finite. So taking into account the form of $m_k,\eta_k$ and $\xi_k$ we obtain that
$\sum_{k=1}^{+\infty} k^q (f_{k}(x_{k})-f_{k}(\ox_{k}))<+\infty,$ $\sum_{k=1}^{+\infty} k^{2q}\|u_k\|^2<+\infty$ and
$\sum_{k=1}^{+\infty} k^q\|x_{k+1}-x_{k}\|^2<+\infty.$
\end{proof}

\section{Conclusions, perspectives}

In the present paper we showed that the constellation  $q=1,\,\l_k\equiv 1$ is not necessarily the best choice for Algorithm \eqref{algo} since in case $0<q<1$ the control on the stepsize parameter $\l_k$ allows us to obtain arbitrary  rate for the potential energy $f(x_k)-\min_{\cH}f$. Further, our analysis reveals that the inertial parameter $\a_k$, the stepsize $\l_k$ and the Tikhonov regularization parameter $c_k$ are strongly correlated: in case $q+1<p,\,\d\ge 0$ weak convergence of the generated sequences and fast convergence of  the function values can be obtained, meanwhile in case $p<q+1,\,\d\le 0$ strong convergence results for the generated sequences and fast convergence of  the function values can be provided.

Another important achievement of the present paper is that for the case $\l_k\equiv 1,\,p<q+1$ we succeeded to obtain "full" strong convergence of the generated sequences to the minimal norm solution $x^*$, that is $\lim_{k\to+\infty}\|x_k-x^*\|=0.$ For the same constellation of parameters, we also obtained fast convergence of  the function values and velocity and some  sum estimates. Due to our best knowledge this is the first result of this type in the literature concerning discrete dynamical systems, however in continuous case some similar results have already been obtained in the recent papers \cite{ABCR,L-jde,BCLstr}. Nevertheless, in order to obtain strong convergence we had to develop some original new techniques.

 In our context, one can observe that the case $p=q+1$ is  critical in the sense that separates the two cases: the case when we obtain fast convergence of the function values and weak convergence of the generated sequences to a minimizer and the case when the strong convergence of the generated sequences to a minimizer of minimum norm is assured. However, even in this case we can obtain  fast convergence of  the function values and velocity and also sum estimates, both for the case $\d\ge 0$ and $\d<0.$ These facts are in concordance with the results obtained for continuous dynamics in \cite{ACR}, \cite{BCL} and \cite{AL-siopt}.

  Some other subjects for future investigations are the gradient type algorithms obtained via explicit discretization from  \eqref{DynSys} and the dynamical systems studied in the papers mentioned above.

\appendix
\section{Auxiliary results}\label{app}

The following lemma summarizes several important results which are behind the Tikhonov regularization techniques and are used in our proofs.

\begin{lemma}\label{lfosapen} Let $f:\mathcal{H}\to\overline{\mathbb{R}}$ a proper, convex and lsc function and let $(\e_k)$ a positive non-increasing sequence that converges to $0.$
By  $\ox_{k}$ we denote the unique solution of the strongly convex minimization problem
\begin{align*}
 \min_{x \in \mathcal{H}} \left( f(x) + \frac{\e_k}{2} \| x \|^2 \right).
\end{align*}

Then, for all $k\ge 1$ one has
$$\frac{\e_k-\e_{k+1}}{\e_{k+1}}\<\ox_k,\ox_{k+1}-\ox_{k}\>\ge\|\ox_{k+1}-\ox_{k}\|^2$$
and
$$\frac{\e_k-\e_{k+1}}{\e_k}\<\ox_{k+1},\ox_{k+1}-\ox_{k}\>\ge\|\ox_{k+1}-\ox_{k}\|^2.$$
Consequently, the sequence $(\|\ox_k\|)_{k\ge 1}$ is non-decreasing and  one has $\<\ox_{k+1},\ox_k\>\ge 0$ for all $k\ge 1.$ Additionally, the following statements hold for all $k\ge 1$.
\begin{itemize}
\item[a)]  $\|\ox_{k+1}\|^2-\|\ox_k\|^2\ge \frac{\e_{k}+\e_{k+1}}{\e_{k}-\e_{k+1}}\|\ox_{k+1}-\ox_k\|^2.$
\item[b)]  $\|\ox_k\|^2+\frac{\e_{k+1}}{\e_k-\e_{k+1}}\|\ox_{k+1}-\ox_k\|^2\le\<\ox_{k+1},\ox_k\>\le \|\ox_{k+1}\|^2-\frac{\e_{k}}{\e_k-\e_{k+1}}\|\ox_{k+1}-\ox_k\|^2.$
    \item[c)] $\left\|\ox_{k+1}-\ox_k\right\|\le\min\left(\frac{\e_k-\e_{k+1}}{\e_{k+1}}\|\ox_{k}\|,\frac{\e_k-\e_{k+1}}{\e_k}\|\ox_{k+1}\|\right).$
\end{itemize}
\end{lemma}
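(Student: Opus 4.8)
The plan is to base everything on the first-order optimality condition for the strongly convex problems together with the monotonicity of $\p f$. Since $\ox_k$ minimizes $f(x)+\frac{\e_k}{2}\|x\|^2$, the optimality condition reads $-\e_k\ox_k\in\p f(\ox_k)$, and likewise $-\e_{k+1}\ox_{k+1}\in\p f(\ox_{k+1})$. Monotonicity of the subdifferential of the convex function $f$ then yields
\[
\<-\e_k\ox_k-(-\e_{k+1}\ox_{k+1}),\ox_k-\ox_{k+1}\>\ge 0.
\]
Writing $d=\ox_{k+1}-\ox_k$ and using $\<\ox_{k+1},d\>=\<\ox_k,d\>+\|d\|^2$, I would rearrange this to $(\e_k-\e_{k+1})\<\ox_k,d\>\ge\e_{k+1}\|d\|^2$, which after dividing by $\e_{k+1}>0$ is exactly the first displayed inequality. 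For the second inequality I would instead substitute $\<\ox_k,d\>=\<\ox_{k+1},d\>-\|d\|^2$ into $(\e_k-\e_{k+1})\<\ox_k,d\>\ge\e_{k+1}\|d\|^2$ and collect terms to reach $(\e_k-\e_{k+1})\<\ox_{k+1},d\>\ge\e_k\|d\|^2$, then divide by $\e_k$.

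Every remaining assertion follows by elementary manipulation of these two master inequalities. For the monotonicity of $(\|\ox_k\|)$ and the sign of $\<\ox_{k+1},\ox_k\>$, the first inequality gives $\<\ox_k,\ox_{k+1}\>\ge\|\ox_k\|^2\ge 0$; combining this with Cauchy--Schwarz $\<\ox_k,\ox_{k+1}\>\le\|\ox_k\|\,\|\ox_{k+1}\|$ forces $\|\ox_k\|\le\|\ox_{k+1}\|$, the case $\|\ox_k\|=0$ being immediate. For part a) I would divide both master inequalities by $\e_k-\e_{k+1}$, add them, and use $\|\ox_{k+1}\|^2-\|\ox_k\|^2=\<d,\ox_{k+1}+\ox_k\>=\<\ox_k,d\>+\<\ox_{k+1},d\>$. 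Part b) splits into its two one-sided estimates: write $\<\ox_{k+1},\ox_k\>=\|\ox_k\|^2+\<\ox_k,d\>$ and insert the lower bound on $\<\ox_k,d\>$ from the first master inequality for the left estimate, and write $\<\ox_{k+1},\ox_k\>=\|\ox_{k+1}\|^2-\<\ox_{k+1},d\>$ with the bound from the second for the right estimate. Part c) is obtained by applying Cauchy--Schwarz $\<\ox_k,d\>\le\|\ox_k\|\,\|d\|$ to the first master inequality, which gives $\|d\|\le\frac{\e_k-\e_{k+1}}{\e_{k+1}}\|\ox_k\|$, doing the analogous thing with the second, and taking the minimum.

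The computations are routine; the one point requiring care is the possibility $\e_k=\e_{k+1}$, where parts a)--c) formally divide by $\e_k-\e_{k+1}$. In that case uniqueness of the minimizer forces $\ox_k=\ox_{k+1}$, hence $d=0$, so all these statements hold trivially (and in the intended application $\e_k=c/k^p$ is strictly decreasing, so the degeneracy does not arise). The only genuinely structural ingredient is the monotonicity of $\p f$ together with the optimality characterization $-\e_k\ox_k\in\p f(\ox_k)$; once these are in place, all the assertions reduce to rewriting a single monotonicity inequality in terms of $d=\ox_{k+1}-\ox_k$ and invoking Cauchy--Schwarz.
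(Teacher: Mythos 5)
Your proposal is correct and follows essentially the same route as the paper: both start from the optimality conditions $-\e_k\ox_k\in\p f(\ox_k)$, apply monotonicity of $\p f$ to obtain the two master inequalities, and then derive a)--c) by the polarization identities and Cauchy--Schwarz. The only (immaterial) differences are that you prove a) by adding the two master inequalities rather than substituting $\<\ox_k,\ox_{k+1}-\ox_k\>=\frac12(\|\ox_{k+1}\|^2-\|\ox_k\|^2-\|\ox_{k+1}-\ox_k\|^2)$ into the first one, and that you explicitly handle the degenerate case $\e_k=\e_{k+1}$, which the paper leaves implicit.
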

\begin{proof} Since $\ox_{k}$ is the unique minimum of the strongly convex function $f_k(x)=f(x)+\frac{\e_k}{2}\|x\|^2,$ obviously one has
\begin{equation}\label{fontos0apen}
\p f_k(\ox_{k})=\p f(\ox_{k})+\e_k\ox_{k}\ni0.
\end{equation}
Hence, we have
$-\e_k\ox_{k}\in \p f(\ox_{k})$ and $-\e_{k+1}\ox_{k+1}\in \p f(\ox_{k+1})$ and by using the monotonicity of $\p f$ we get
$$\left\<-\e_{k+1}\ox_{k+1}+\e_{k}\ox_{k},\ox_{k+1}-\ox_{k}\right\>\ge 0.$$
In other words
$$-\e_{k+1}\<\ox_{k+1}-\ox_{k},\ox_{k+1}-\ox_{k}\>+\left(\e_{k}-\e_{k+1}\right)\<\ox_k,\ox_{k+1}-\ox_{k}\>\ge 0$$
or, equivalently
\begin{equation}\label{fontos1apen}
\frac{\e_{k}-\e_{k+1}}{\e_{k+1}}\<\ox_k,\ox_{k+1}-\ox_{k}\>\ge\|\ox_{k+1}-\ox_{k}\|^2.
\end{equation}
But, $\<\ox_k,\ox_{k+1}-\ox_{k}\>=-\|\ox_{k+1}-\ox_{k}\|^2+\<\ox_{k+1},\ox_{k+1}-\ox_{k}\>$ hence
$$\frac{\e_{k}-\e_{k+1}}{\e_{k+1}}\<\ox_{k+1},\ox_{k+1}-\ox_{k}\>\ge\frac{\e_{k}}{\e_{k+1}}\|\ox_{k+1}-\ox_{k}\|^2.$$
Equivalently, we can write
\begin{equation}\label{fontos2apen}
\frac{\e_{k}-\e_{k+1}}{\e_{k}}\<\ox_{k+1},\ox_{k+1}-\ox_{k}\>\ge\|\ox_{k+1}-\ox_{k}\|^2.
\end{equation}
In order to prove a) note that
$\<\ox_k,\ox_{k+1}-\ox_{k}\>=\frac12(\|\ox_{k+1}\|^2-\|\ox_{k}\|^2-\|\ox_{k+1}-\ox_{k}\|^2),$
hence \eqref{fontos1apen} leads to
\begin{equation}\label{fontos3apen}
\|\ox_{k+1}\|^2-\|\ox_{k}\|^2\ge \frac{\e_{k}+\e_{k+1}}{\e_{k}-\e_{k+1}}\|\ox_{k+1}-\ox_{k}\|^2.
\end{equation}

Observe that b) is actually equivalent to \eqref{fontos1apen} and \eqref{fontos2apen}.

For proving c) we simply use  in \eqref{fontos1apen} and \eqref{fontos2apen} the Cauchy-Schwarz inequality and simplify with $\|\ox_{k+1}-\ox_{k}\|.$

Finally, note that a) implies that the sequence $(\|\ox_k\|)_{k\ge 1}$ is non-decreasing and b) implies that $\<\ox_{k+1},\ox_k\>\ge 0$ for all $k\ge 1.$
\end{proof}

The following result is used in the proofs of our strong convergence results.
\begin{lemma}\label{pkstuff} Let $H>0,\,0<\b$ and for $K_0\in\N,\, K_0>H^{\frac{1}{\b}}$ consider the sequence $\pi_k=\frac{1}{\prod_{i=K_0}^k \left(1-\frac{H}{i^\b}\right)}.$ Then obviously $(\pi_k)$ is a positive non-decreasing sequence and has the following properties.
\begin{itemize}
\item[a)] If $\b\in]0,1[$ then there exists $C_1,C_2>0$ such that  after an index $n_0\in\N$ it holds $$e^{C_1 n^{1-\b}}\le\pi_n\le e^{C_2 n^{1-\b}},\mbox{ for all }n\ge n_0.$$ Further, if $\b=1$ then $\pi_n=\mathcal{O}(n^H)$ as $n\to+\infty$.
\item[b)] If $\b\in]0,1[$ then for all $\g\in\R$ and $n$ big enough, one has $$C_1n^{\g+\b}\pi_n\le\sum_{k=K_0}^n k^\g\pi_k\le C_2n^{\g+\b}\pi_n,\mbox{ for some }C_1,C_2>0.$$
\item[c)] For every nonegative sequence $(a_k)$ one has
$$\sum_{k=K_0+1}^n \pi_k(a_{k}-a_{k-1})\le a_{n}\pi_{n}.$$
\end{itemize}

\end{lemma}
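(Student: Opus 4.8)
The plan is to establish the three parts in order, relying throughout on the elementary identity obtained from the defining product. Since $K_0>H^{1/\b}$ forces $0<1-H/i^\b<1$ for every $i\ge K_0$, the sequence $(\pi_k)$ is positive and non-decreasing, and from $\pi_{k-1}=\pi_k\left(1-\frac{H}{k^\b}\right)$ one reads off the key relation
\[
\pi_k-\pi_{k-1}=\frac{H}{k^\b}\,\pi_k,\qquad k>K_0,
\]
which will drive both parts b) and c).

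For part a) I would pass to logarithms, writing $\ln\pi_n=-\sum_{i=K_0}^n\ln\left(1-\frac{H}{i^\b}\right)$. Using the two-sided estimate $x\le-\ln(1-x)\le\frac{x}{1-x}$ valid for $x\in(0,1)$ with $x=H/i^\b$, one sandwiches $\ln\pi_n$ between $\sum_{i=K_0}^n\frac{H}{i^\b}$ and $\sum_{i=K_0}^n\frac{H}{i^\b-H}$. Both sums have the same asymptotic order, which I would pin down by comparison with $\int x^{-\b}\,dx$: for $\b\in(0,1)$ each behaves like $\frac{H}{1-\b}n^{1-\b}$, so $C_1n^{1-\b}\le\ln\pi_n\le C_2n^{1-\b}$ for suitable constants and $n$ large, and exponentiating yields the claim; for $\b=1$ each sum equals $H\ln n+\mathcal{O}(1)$, whence $\ln\pi_n\le H\ln n+C$ and $\pi_n=\mathcal{O}(n^H)$.

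Part b) is the heart of the matter, and I would prove the two inequalities by induction on $n$, writing $s_n=\sum_{k=K_0}^n k^\g\pi_k$. For the upper bound I assume $s_{n-1}\le C_2(n-1)^{\g+\b}\pi_{n-1}$ and add $n^\g\pi_n$; after substituting $\pi_{n-1}=\pi_n(1-H/n^\b)$ and dividing by $\pi_n$, the inductive step reduces to
\[
n^\g\le C_2\Big[n^{\g+\b}-(n-1)^{\g+\b}+(n-1)^{\g+\b}\tfrac{H}{n^\b}\Big].
\]
The bracket equals $Hn^\g+\mathcal{O}(n^{\g+\b-1})$, and here the exponent $\g+\b-1<\g$ (because $\b<1$) is exactly what makes the $Hn^\g$ term dominant, so that the ratio of $n^\g$ to the bracket tends to $1/H$; thus for $C_2>1/H$ and $n$ large the step closes. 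The lower bound is symmetric, using $C_1<1/H$. The main obstacle is precisely this bookkeeping of lower-order terms, together with fixing an admissible base index and constants so that both inductions run from a common threshold; the hypothesis $\b<1$ is what guarantees $n^{\g+\b-1}=o(n^\g)$ and hence that the estimate is self-consistent.

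Finally, part c) is a clean summation by parts. Taking $f_k=\pi_k$ and $g_k=a_k$ in Abel's identity gives
\[
\sum_{k=K_0+1}^n\pi_k(a_k-a_{k-1})=\pi_na_n-\pi_{K_0}a_{K_0}-\sum_{k=K_0+1}^n a_{k-1}(\pi_k-\pi_{k-1}).
\]
Since $(a_k)$ is nonnegative and $(\pi_k)$ is non-decreasing, both $\pi_{K_0}a_{K_0}$ and the final sum are nonnegative, so the right-hand side is bounded above by $\pi_na_n$, which is exactly the assertion.
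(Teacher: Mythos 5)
Your proof is correct, and in parts a) and b) it takes a genuinely different route from the paper, while part c) coincides with the paper's argument (the paper writes $\pi_k a_{k-1}=\pi_{k-1}a_{k-1}+\frac{H}{k^\b-H}\pi_{k-1}a_{k-1}\ge\pi_{k-1}a_{k-1}$ and telescopes, which is exactly your Abel identity with the nonnegative remainder discarded). For a) the paper applies the Ces\`aro--Stolz theorem to $\ln\pi_n/n^{1-\b}$ (resp.\ $\ln\pi_n/\ln n$ when $\b=1$) and obtains the precise limit $H/(1-\b)$ (resp.\ $H$); your sandwich $x\le-\ln(1-x)\le x/(1-x)$ combined with integral comparison is more elementary and delivers the same two-sided exponential bounds, though without identifying the sharp constant. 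For b) the paper again invokes Ces\`aro--Stolz, after checking that $(n^{\g+\b}\pi_n)$ is increasing and divergent, to show $\sum_{k}k^\g\pi_k\big/\bigl(n^{\g+\b}\pi_n\bigr)\to 1/H$, which is slightly stronger than the two-sided bound claimed; your induction with $C_2>1/H$ and $C_1<1/H$ proves exactly the stated inequalities, and your bookkeeping is sound: the recursion $\pi_{n-1}=\pi_n(1-H/n^\b)$ reduces the step to $n^\g\le C_2\bigl[Hn^\g+\mathcal{O}(n^{\g+\b-1})\bigr]$, the error exponent $\g+\b-1<\g$ is controlled precisely because $\b<1$, and enlarging $C_2$ (resp.\ shrinking $C_1$) to absorb the base case only makes the inductive step easier since the bracket is eventually positive. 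In short, the paper's route yields exact asymptotic constants in one stroke, while yours is self-contained and avoids the discrete l'H\^opital machinery; both are complete.
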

\begin{proof} In case  $\b\in]0,1[$, by applying the Ces\`aro-Stolz theorem, we have $$\lim_{n\to+\infty}\frac{\ln\pi_n}{n^{1-\b}}=\lim_{n\to+\infty}\frac{\ln\frac{\pi_{n+1}}{\pi_n}}{(n+1)^{1-\b}-n^{1-\b}}=\lim_{n\to+\infty}\frac{\frac{H}{(n+1)^\b}\ln\left(1-\frac{H}{(n+1)^\b}\right)^{-\frac{(n+1)^\b}{H}}}{(n+1)^{1-\b}-n^{1-\b}}$$
But $\lim_{n\to+\infty}\frac{\frac{H}{(n+1)^\b}}{(n+1)^{1-\b}-n^{1-\b}}=\frac{H}{1-\b}$ and   $\lim_{n\to+\infty}\ln\left(1-\frac{H}{(n+1)^\b}\right)^{-\frac{(n+1)^\b}{H}}=1$, hence
$$\lim_{n\to+\infty}\frac{\ln\pi_n}{n^{1-\b}}=\frac{H}{1-\b}.$$
In other words, for every $\e>0$ there exists $n_0\in\N$ such that for all $n\ge n_0$ one has
$$e^{\left(\frac{H}{1-\b}-\e\right)n^{1-\b}}\le \pi_n\le e^{\left(\frac{H}{1-\b}+\e\right)n^{1-\b}}$$
and the conclusion follows.

In case  $\b=1$, by applying the Ces\`aro-Stolz theorem, we have
$$\lim_{n\to+\infty}\frac{\ln\pi_n}{\ln n}=\lim_{n\to+\infty}\frac{\frac{H}{n+1}\ln\left(1-\frac{H}{n+1}\right)^{-\frac{n+1}{H}}}{\frac{1}{n}\ln\left(1+\frac{1}{n}\right)^n}=H$$
and the conclusion follows.

b) Note that it is enough to show that $\lim_{n\to+\infty}\frac{\sum_{k=K_0+1}^n k^\g\pi_k}{n^{\g+\b}\pi_n}$ exists and is finite. Observe that  according to a) one has $\lim_{n\to+\infty} n^{\g+\b}\pi_n=+\infty$ for every $\g\in\R.$ Further, for every $\g\in\R$ one has
$\frac{(n+1)^{\g+\b}\pi_{n+1}}{ n^{\g+\b}\pi_n}=\left(1+\frac{1}{n}\right)^{\g+\b}\frac{(n+1)^\b}{(n+1)^\b-H}>1$, hence the sequence $(n^{\g+\b}\pi_n)$ is increasing. Consequently Ces\`aro-Stolz theorem can be applied in order to find the limit $\lim_{n\to+\infty}\frac{\sum_{k=K_0+1}^n k^\g\pi_k}{n^{\g+\b}\pi_n}.$
We have
$$\lim_{n\to+\infty}\frac{\sum_{k=K_0+1}^n k^\g\pi_k}{n^{\g+\b}\pi_n}=\lim_{n\to+\infty}\frac{(n+1)^{\g}\pi_{n+1}}{(n+1)^{\g+\b}\pi_{n+1}-n^{\g+\b}\pi_{n}}=\lim_{n\to+\infty}\frac{(n+1)^{\g}}{(n+1)^{\g+\b}-n^{\g+\b}\frac{\pi_{n}}{\pi_{n+1}}}.$$
Further, $$\lim_{n\to+\infty}\frac{(n+1)^{\g}}{(n+1)^{\g+\b}-n^{\g+\b}\frac{\pi_{n}}{\pi_{n+1}}}=\lim_{n\to+\infty}\frac{1}{\frac{(n+1)^{\g+\b}-n^{\g+\b}}{(n+1)^{\g}}+\frac{Hn^{\g+\b}}{(n+1)^{\g+\b}}}=\frac{1}{H}.
$$

c) We have $\pi_ka_{k-1}=\pi_{k-1}a_{k-1}+\frac{H}{k^\b-H}\pi_{k-1}a_{k-1}$, hence
$$\sum_{k=K_0+1}^n \pi_k(a_{k}-a_{k-1})\le\sum_{k=K_0+1}^n (\pi_ka_{k}-\pi_{k-1}a_{k-1})\le a_{n}\pi_{n}.$$
\end{proof}

{\bf  Conflicts of interests.} We have no conflicts of interest to disclose.

\end{document}